\DeclarePairedDelimiter\floor{\lfloor}{\rfloor}
\makeatletter \@addtoreset{equation}{section} \makeatother
\renewcommand\thetable{\thesection.\@arabic\c@table}
\theoremstyle{plain}
\newtheorem{theorem}{Theorem}[section]
\newtheorem{proposition}[theorem]{Proposition}
\newtheorem{lemma}[theorem]{Lemma}
\theoremstyle{definition}
\newtheorem{remark}[theorem]{Remark}
\newtheorem{definition}[theorem]{Definition}
\newtheorem*{condition}{Condition}
\newcommand{\field}[1]{\mathbb{#1}}
\newcommand{\NN}{\field{N}}
\DeclareSymbolFont{cyrillic}{T2A}{cmr}{m}{n}
\DeclareMathSymbol{\D}{\mathalpha}{cyrillic}{196}
\newcommand{\ro}{\rho}
\newcommand{\N}{\mathbb{N}}
\newcommand{\R}{\mathbb{R}}
\def\I{\ensuremath{{\bf 1}}}
\newcommand\phantomarrow[2]{%
  \setbox0=\hbox{$\displaystyle #1\to$}%
  \hbox to \wd0{%
    $#2\mapstochar
     \cleaders\hbox{$\mkern-1mu\relbar\mkern-3mu$}\hfill
     \mkern-7mu\rightarrow$}%
  \,}
\def\R{\ensuremath{\mathbb R}}
\def\CC{\ensuremath{\mathscr C}}
\def\N{\ensuremath{\mathbb N}}
\def\I{\ensuremath{{\bf 1}}}
\def\e{{\ensuremath{\rm e}}}
\def\B{\ensuremath{\mathcal B}}
\def\M{\ensuremath{\mathcal M}}
\def\C{\ensuremath{\mathcal C}}
\def\AA{\ensuremath{\mathcal A}}
\def\X{\mathcal{X}}
\def\M{\mathcal{M}}
\def\ie{{\em i.e.}, }
\def\cv{\ensuremath{\text {Cor}}}
\newcommand{\dif}{\mathrm{d}}
\author[A. C. M. Freitas]{Ana Cristina Moreira Freitas}
\address{Ana Cristina Moreira Freitas\\ Centro de Matem\'{a}tica \&
Faculdade de Economia da Universidade do Porto\\ Rua Dr. Roberto Frias \\
4200-464 Porto\\ Portugal} \email{\href{mailto:amoreira@fep.up.pt}{amoreira@fep.up.pt}}
\urladdr{\url{http://www.fep.up.pt/docentes/amoreira/}}
\author[J. M. Freitas]{Jorge Milhazes Freitas}
\address{Jorge Milhazes Freitas\\ Centro de Matem\'{a}tica \& Faculdade de Ci\^encias da Universidade do Porto\\ Rua do
Campo Alegre 687\\ 4169-007 Porto\\ Portugal}
\email{\href{mailto:jmfreita@fc.up.pt}{jmfreita@fc.up.pt}}
\urladdr{\url{http://www.fc.up.pt/pessoas/jmfreita/}}
\author[F.B. Rodrigues]{Fagner B. Rodrigues}
\address{Fagner Bernardini Rodrigues\\Instituto de Matem\'atica - Universidade Federal do Rio Grande do Sul\\
Av. Bento Gonçalves, 9500 - Pr\'edio 43-111 - Agronomia\\
Caixa Postal 15080\
91509-900 Porto Alegre - RS - Brasil
} \email{fagnerbernardini@gmail.com }
\author[J.V. Soares]{Jorge Valentim Soares}
\address{Jorge Valentim Soares\\ Centro de Matem\'{a}tica \& Faculdade de Ci\^encias da Universidade do Porto\\ Rua do
Campo Alegre 687\\ 4169-007 Porto\\ Portugal}
\email{\href{mailto:jsoares235@gmail.com}{jsoares235@gmail.com}}
\thanks{All authors were partially supported by FCT projects FAPESP/19805/2014, PTDC/MAT-CAL/3884/2014 and PTDC/MAT-PUR/28177/2017, with national funds, and by CMUP (UID/MAT/00144/2019), which is funded by FCT with national (MCTES) and European structural funds through the programs FEDER, under the partnership agreement PT2020. We thank Vuksan Mijovi\'{c}, Mike Todd and Romain Aimino for helpful comments and suggestions.}
\subjclass[2000]{37A50, 37B20, 60G70}
\begin{document}

\title{Rare events for Cantor target sets }
\date{\today}

\maketitle
\begin{abstract}
We study the existence of limiting laws of rare events corresponding to the entrance of the orbits on certain target sets in the phase space. The limiting laws are obtained when the target sets shrink to a Cantor set of zero Lebesgue measure. We consider both the presence and absence of clustering, which is detected by the Extremal Index, which turns out to be very useful to identify the compatibility between the dynamics and the fractal structure of the limiting Cantor set.  
The computation of the Extremal Index is connected to the box dimension of the intersection between the Cantor set and its iterates.

\end{abstract}
\section{Introduction}

The study of rare events for dynamical systems has experienced a vast development in the last two decades (see the book \cite{LFFF16} and the review paper \cite{S09}) and motivated, in particular, applications to climate dynamics (see for example \cite{SKFG16,MCF17,MCBF18,CFMV18}). The occurrence of rare events is tied to the entrance of the orbit in a sensitive region of the phase space, with small measure, which justifies the use of the word rare. There are two main approaches to the subject: one is through the study of the distribution of the normalised elapsed time that the orbits take to hit or return to  such regions of the phase space, which we will refer to the study of Hitting/Return Times Statistics (HTS/RTS), and the other is through the study of the extremal behaviour (distribution of the maximum) of stochastic processes arising from the system simply by evaluating a given observable $\varphi$ through the orbits of the system.

The two approaches were proved to be equivalent \cite{C01,FFT10,FFT11}, when the points where the observable function $\varphi$ exceeds a high threshold correspond exactly to the sensitive region, which is used for target set for the study of HTS/RTS. Then the underlying idea is that the stochastic process showing no exceedances of a certain high threshold up to time $n$ means that the hitting/return time to the respective target set must be larger than $n$. The limiting laws are obtained when the threshold increases to its maximum value, which means that the target sets shrink to the maximal set, $\mathcal M$,  where the observable function $\varphi$ achieves its global maximum value. 

In the existing literature regarding the study of rare events for dynamical systems (in both approaches), most of the times, the set $\mathcal M$ is reduced to a single point. However, in a few papers, $\mathcal M$ has been chosen to be a finite set of points (\cite{HNT12,AFFR16}), a countable set (\cite{AFFR17}), a one dimensional submanifold, such as the diagonal of product spaces (\cite{CCC09,KL09,FGGV18}), but has always been taken with a regular geometric structure.
The exception is the paper \cite{MP16}, which served as motivation for the present work, where the authors consider the situation of fractal landscapes, with $\mathcal M$ taken as a Cantor set. They conjectured that the same distributional limits observed when $\mathcal M$ was a singular point should apply for such more intricate maximal sets. Their contribution can be described, in their own words, as experimental mathematics: while the definitions of the objects under examination, as well as the form of the conjectured limiting laws, were complete and rigorous and, most of the times, the value of various constants involved were obtained from explicit computations, they did not provide proofs of the conjectured results, which were supported by numerical simulation studies. Their study also revealed the importance played by the Minkowski dimension and Minkowski content in the choice of the normalising sequences in order to recover the classical distributional limits.

The motivation to use maximal sets with a finer geometrical structure comes from the possibility of applications to real life situations when one has many variables and the sensitive regions of the phase space are described as fractal landscapes. Cases such as mine swiping, the movement of air masses, road traffic, network communications, structural safety, stock market, where one is particularly worried with the occurrence of certain critical configurations that correspond to sensitive regions with a complex structure, are very common in the real world. Therefore, understanding the extremal dynamics of simpler lower dimensional models, but which still capture the landscape fractal complexity of the critical regions, is of the utmost importance.

Hence, in this work we assume that $\M$ is a Cantor set and prove that the conjectured limit behaviour observed in \cite{MP16} holds true for uniformly expanding dynamics. To our knowledge, this is the first time that rare events limiting laws are proved analytically for a limiting target set (or maximal set) $\mathcal M$ with a fractal geometry. Moreover, we study the possibility of occurring clustering of extreme events for such maximal sets. We remark that, in all the examples  considered in \cite{MP16}, with fractal maximal sets with Hausdorff dimension strictly larger than 0, clustering was not detected. Here, not only do we provide examples which show that clustering can still occur in such situations, as we explain the mechanism responsible for the appearance of clustering, by using tools from fractal geometry. % such as box dimension.

As shown in \cite{FFT12,AFFR16}, clustering of rare events is connected with the recurrence properties of $\mathcal M$ by the system's dynamics. Of course that, when $\mathcal M$ is reduced to a single point, then clustering is related to the periodicity of that point. In fact, in \cite{FFT12,K12,FP12,AFV15,FFTV16}, a dichotomy was proved for uniformly and non-uniformly hyperbolic systems: either the single point of $\mathcal M$ is periodic and we have clustering, or is non-periodic and we have the absence of clustering. When $\mathcal M$ has finitely many or countably many points, as shown in \cite{AFFR16,AFFR17}, then either the orbits of those points collide with $\mathcal M$ creating  clustering or they do not hit $\M$ and in that case, as observed also in \cite{HNT12}, there is no clustering. This explains the detection of clustering in the last example of \cite{MP16}, where $\mathcal M$ was a countable set of points.

The presence of clustering is detected by the Extremal Index (EI), which is a parameter that takes values between 0 and 1 and appears as an exponent in the usual exponential limiting law. When there is no clustering the EI is equal to 1, while the presence of clustering leads to an EI less than 1. The more intense is the clustering the smaller is the EI.

As seen in \cite{FFT12,AFFR16}, for a nice discrete time system $T:\X\to\X$, when $\M$ is finite then an EI less than 1 implies that the orbits of the points of $\M$ hit the maximal set itself, \ie there exists some $q\in\N$ such that $T^{-q}(\M)\cap\M\neq\emptyset$. When dealing with an infinite but countable extremal set $\M$, typically, a fast recurrence from $\M$ to itself produces an EI less than 1, as in the finite case. Nevertheless, in some special cases like \cite[Example 4.7]{AFFR17}, a very slow recurrence of the orbits of the maximal points to $\M$ may turn the clustering negligible so that in the limit the EI is still 1. We will see that when $\M$ is uncountable the situation is more complex and the value of the EI is linked to the finer geometrical properties of $\M$, namely, to its fractal dimension and thickness.

We will see (Sections~\ref{sec:statement-of-results} and \ref{sec:EI-indicator}) that one may have a large and fast recurrence of the orbits of the points of $\M$ to itself, \ie the set $T^{-q}(\M)\cap\M$ may be even infinite for small $q$, and yet the EI is still 1. In the context of Theorems~\ref{thm:2x_mod1} and \ref{thm:3kx_mod1}, the EI will only be less than 1 if one of the intersections $T^{-q}(\M)\cap\M$, for $q\in\N$, is relevant in the sense that its box dimension is equal to the box dimension of $\M$, otherwise we will always get an EI equal to 1. Hence, the EI indicates how the dynamics of $T$ is or is not compatible with the geometric structure of $\M$.

We will illustrate this behaviour analytically with simple models and dynamics, namely, in Section~\ref{sec:statement-of-results}, the maximal set $\M$ will be the ternary Cantor set and $T$ will be a uniformly expanding map of the form $mx\mod 1$, with $m\in\N$. This simplification allows to compute concrete estimates for the box dimension of the intersections mentioned earlier and exact values for the EI. In the case of presence of clustering created by a clear compatibility between the dynamics and the self-similarity structure of the maximal set, we will consider more general Cantor sets (see Section~\ref{se:3x_mod1}). We remark that although we work with simple models, they capture the essence of the limiting behaviour of the statistics of rare events dynamics and, in fact, we believe that the spirit of our findings should prevail in more irregular situations, as the numerical simulation study performed in Section~ \ref{sec:EI-indicator} suggests.

As seen in \cite{LFTV12}, the authors considered $\M$ reduced to a single point chosen in the support of a dynamical attractor and have shown that extremes can be thought of as geometric indicators of the local properties of the attractor. Here, $\M$ is an intricate, much more general set, which means that the extremes, instead of a local information provide a global one. In fact, we believe that an interesting byproduct of our results is that the EI could be used as an indicator of the compatibility of a certain dynamics with the geometric self-similarity structure of $\M$. Namely, in the cases considered, we obtained that the EI is always 1, except for the cases when $m=3^k$ for some $k\in\N$, which are precisely the maps that have $\mathcal M$ as an invariant set.

The dimension of the intersection of fractal sets such as Cantor sets is an important problem popularised by  some of Furstenberg's conjectures, which in particular state that ``expansions in multiplicatively independent bases (such as 2 and 3) should have no common structure''. We refer to \cite{S16} and references therein. In order to illustrate the potential of the EI as an indicator for the relevance of the intersection of fractal sets and the compatibility of a certain dynamics with the self-similar structure of a Cantor set,  we used an estimator of the EI, introduced in  \cite{H93a}, and carried out  a numerical study in order to demonstrate its performance by comparing with our theoretical estimates.

The paper is structured as follows. In Section \ref{sec:EVL}, we introduce the framework regarding the study of rare events for dynamical systems and state general results providing conditions in order to obtain the existence of limiting laws. In Section~\ref{sec:statement-of-results}, we introduce the observables maximised on Cantor sets (the ternary Cantor set, to be more precise), define the dynamical models and state the main results of the paper. In Section~\ref{se:3x_mod1}, we state a general theorem establishing the existence of a limiting law, in the presence of clustering, for general dynamically defined Cantor sets and compatible dynamics, which allows to prove one of the main theorems stated in the previous section. In Section~\ref{sec:absence-clustering}, we prove the other main theorem stated in Section~\ref{sec:statement-of-results} regarding the existence of a limiting extreme value law, in the absence of clustering. This is the more elaborate part and includes a brief review of several tools of fractal geometry. In Section~\ref{sec:EI-indicator}, we present a numerical simulation study to illustrate the suitability of the use of the EI as an indicator of the compatibility between the dynamics and the geometrical fractal structure of the maximal set.     

\section{Laws of extreme events}
\label{sec:EVL}

Consider a discrete dynamical system $(\mathcal{X},\mathcal{B},T,\mu)$, where $\mathcal{X}$ is a compact set (an interval, in our case), $\mathcal{B}$ is the respective Borel sigma algebra, $T:\mathcal{X} \to \mathcal{X}$ is a measurable map, and $\mu$ is an invariant measure with respect to $T$. We will follow the Extreme Value approach and, therefore, we consider an observable function $\varphi:\mathcal{X}\to \R^{+}\cup\{\infty\}$ and define the stochastic process, $(X_n)_{n\in\N}$, in the following way,
\begin{equation}
\label{eq:stochastic_process}
  X_n(x)=\varphi\circ T^{n}(x).
\end{equation}
Note that the invariance of $\mu$ implies the stationarity of $(X_n)_{n\in\N}$. We are particularly interested in the extremal behaviour of such stochastic processes, which is tied to the recurrence properties of the set of global maxima of $\varphi$, as was proved in \cite{FFT10,FFT11}. We denote this set of global maximal points as $\mathcal M$, \ie we assume that there exists $Z=\max_{x\in\X}\varphi(x)$, where we allow $Z=+\infty$, and
$$\M=\{x\in \X:\; \varphi(x)=Z\}.$$
In what follows, $\zeta$ will always denote a generic point of $\mathcal M$. In this paper, most of the times, $\mathcal M=\mathcal C$, where $\mathcal C$ denotes the usual ternary Cantor set.

From the stochastic process $(X_n)_{n\in \N}$, we define the process of partial maxima $(M_n)_{n\in\N}$ whose limiting distribution we want to analyse:
\begin{equation}
\label{eq:m_n}
M_n=\text{max}\{X_0,\ldots,X_{n-1}\}.
\end{equation}

In order to study the extremal behaviour of $(X_n)_{n\in\N}$, we consider the level sets $\{X_j>u\}$, \ie the exceedances of a high threshold $u$, which correspond to the target sets in the HTS/RTS approach, and try to obtain a limit for the probability of not observing any exceedance up to a certain moment of time $m$, which depends on the level $u$. More precisely, we want to estimate $\mu(M_m\leq u)$ as $u\to Z$ or, in other words, when the target sets $\{X_j>u\}$ shrink to $\M$. In order to obtain a non degenerate limit, the dependence of $m$ on $u$ must be well tuned.

When $\mu(X_j>u)$ as a function of $u$ is not smooth, as happens here, this tuning must be performed with some care and we will use the normalisation introduced in \cite{FFT11} to deal with similar cases. Namely, we consider sequences $(w_n)_{n\in\N}$ and $(u_n)_{n\in\N}$ such that
\begin{equation}
\label{eq:w_n}
w_n\mu(X_0>u_n)\rightarrow \tau \text{  as  } n\to\infty\text{ for some $\tau\geq0$}.
\end{equation}
Then our main goal is to find some non-degenerate distribution function $H$ supported on $\R^+$ such that
\begin{equation*}
\lim_{n\to\infty}\mu(M_{w_n}\leq u_n)\rightarrow 1-H(\tau).
\end{equation*}
In \cite{FFT11} this type of distributional limit was called cylinder Extreme Value Law (EVL). When $\M$ is  a finite or countable set of points and $T$ is either a uniformly expanding map or admits an hyperbolic first return time induced map, then, as seen for example in \cite{FFT12, K12, HNT12, AFV15, FFTV16, AFFR16,AFFR17}, we have that
\begin{equation}
\label{eq:EVL}
\lim_{n\to\infty}\mu(M_{w_n}\leq u_n)\rightarrow \e^{-\theta\tau},
\end{equation}
where $0\leq\theta\leq1$. When such a limit exists, then $\theta$ is called the \emph{Extremal Index}. The EI is associated to the recurrence properties of $\M$. In fact, when $\M$ is reduced to a single point, as seen in \cite{FFT12, K12, AFV15,FFTV16}, a full dichotomy holds: either $\M=\{\zeta\}$ is non-recurrent, which translates to $\zeta$ being non-periodic, and then there is no clustering of exceedances and $\theta=1$, or $\M=\{\zeta\}$ is recurrent, \ie $\zeta$ is a periodic point, which is responsible for the appearance of clustering and an EI less than 1 (if the map is differentiable along the orbit of $\zeta$ and the invariant measure is absolutely continuous w.r.t. Lebesgue measure, we have $\theta=\frac1{|DT^p(\zeta)|}$).

\subsection{Existence of limiting laws}
\label{subsec:existence-EVL}
The main purpose of this subsection is to provide general conditions which allow us to prove the existence of a limiting law as stated in \eqref{eq:EVL}. Let $(u_n)_{n\in\N}$ and $(w_n)_{n\in\N}$ be as in \eqref{eq:w_n}.
Consider a sequence $(q_n)_{n\in\N}$ such that
\begin{equation}
\label{eq:qn-def}
\lim_{n\rightarrow\infty}q_n=\infty\qquad\mbox{and}\qquad \lim_{n\rightarrow\infty}\frac{q_n}{w_{n}}=0.
\end{equation}
Let $T^{-i}$ denote the $i$-th preimage by  the map $T$.
Fixing $u\in\R$ and $q\in\N$, we define the following events,
\begin{align}
\label{eq:U-A-def}
U(u)&:= \{X_0>u\},\nonumber\\
\AA_{q}(u)&:=U(u)\cap\bigcap_{i=1}^{q}T^{-i}(U(u)^c)=\{X_0>u, X_1\leq u, \ldots, X_{q}\leq u\}.
\end{align}
While the event $U(u)$ corresponds to the occurrence of an exceedance, the event  $\AA_{q}(u)$ corresponds to the occurrence of an exceedance which terminates a cluster of exceedances, \ie if $T^{-j}(\AA_{q}(u))$ occurs, then the next exceedance after the one observed at time $j$ must belong to a new and different cluster of exceedances. In particular, $q$ can be though as the maximal waiting time between two exceedences within the same cluster.

Let $B\in\B$ be an event. For $s,\ell\in \N$, we define,
\begin{equation*}
\mathscr W_{s,\ell}(B)=\bigcap_{i= s}^{s+\ell-1} T^{-i}(B^c).
\end{equation*}
Observe that 
$
\mathscr W_{0,n}(U(u_n))=\{M_n\leq u_n\}.
$
For each $n\in\N$, set $U_n:=U(u_n)$, $\AA_{q_n,n}:=\AA_{q_n}(u_n)$ and
\begin{equation}
\label{def:thetan}
\theta_n:=\frac{\mu\left(\AA_{q_n,n}\right)}{\mu(U_n)}.
\end{equation}
We will see that $\theta_n$ provides a good estimate for the EI. In fact, the EI, $\theta$, will be such that
\begin{equation}
\label{eq:OBriens-formula}
\theta=\lim_{n\to\infty}\theta_n.
\end{equation}
We will refer to \eqref{eq:OBriens-formula} as O'Brien's formula to compute the EI (see \cite{O87}).

We start by stating a condition that requires some sort of asymptotic independence of events when the time gap between them increases.
\begin{condition}[$\D_{q_n}(u_n, w_{n})$]\label{cond:D} We say that $\D_{q_n}(u_n, w_{n})$ holds for the stochastic process $(X_n)_{n\in\N}$ if for every  $\ell,t,n\in\N$
\begin{equation}\label{eq:D1}
\left|\mu\left( \AA_{q_n,n}\cap
 \mathscr W_{t,\ell}\left( \AA_{q_n,n}\right) \right)-\mu\left( \AA_{q_n,n}\right)
  \mu\left(\mathscr W_{0,\ell}\left( \AA_{q_n,n}\right)\right)\right|\leq \gamma(n,t),
\end{equation}
where $\gamma(n,t)$ is decreasing in $t$ for each $n$ and there exists a sequence $(t_n)_{n\in\N}$ such that $t_n=o(w_n)$ and
$w_{n}\gamma(n,t_n)\to0$ when $n\rightarrow\infty$.
\end{condition}
The next condition forbids the concentration of clusters of exceedances.
\noindent Consider the sequence $(t_n)_{n\in\N}$ given by condition  $\D_{q_n}(u_n, w_{n})$ and let $(k_n)_{n\in\N}$ be another sequence of integers such that
\begin{equation}
\label{eq:kn-sequence}
k_n\to\infty\quad \mbox{and}\quad  k_n t_n = o(w_n).
\end{equation}

\begin{condition}[$\D'_{q_n}(u_n,w_{n})$]\label{cond:D'q} We say that $\D'_{q_n}(u_n,w_{n})$
holds for the sequence $(X_n)_{n\in\N}$ if there exists a sequence $(k_n)_{n\in\N}$ satisfying \eqref{eq:kn-sequence} such that
\begin{equation}
\label{eq:D'rho-un}
\lim_{n\rightarrow\infty}\,w_{n}\sum_{j=q_n+1}^{\lfloor w_{n}/k_n\rfloor-1}\mu\left(  \AA_{q_n,n}\cap T^{-j}\left( \AA_{q_n,n}\right)
\right)=0.
\end{equation}
\end{condition}
We can now state a general result establishing the existence of a limiting extreme value law.
\begin{theorem}
\label{th:exists_evl}
Let $(X_n)_{n\in\N}$ be a stochastic process constructed as in \eqref{eq:stochastic_process}. Consider the sequences $(u_{n})_{n\in\mathbb{N}}$ and $(w_{n})_{n\in\mathbb{N}}$ satisfying \eqref{eq:w_n} for some $\tau\geq0$. Assume that conditions $\D_{q_n}(u_n, w_{n})$  and $\D'_{q_n}(u_n,w_{n})$ hold  for some $q_n\in\mathbb{N}_{0}$ satisfying \eqref{eq:qn-def}.
Moreover, assume that the sequence $(\theta_n)_{n\in\N}$ defined in \eqref{def:thetan} converges to some $0\leq\theta\leq 1$, \ie $\theta=\lim_{n\to\infty}\theta_n$. Then,
$$\lim_{n\to +\infty}\mu(M_{w_n}\leq u_n)=e^{-\theta\tau}.$$
\end{theorem}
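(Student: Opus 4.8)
The plan is to establish the limiting law via the standard two-step Poisson-approximation scheme adapted to the cylinder EVL framework of \cite{FFT11}, using the events $\AA_{q_n,n}$ (cluster-terminating exceedances) as the building blocks rather than the raw exceedances $U_n$. The key structural identity I would exploit first is the decomposition relating $\{M_{w_n}\leq u_n\}=\mathscr W_{0,w_n}(U_n)$ to the non-occurrence of any $\AA_{q_n,n}$ up to time $w_n$. Specifically, since $\AA_{q_n,n}$ marks the last exceedance of a cluster, one shows that $\mathscr W_{0,w_n}(U_n)$ and $\mathscr W_{0,w_n}(\AA_{q_n,n})$ differ by an event whose measure is controlled by the ``edge effect'' near time $w_n$, i.e.\ by roughly $q_n\mu(U_n)$; because $q_n=o(w_n)$ and $w_n\mu(U_n)\to\tau$, this discrepancy tends to $0$. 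Hence it suffices to compute $\lim_{n\to\infty}\mu(\mathscr W_{0,w_n}(\AA_{q_n,n}))$.

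The second step is to show that this latter probability converges to $e^{-\theta\tau}$. Here I would proceed by blocking: partition $\{0,1,\dots,w_n-1\}$ into $k_n$ blocks each of length $\lfloor w_n/k_n\rfloor$, separated by gaps of length $t_n$ (the mixing time supplied by $\D_{q_n}$). Condition $\D_{q_n}(u_n,w_n)$ lets me approximate $\mu(\mathscr W_{0,w_n}(\AA_{q_n,n}))$ by the product $\bigl(\mu(\mathscr W_{0,\lfloor w_n/k_n\rfloor}(\AA_{q_n,n}))\bigr)^{k_n}$ up to an error bounded by $k_n$ times $\gamma(n,t_n)$ (plus the gap contributions), which vanishes after multiplication by the relevant factors thanks to $w_n\gamma(n,t_n)\to 0$ and $k_nt_n=o(w_n)$. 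Within a single block, condition $\D'_{q_n}(u_n,w_n)$ controls the second-order term in the inclusion-exclusion expansion, guaranteeing that clusters do not concentrate, so that $\mu(\mathscr W_{0,\lfloor w_n/k_n\rfloor}(\AA_{q_n,n}))=1-\tfrac{1}{k_n}\lfloor w_n/k_n\rfloor\,\mu(\AA_{q_n,n})(1+o(1))$ to the required order. Taking the $k_n$-th power and using $1-x\approx e^{-x}$ then yields $e^{-\lim w_n\mu(\AA_{q_n,n})}$.

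Finally I would identify the exponent. Writing $w_n\mu(\AA_{q_n,n})=\theta_n\cdot w_n\mu(U_n)$ by the definition \eqref{def:thetan} of $\theta_n$, and invoking both the hypothesis $\theta_n\to\theta$ and the normalisation $w_n\mu(U_n)\to\tau$ from \eqref{eq:w_n}, gives $w_n\mu(\AA_{q_n,n})\to\theta\tau$, so the limit is precisely $e^{-\theta\tau}$.

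I expect the main obstacle to be the second step, and in particular the careful bookkeeping needed to make the inclusion-exclusion/blocking argument rigorous: one must verify that the first-order term $\sum_{j}\mu(\AA_{q_n,n}\cap T^{-j}(\AA_{q_n,n}))$ over indices $j\le q_n$ is absorbed correctly (these close-range overlaps are excluded by the definition of $\AA_{q_n,n}$, which is exactly why cluster-terminating events are the right objects), while the medium-range overlaps $q_n<j<\lfloor w_n/k_n\rfloor$ are killed by $\D'_{q_n}(u_n,w_n)$ and the long-range ones by $\D_{q_n}(u_n,w_n)$. Coordinating the three scales $q_n\ll t_n\ll w_n/k_n\ll w_n$ so that every error term vanishes simultaneously is the delicate part; the rest is the routine exponential approximation. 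This is a known argument in the EVL literature (cf.\ \cite{FFT11,FFT12}), so I would lean on those techniques while adapting the error estimates to the present normalisation.
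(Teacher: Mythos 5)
Your proposal is correct and takes essentially the same route as the paper, whose proof is simply an ``easy adjustment'' of \cite[Corollary~4.1.7]{LFFF16} --- precisely the blocking/Poisson-approximation scheme you describe: trade $\{M_{w_n}\leq u_n\}=\mathscr W_{0,w_n}(U_n)$ for $\mathscr W_{0,w_n}(\AA_{q_n,n})$ at a cost $O(q_n\mu(U_n))=o(1)$, decouple $k_n$ blocks using $\D_{q_n}(u_n,w_n)$, kill the medium-range within-block overlaps using $\D'_{q_n}(u_n,w_n)$, and identify the exponent via $w_n\mu(\AA_{q_n,n})=\theta_n\, w_n\mu(U_n)\to\theta\tau$. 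The only blemish is a stray factor $\tfrac{1}{k_n}$ in your block estimate, which should read $\mu\bigl(\mathscr W_{0,\lfloor w_n/k_n\rfloor}(\AA_{q_n,n})\bigr)=1-\lfloor w_n/k_n\rfloor\,\mu(\AA_{q_n,n})(1+o(1))$; since your final $k_n$-th power and limiting exponent $e^{-\theta\tau}$ match the corrected expression, this is evidently a typo rather than a gap.
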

The proof of this theorem follows from an easy adjustment of the proof of \cite[Corollary~4.1.7]{LFFF16}.

\subsection{Applications to systems with loss of memory}
One of the main advantages of the previous conditions when compared with the usual ones from the classical Extreme Value Theory is that $\D_{q_n}(u_n, w_{n})$ is easily checked for systems with nice decay of correlations, while the  classical conditions of the kind, similar to Leadbetter's $D(u_n)$ condition, require a uniform mixing which is very difficult to verify even for hyperbolic systems.

\label{subsec:memory-loss}
\begin{definition}[Decay of correlations]
\label{def:dc}
Let \( \mathcal C_{1}, \mathcal C_{2} \) denote Banach spaces of real valued measurable functions defined on \( \X \).
We denote the \emph{correlation} of non-zero functions $\phi\in \mathcal C_{1}$ and  \( \psi\in \mathcal C_{2} \) with respect to a measure $\mu$ as
\[
\cv_\mu(\phi,\psi,n):=\frac{1}{\|\phi\|_{\mathcal C_{1}}\|\psi\|_{\mathcal C_{2}}}
\left|\int \phi\, (\psi\circ T^n)\, \dif\mu-\int  \phi\, \dif\mu\int
\psi\, \dif\mu\right|.
\]
We say that the dynamical sytem $(\mathcal{X},\mathcal{B},T,\mu)$ has \emph{decay
of correlations}, with respect to the measure $\mu$, for observables in $\mathcal C_1$ \emph{against}
observables in $\mathcal C_2$ if there exists a rate function $\rho:\N\to \R$, with $$\lim_{n\to\infty}\rho(n)=0,$$ such that, for every $\phi\in\mathcal C_1$ and every
$\psi\in\mathcal C_2$, we have
 $$\cv_\mu(\phi,\psi,n)\leq \rho(n).
 $$
\end{definition}
The systems we will work with have decay of correlations of functions of Bounded Variation, which we define below, against observables in $L^1(\mu)$. \begin{definition}
\label{def:variation}
Given a potential $\psi:I\to \R$ on an interval $I$, the \emph{variation} of $\psi$ is defined as
$${\rm Var}(\psi):=\sup\left\{\sum_{i=0}^{n-1} |\psi(x_{i+1})-\psi(x_i)|\right\},$$
where the supremum is taken over all finite ordered sequences $(x_i)_{i=0}^n\subset I$.
\end{definition}

We use the norm $\|\psi\|_{BV}= \sup|\psi|+{\rm Var}(\psi)$, which makes the space of functions of Bounded Variation, $BV:=\left\{\psi:I\to \R:\|\psi\|_{BV}<\infty\right\}$, into a Banach space.

We will see that conditions $\D_{q_n}(u_n,w_{n})$ and $\D'_{q_n}(u_n,w_{n})$ follow from the above mentioned type of decay of correlations of the underlying dynamical system.
\begin{theorem}
\label{thm:main}
Let $(\mathcal{X},\mathcal{B},T,\mu)$ be a dynamical system and consider an observable $\varphi$ achieving a global maximum on a set $\mathcal M$.
Let $(X_n)_{n\in\N}$ be the stochastic process given by \eqref{eq:stochastic_process} and consider $(u_n)_{n\in\N}$, $(w_n)_{n\in\N}$  and $(q_n)_{n\in\N}$ as sequences such that \eqref{eq:w_n} and \eqref{eq:qn-def} hold. If the system has decay of correlations of observables in $\C_1$ against observables in $L^1(\mu)$ and if
\begin{enumerate}
\item $\displaystyle \lim_{n\to\infty}\|\I_{\AA_{q_n,n}}\|_{\mathcal C_1}w_{n}\rho({t_n})=0$, for some sequence $(t_n)_{n\in\N}$ such that $t_n=o(w_n)$
\item $\displaystyle \lim_{n\to\infty}\|\I_{U_n}\|_{\mathcal C_1}\sum_{j=q_{n}}^\infty \rho(j)=0$
\end{enumerate}
and if the sequence $(\theta_n)_{n\in\N}$ defined in \eqref{def:thetan} converges to some $0\leq\theta\leq 1$, then conditions $\D_{q_n}(u_n,w_n)$ and $\D'_{q_n}(u_n,w_n)$ are satisfied and
$$
\lim_{n\to\infty}\mu(M_{w_n}\leq u_n)=\e^{-\theta\tau}.
$$
\end{theorem}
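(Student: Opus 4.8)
The plan is to read the conclusion off the general Theorem~\ref{th:exists_evl}. Since $(\theta_n)$ is assumed to converge to $\theta$, it suffices to show that the decay of correlations hypothesis, together with the summability assumptions (1) and (2), forces the two abstract conditions $\D_{q_n}(u_n,w_n)$ and $\D'_{q_n}(u_n,w_n)$ to hold; assumption (1) will give the former and assumption (2) the latter. Throughout I may assume, replacing $\rho$ by $t\mapsto\sup_{s\ge t}\rho(s)$ if necessary, that the rate $\rho$ is non-increasing, as is the case in all the intended applications.

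To obtain $\D_{q_n}(u_n,w_n)$ I would first record the shift identity $\mathscr W_{t,\ell}(\AA_{q_n,n})=T^{-t}\bigl(\mathscr W_{0,\ell}(\AA_{q_n,n})\bigr)$, so that $\I_{\mathscr W_{t,\ell}(\AA_{q_n,n})}=\I_{\mathscr W_{0,\ell}(\AA_{q_n,n})}\circ T^{t}$. The left-hand side of \eqref{eq:D1} is then exactly
\[
\Bigl|\int \I_{\AA_{q_n,n}}\,\bigl(\I_{\mathscr W_{0,\ell}(\AA_{q_n,n})}\circ T^{t}\bigr)\,\dif\mu-\int \I_{\AA_{q_n,n}}\,\dif\mu\int \I_{\mathscr W_{0,\ell}(\AA_{q_n,n})}\,\dif\mu\Bigr|,
\]
i.e.\ the (unnormalised) correlation of $\phi=\I_{\AA_{q_n,n}}\in\C_1$ against $\psi=\I_{\mathscr W_{0,\ell}(\AA_{q_n,n})}\in L^1(\mu)$ at time $t$. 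By the assumed decay of correlations this is at most $\|\I_{\AA_{q_n,n}}\|_{\C_1}\,\|\psi\|_{L^1(\mu)}\,\rho(t)\le \|\I_{\AA_{q_n,n}}\|_{\C_1}\rho(t)=:\gamma(n,t)$, where I used $\|\psi\|_{L^1(\mu)}\le 1$ since $\mu$ is a probability measure. As $\gamma(n,t)$ is non-increasing in $t$, the only remaining requirement of $\D_{q_n}(u_n,w_n)$ is a sequence $t_n=o(w_n)$ with $w_n\gamma(n,t_n)\to0$, which is precisely assumption (1).

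For $\D'_{q_n}(u_n,w_n)$ I would first fix a sequence $(k_n)$ with $k_n\to\infty$ and $k_n t_n=o(w_n)$, which exists because $t_n=o(w_n)$ (for instance $k_n=\lfloor\sqrt{w_n/t_n}\rfloor$); this realises \eqref{eq:kn-sequence}. For $q_n<j\le\lfloor w_n/k_n\rfloor$ I would use $\AA_{q_n,n}\subseteq U_n$ to write $\mu\bigl(\AA_{q_n,n}\cap T^{-j}(\AA_{q_n,n})\bigr)\le\mu\bigl(U_n\cap T^{-j}(\AA_{q_n,n})\bigr)$ and then apply decay of correlations with $\phi=\I_{U_n}\in\C_1$ and $\psi=\I_{\AA_{q_n,n}}\in L^1(\mu)$ at time $j$, giving
\[
\mu\bigl(\AA_{q_n,n}\cap T^{-j}(\AA_{q_n,n})\bigr)\le \mu(U_n)\mu(\AA_{q_n,n})+\|\I_{U_n}\|_{\C_1}\,\mu(\AA_{q_n,n})\,\rho(j).
\]
Summing over the at most $w_n/k_n$ admissible $j$ and multiplying by $w_n$ splits \eqref{eq:D'rho-un} into two pieces. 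The first is bounded by $\tfrac{1}{k_n}\bigl(w_n\mu(U_n)\bigr)\bigl(w_n\mu(\AA_{q_n,n})\bigr)$; since $w_n\mu(U_n)\to\tau$ by \eqref{eq:w_n} and $w_n\mu(\AA_{q_n,n})=\theta_n\,w_n\mu(U_n)\to\theta\tau$ by \eqref{def:thetan}, this tends to $0$ because $k_n\to\infty$. The second is bounded, using $\mu(\AA_{q_n,n})\le\mu(U_n)$ and extending the range of summation, by $\bigl(w_n\mu(U_n)\bigr)\,\|\I_{U_n}\|_{\C_1}\sum_{j=q_n}^{\infty}\rho(j)$, which tends to $0$ by assumption (2). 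Hence \eqref{eq:D'rho-un} holds.

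With both conditions verified and $(\theta_n)$ convergent by hypothesis, Theorem~\ref{th:exists_evl} yields $\lim_{n}\mu(M_{w_n}\le u_n)=\e^{-\theta\tau}$. The steps are conceptually standard adaptations of the arguments behind Theorem~\ref{th:exists_evl}; the parts needing the most care are the rewriting $\mathscr W_{t,\ell}=T^{-t}\mathscr W_{0,\ell}$ that recasts \eqref{eq:D1} as a genuine correlation, and, above all, the two–scale bookkeeping in $\D'_{q_n}(u_n,w_n)$: one must choose $t_n$ (from (1)) and $k_n$ (with $k_n\to\infty$ and $k_nt_n=o(w_n)$) simultaneously so that the quasi-independent contribution is killed by the factor $1/k_n$ while the correlation contribution is killed by (2). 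This interplay, rather than any single estimate, is the main obstacle.
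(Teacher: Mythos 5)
Your proposal is correct and follows essentially the same route as the paper: condition $\D_{q_n}(u_n,w_n)$ is derived from hypothesis (1) by recasting \eqref{eq:D1} as a correlation of $\I_{\AA_{q_n,n}}$ against $\I_{\mathscr W_{0,\ell}(\AA_{q_n,n})}\circ T^t$, and condition $\D'_{q_n}(u_n,w_n)$ from hypothesis (2) via the same split into a quasi-independent term killed by $1/k_n$ and a correlation term killed by the tail sum, before invoking Theorem~\ref{th:exists_evl}. The only differences are cosmetic refinements on your part (the paper bounds $\mu(\AA_{q_n,n}\cap T^{-j}(\AA_{q_n,n}))$ by $\mu(U_n\cap T^{-j}(U_n))$ with $\phi=\psi=\I_{U_n}$, while you keep one factor as $\AA_{q_n,n}$; you also monotonize $\rho$ and exhibit an explicit admissible $k_n$, details the paper leaves implicit).
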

\begin{remark}
\label{rem:conditions}
Note that under the assumption of summable decay of correlations against $L^1$ then hypothesis (1) implies $\D_{q_n}(u_n,w_n)$, while hypothesis (2) implies $\D'_{q_n}(u_n,w_n)$.
\end{remark}

\begin{proof}
By Theorem \ref{th:exists_evl}, we only need to check that the stochastic process $(X_n)_{n\in\N}$ satisfies conditions $\D_{q_{n}}(u_n,w_{n})$ and $\D'_{q_n}(u_{n},w_{n})$.\\
Consider $\phi=\I_{ \AA_{{q_n,n}}}$ and  $\psi=\I_{ \mathscr{W}_{t,\ell}(\AA_{q_n,n})}$ in Definition \ref{def:dc}. Then, there exists $C>0$, such that, for any positive numbers $\ell$ and $t$, we have
\begin{align*}
 |\mu(\AA_{q_n,n}\cap \mathscr{W}_{t,\ell}(\AA_{q_n,n}))
                &-\mu(\AA_{q_n,n})\mu(\mathscr{W}_{0,\ell}(\AA_{q_n,n}))| \\
                &= \left|\int_{\mathcal{X}}\I_{ \AA_{q_n,n}}\cdot (\I_{ \mathscr{W}_{0,\ell}(\AA_{q_n,n})}\circ T^t)d\mu-\int_{\mathcal{X}}\I_{ \AA_{q_n,n}}d\mu \int_{\mathcal{X}}\I_{ \mathscr{W}_{0,\ell}(\AA_{q_n,n})}d\mu \right| \\
                &\leq C\|\I_{\AA_{q_n,n}}\|_{\mathcal C_1}\rho(t).
\end{align*}
Condition $\D_{q_n}(u_n,w_{n})$ follows if there exists a sequence $(t_n)_{n\in\NN}$ such that $t_n=o(w_n)$ and
$\displaystyle\lim_{n\to\infty} \|\I_{\AA_{q_n,n}}\|_{\mathcal C_1}w_{n}\rho_{t_n}=0$, which is the content of hypothesis (1).\\

In order to prove \eqref{eq:D'rho-un}, we start by observing that 
$$
w_{n}\sum_{j=q_n+1}^{\lfloor w_{n}/k_n \rfloor}\mu\left( \AA_{q_n,n}\cap T^{-j}( \AA_{q_n,n})\right)\leq w_{n}\sum_{j=q_n+1}^{\lfloor w_{n}/k_n \rfloor}\mu\left( U_n\cap T^{-j}( U_n)\right)
$$
Then, we take $\phi=\psi=\I_{U_n}$,  in Definition \ref{def:dc}, to obtain that
\begin{align*}
\mu\left( U_n\cap T^{-j}( U_n)\right)
            &=\int_{\mathcal{X}} \phi\cdot (\phi\circ T^{j})d\mu
            \leq \left(\mu( U_n)\right)^2+ \left\| \I_{U_n}\right\|_{\mathcal C_1} \mu\left( U_n\right) \rho(j).
\end{align*}
Let $t_n$ be as above and take $(k_n)_{n\in\N}$ as in \eqref{eq:kn-sequence}. Recalling that  $\lim_{n\to\infty}w_{n}\mu(U_n)=\tau$,  
it follows that
\begin{align*}
w_{n}\sum_{j=q_n+1}^{\lfloor w_{n}/k_n \rfloor}\mu\left( U_n\cap T^{-j}( U_n)\right)
                & \le w_{n}\big\lfloor\tfrac {w_{n}}{k_n}\big\rfloor\mu\left( U_n\right)^2 +w_{n}\left\| \I_{ U_n}\right\|_{\mathcal C_1} \mu\left( U_n\right) \sum_{j=q_{n}+1}^{\lfloor w_{n}/k_n \rfloor}\rho_j \\
                & \leq \frac{w_{n}^{2}\mu(U_n)^{2}}{k_{n}}+w_{n}\left\| \I_{ U_n}\right\|_{\mathcal C_1} \mu\left( U_n\right) \sum_{j=q_{n}}^{\infty}\rho(j)\\
                &\leq \frac{\tau^2}{k_n}+\tau\left\| \I_{U_n}\right\|_{\mathcal C_1} \sum_{j=q_{n}}^{\infty}\rho(j)\xrightarrow[n\to\infty]{} 0,
\end{align*}
by choice of $k_n$ and hypothesis (2).
\end{proof}

\section{Observables with fractal maximal sets}
\label{sec:statement-of-results}

Let $\mathcal{C}$ denote the ternary Cantor set. In order to construct $\mathcal{C}$, we start by removing the middle third of the interval $\mathcal C_0:=[0,1]$ and define in this way the first approximation $\mathcal C_1$. Then, we start an iterative process where we build $\mathcal{C}_n$ by removing the middle third of each connected component of $\mathcal{C}_{n-1}$, as represented in Figure \ref{fig:cantor}. Repeating this process indefinitely, we obtain the set $\mathcal{C}=\cap_{n\geq1}\mathcal C_n$.
\begin{figure}[h]
\begin{tikzpicture}[decoration=Cantor set,line width=1mm,scale=2.5]
\draw (0,0) -- (3,0);
\node at (3.3,0) {$[0,1]$};
\draw decorate{ (0,-.5) -- (3,-.5) };
\node at (3.3,-.5) {$\mathcal{C}_1$};
\draw decorate{ decorate{ (0,-1) -- (3,-1) }};
\node at (3.3,-1) {$\mathcal{C}_2$};
\draw decorate{ decorate{ decorate{ (0,-1.5) -- (3,-1.5) }}};
\node at (3.3,-1.5) {$\mathcal{C}_3$};
\end{tikzpicture}
\caption{The construction of the ternary Cantor set. \label{fig:cantor}}
\end{figure}
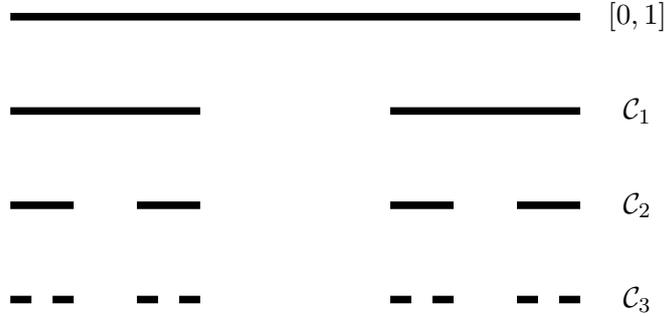

We define the observable to be the Cantor ladder function also used in \cite{MP16} as a prototype fractal landscape. Namely, for each $n\in\NN$, let $B_n:=\mathcal C_{n-1}\setminus\mathcal{C}_n$ so that $B_1=\left(\frac{1}{3},\frac{2}{3}\right)$,
$B_2=\left(\frac{1}{9},\frac{2}{9}\right)\cup \left(\frac{7}{9},\frac{8}{9}\right),\ldots$, \ie the sets $B_n$ correspond to the gaps of the Cantor set formed at the $n$-th step of its construction.
Now consider the observable
\begin{equation}
\label{eq:Cantor-ladder}
\varphi(x)=\left\{
  \begin{array}{ll}
    n, & \hbox{ if }x\in B_n,\ n=1,2,3\dots \\
    \infty, & \hbox{otherwise.}
  \end{array}
\right.
\end{equation}
\begin{figure}[h]
\includegraphics[width=0.7\textwidth]{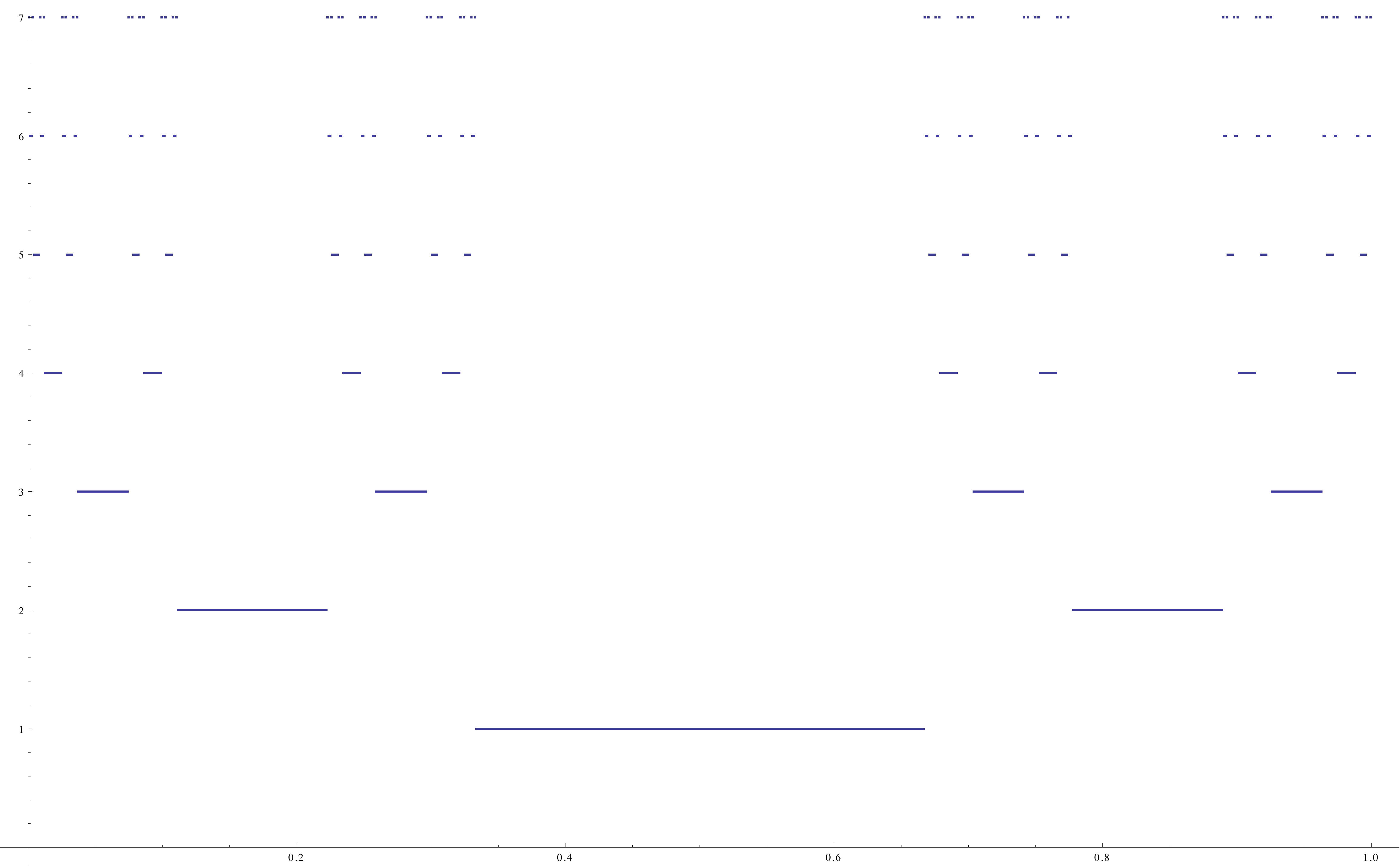}
\caption{The Cantor ladder function. \label{fig:ladder}}
\end{figure}

\begin{remark}
\label{rem:maximalset=Cantor}
Note that if $x\in \mathcal{C}$ then $x\not\in B_n$ for all $n\in\NN$, which implies that $\varphi(x)=\infty$. If $x\notin \C$ then $x\in B_n$ for some $n\in\N$ and therefore, in this case, we have that $\M=\C$.
\end{remark}

In this section, we will consider dynamical systems given by:
\begin{align}
T\colon& [0,1]\longrightarrow[0,1]\nonumber\\
&\phantomarrow{[0,1]}{x} m\cdot x \mod 1,\label{eq:dynamics}
\end{align}
where $m\in\N$. These are full branched uniformly expanding maps, which preserve Lebesgue measure (that we shall denote by $\mu$) and have exponential decay of correlations of BV observables against $L^1(\mu)$ (this follows from \cite[Corollary~8.3.1]{BG97} or \cite[Corollary~H]{AFLV11}, for example).

In \cite[Section~3]{MP16}, the authors considered the same observable $\varphi$ defined in \eqref{eq:Cantor-ladder} and the dynamics generated by an asymmetric tent map, which is also a full branched uniformly hyperbolic map, and conjectured the existence of a limiting extreme value law with an EI equal to 1, which was supported by the numerical simulations performed. We prove that when the dynamics considered is not compatible with the self-similar structure of the maximal set (which happens here when $m\neq 3^k$ for all $k\in\N$) then indeed the conjectured extreme limiting behaviour applies. To our knowledge, these are the first rigorously proved results for observables with fractal maximal sets.
\begin{theorem}
\label{thm:2x_mod1}
Let $(X_n)_{n\in\N}$ be the stochastic process given by \eqref{eq:stochastic_process} for a dynamical system $T$ defined in \eqref{eq:dynamics}, with $\N\ni m\neq3^k$ for all $k\in\N$. Consider a sequence of thresholds $(u_n)_{n\in\N}$ such  that $u_n=n$ and a sequence of times $(w_n)_{n\in\N}$ such that $w_n=\left\lfloor\tau \left(3/2\right)^{n}\right\rfloor $.
Then, condition~\eqref{eq:w_n} holds and moreover
$$ \lim_{n\to\infty} \mu(M_{w_{n}}\leq n)=\e^{ -\tau}.$$
\end{theorem}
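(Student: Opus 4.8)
The plan is to apply Theorem~\ref{thm:main} with the Banach space $\mathcal C_1=BV$, exploiting the exponential decay of correlations of the maps \eqref{eq:dynamics} (write $\rho(j)=C\lambda^{j}$ with $\lambda<1$). First I would identify the exceedance sets. Since $B_1\cup\cdots\cup B_n=[0,1]\setminus\mathcal C_n$, the level set $U_n=\{X_0>n\}=\{\varphi>n\}$ coincides, up to a null set, with the $n$-th Cantor approximation $\mathcal C_n$, a union of $2^{n}$ intervals of length $3^{-n}$; hence $\mu(U_n)=(2/3)^{n}$. With $w_n=\lfloor\tau(3/2)^{n}\rfloor$ this gives $w_n\mu(U_n)\to\tau$, so \eqref{eq:w_n} holds, and it remains to check hypotheses (1) and (2) of Theorem~\ref{thm:main} and that $\theta_n\to\theta$ with $\theta=1$.

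Next I would record the crude $BV$ estimates $\|\I_{U_n}\|_{BV}\le C2^{n}$ (one pair of jumps per component of $\mathcal C_n$) and $\|\I_{\AA_{q_n,n}}\|_{BV}\le C2^{n}m^{q_n}$ (each of the $q_n$ preimage constraints $T^{-i}(U_n)$ subdivides the components at most $m^{i}$-fold). Taking $q_n=\lceil\alpha n\rceil$ with $\alpha>\log2/(-\log\lambda)$ makes $\|\I_{U_n}\|_{BV}\sum_{j\ge q_n}\rho(j)\approx2^{n}\lambda^{q_n}\to0$, which is hypothesis (2); here $q_n\to\infty$ and, since $w_n$ grows exponentially, $q_n=o(w_n)$, so \eqref{eq:qn-def} holds. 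For hypothesis (1) I would choose any $t_n=o(w_n)$ for which $\lambda^{t_n}$ decays faster than $\exp(O(n))$, e.g.\ $t_n=n^{2}$, so that $\|\I_{\AA_{q_n,n}}\|_{BV}\,w_n\,\rho(t_n)\approx2^{n}m^{q_n}(3/2)^{n}\lambda^{n^{2}}\to0$.

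The core of the argument is the Extremal Index $\theta=\lim\theta_n$. Since $\theta_n\le1$ and $\AA_{q_n,n}=U_n\setminus\bigcup_{i=1}^{q_n}T^{-i}(U_n)$, a union bound gives
\begin{equation*}
0\le 1-\theta_n\le\frac{1}{\mu(U_n)}\sum_{i=1}^{q_n}\mu\bigl(\mathcal C_n\cap T^{-i}(\mathcal C_n)\bigr)=(3/2)^{n}\sum_{i=1}^{q_n}\mu\bigl(\mathcal C_n\cap T^{-i}(\mathcal C_n)\bigr),
\end{equation*}
so it suffices to show this bound tends to $0$. I would split the sum at an index $\beta n$ with $\beta>\log2/(-\log\lambda)$ (and $\alpha\ge\beta$). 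For the large range $\beta n\le i\le q_n$, decay of correlations against $L^1$ yields $\mu(\mathcal C_n\cap T^{-i}\mathcal C_n)\le(2/3)^{2n}+C2^{n}(2/3)^{n}\lambda^{i}$, whose contribution is at most $q_n(2/3)^{n}+C2^{n}\sum_{i\ge\beta n}\lambda^{i}=q_n(2/3)^n+C(2\lambda^{\beta})^{n}/(1-\lambda)\to0$. Correlations are useless on the small range $1\le i<\beta n$, because there $\lambda^{i}$ cannot defeat the factor $2^{n}$.

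The main obstacle is the geometric estimate on this small range: I must produce $\eta<2/3$ and $C>0$ with $\mu(\mathcal C_n\cap T^{-i}(\mathcal C_n))\le C\eta^{n}$ for all $1\le i<\beta n$, for then the remaining contribution is $\le q_n C(\eta/(2/3))^{n}\to0$, there being only $O(n)$ terms. Equivalently, the box dimension of each intersection $\mathcal C\cap T^{-i}(\mathcal C)$ must stay strictly below the box dimension $\log2/\log3$ of $\mathcal C$, with a gap uniform in $i$. To attack this I would write $\mathcal C_n\cap T^{-i}(\mathcal C_n)$ through the affine images $T^{i}(I_j)=m^{i}I_j\bmod 1$ of the $2^{n}$ basic intervals $I_j$ of $\mathcal C_n$ and measure how these length-$m^{i}3^{-n}$ intervals meet $\mathcal C_n$. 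When $m^{i}3^{-n}\ge1$ the images wrap fully and a change-of-variables gives $\mu(\mathcal C_n\cap T^{-i}\mathcal C_n)\approx(4/9)^{n}\le\eta^n$ outright, so the genuinely hard sub-range is $i\lesssim n\log3/\log m$, where the images are short arcs that must be shown not to concentrate on $\mathcal C_n$. This is precisely where $m\neq3^{k}$ is essential: if $m$ were a power of $3$, the map $T^{i}$ would merely shift ternary digits and the two digit-windows could align, giving no dimension drop (and clustering, $\theta<1$); for $m\neq3^{k}$ multiplication by $m^{i}$ scrambles the ternary expansion, forcing a transversal, Furstenberg-type incommensurability between the base-$m$ and base-$3$ structures and hence a strictly smaller box dimension. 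Establishing this uniform dimension drop with the fractal-geometry tools is the heart of the proof and the step I expect to demand the most work; granting it, the three estimates combine to give $\theta_n\to1$, and Theorem~\ref{thm:main} yields $\lim_n\mu(M_{w_n}\le n)=e^{-\theta\tau}=e^{-\tau}$.
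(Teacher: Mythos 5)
Your outer scaffolding matches the paper's proof: the identification $U_n=\mathcal C_n$ with $\mu(U_n)=(2/3)^n$, the application of Theorem~\ref{thm:main} with $\mathcal C_1=BV$ and $\rho(j)=Cm^{-j}$, a linearly growing $q_n$ (the paper takes $q_n=\lceil n\log3/\log m\rceil$), the choice $t_n=n^2$, the reduction of the EI computation via O'Brien's formula and a union bound to showing $(3/2)^n\sum_{i=1}^{q_n}\mu(\mathcal C_n\cap T^{-i}(\mathcal C_n))\to0$, and the elementary counting bound $\mu(\mathcal C_n\cap T^{-i}(\mathcal C_n))\le 3(2/3)^{2n}$ in the wrapped regime $m^{-i}<3^{-n}$, which the paper uses in Section~\ref{subsec:D-D'} to verify $\D'_{q_n}(u_n,w_n)$ directly (you instead invoke hypothesis (2) of Theorem~\ref{thm:main}; by Remark~\ref{rem:conditions} both routes are legitimate). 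But you stop precisely at the heart of the matter: the uniform estimate $\mu(\mathcal C_n\cap T^{-i}(\mathcal C_n))\le C\eta^n$ with $\eta<2/3$ on the short range $1\le i\lesssim n\log3/\log m$ is assumed, not proved, and your appeal to ``Furstenberg-type incommensurability'' names the phenomenon without supplying an argument — no off-the-shelf transversality theorem delivers this bound uniformly in $i$. This is what Section~\ref{subsec:box-dimension-estimates} exists for: Proposition~\ref{prop:dimension} realises each intersection $T^{-q}(\mathcal C)\cap\mathcal C$ as the attractor of a Digraph IFS via McClure's Theorem~\ref{th:maclure}, shows the substitution matrices are principal submatrices of an explicit $(0,1)$-matrix $N^q$, bounds $\rho(N^q)\le\sqrt3$ by a hands-on norm computation (Lemmas~\ref{le:lemma1}--\ref{le:lemma3}), deduces $\dim_B(T^{-q}(\mathcal C)\cap\mathcal C)\le 1/2<\log2/\log3$ from Mauldin--Williams and Das--Ngai, and reduces the case $m=3^kc$ (with $3\nmid c$) to the case of $c$ by self-similarity.

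There is a second gap you conceal with the word ``equivalently'': passing from the box dimension of the limit set $\mathcal C\cap T^{-q}(\mathcal C)$ to the Lebesgue measure of the stage-$n$ intersection $\mathcal C_n\cap T^{-q}(\mathcal C_n)$ is not a restatement. A priori, a level-$n$ interval of $\mathcal C_n$ and one of $T^{-q}(\mathcal C_n)$ can overlap even though the limit Cantor sets inside them are disjoint, so the count $N^*_{3^{-n}}$ of intersecting stage-$n$ components could vastly exceed the covering number of the limit intersection, and the dimension bound alone says nothing about $\mu(\mathcal C_n\cap T^{-q}(\mathcal C_n))$. The paper closes this in Section~\ref{subsec:dimension-EI} with Proposition~\ref{prop:N-relation}, whose proof needs Lemma~\ref{lem:gap-lemma}, a Gap-Lemma-type statement for affine copies of $\mathcal C$; note that Newhouse's original Gap Lemma does not apply here, since both Cantor sets have thickness exactly $1$ while the classical hypothesis requires the product of thicknesses to exceed $1$, which is exactly why the paper proves its own variant. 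So the two missing pieces — the Digraph-IFS/spectral-radius dimension bound and the thickness-based transfer from dimension to measure — constitute essentially all of Section~\ref{sec:absence-clustering}; what you establish correctly is the comparatively routine outer layer surrounding them.
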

The numerical simulations performed in \cite[Section~3]{MP16} also showed that, interestingly, the same limiting laws seem to apply when the dynamics is replaced by that of irrational rotations. The fact that these ergodic maps are not mixing and yet the agreement was still good, lead the authors of \cite{MP16} to conjecture that the role of the fast decay of correlations in assuring the validity of conditions such as $\D$ and $\D'$ was played, in this situation, by the complexity of the observable function. 
We also remark that, in all numerical studies performed in \cite{MP16} with observables maximised on fractal sets (with strictly positive Hausdorff dimension), the observed EI was always 1.
In our results we use heavily the excellent mixing properties of all the systems considered. However, not only we provide examples where the EI is strictly less than 1, as we explain how the EI is related with the compatibility between the dynamics and the fractal structure of the maximal set.
\begin{theorem}
\label{thm:3kx_mod1}
Let $(X_n)_{n\in\N}$ be the stochastic process given by \eqref{eq:stochastic_process} for a dynamical system $T$ defined in \eqref{eq:dynamics}, with $m=3^k$ for some $k\in\N$. Consider a sequence of thresholds $(u_n)_{n\in\N}$ such  that $u_n=n+k-1$ and a sequence of times $(w_n)_{n\in\N}$ such that  $w_n=\left\lfloor\tau \left(3/2\right)^{n+k-1}\right\rfloor$. Then, condition~\eqref{eq:w_n} holds and moreover
$$ \lim_{n\to\infty} \mu(M_{w_{n}}\leq n)=\e^{-\left(1-\frac{2^k}{3^k}\right) \tau}.$$
\end{theorem}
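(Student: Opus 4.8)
The plan is to deduce the statement from the general criterion in Theorem~\ref{thm:main}. The map $T(x)=3^kx\bmod 1$ preserves $\mu=\mathrm{Leb}$ and has exponential decay of correlations of $BV$ observables against $L^1(\mu)$, say $\rho(j)\le C\lambda^j$ with $\lambda\in(0,1)$. So it suffices to (i) verify the normalisation \eqref{eq:w_n}, (ii) check hypotheses (1) and (2) of Theorem~\ref{thm:main}, and (iii) show that the sequence $\theta_n$ from \eqref{def:thetan} converges, identifying its limit as $\theta=1-2^k/3^k$. Everything is driven by ternary coding: writing $x=\sum_{j\ge1}d_j3^{-j}$ with $d_j\in\{0,1,2\}$, under $\mu$ the digits $(d_j)$ are i.i.d.\ uniform, $T^i$ acts as the shift by $ki$ digits, and, up to a $\mu$-null set, $\{\varphi>N\}=\mathcal{C}_N=\{d_1,\dots,d_N\in\{0,2\}\}$ (recall $\M=\mathcal{C}$). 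Calling a digit \emph{good} when $d_j\in\{0,2\}$, the exceedance event $E_i:=\{X_i>u_n\}$ becomes ``all digits in the length-$u_n$ window $[ki+1,ki+u_n]$ are good'', where $u_n=n+k-1$. Since $\mu(U_n)=\mu(\mathcal{C}_{u_n})=(2/3)^{u_n}$ and $w_n=\lfloor\tau(3/2)^{u_n}\rfloor$, condition \eqref{eq:w_n} holds at once.

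The heart of the matter is step (iii), through O'Brien's formula \eqref{eq:OBriens-formula}, $\theta=\lim_n\mu(\mathcal{A}_{q_n,n})/\mu(U_n)=\lim_n\mu\bigl(\bigcap_{i=1}^{q_n}E_i^c\mid E_0\bigr)$. This is where the compatibility between $T$ and $\mathcal{C}$ enters: since the window length $u_n$ exceeds the shift $k$, consecutive windows overlap in $u_n-k$ positions. Conditioning on $E_0$ (digits $1,\dots,u_n$ good), I would show that for every \emph{near} index $i$ with $ki\le u_n-1$ the window $[ki+1,ki+u_n]$ splits into an already-good part $[ki+1,u_n]$ and exactly $ki$ fresh positions $[u_n+1,u_n+ki]$; hence, conditionally on $E_0$, $E_i$ is equivalent to ``the first $ki$ fresh digits are good''. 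These events are nested in $i$, so $E_1^c\Rightarrow E_i^c$ for all near $i$, and $\mu(E_1^c\mid E_0)=1-(2/3)^k$. The remaining \emph{far} indices ($ki\ge u_n$) have windows disjoint from $[1,u_n]$, so $\mu(E_i\mid E_0)=(2/3)^{u_n}=\mu(U_n)$; summing, $\mu\bigl(\bigcup_{\text{far }i\le q_n}E_i\mid E_0\bigr)\le q_n\mu(U_n)\sim\tau\,q_n/w_n\to0$ by \eqref{eq:qn-def}. Combining the upper bound $\theta_n\le\mu(E_1^c\mid E_0)$ with the lower bound $\theta_n\ge\mu(E_1^c\mid E_0)-q_n\mu(U_n)$ squeezes $\theta=1-2^k/3^k$, and Theorem~\ref{thm:main} then gives $\mu(M_{w_n}\le u_n)\to\e^{-(1-2^k/3^k)\tau}$.

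For step (ii) I would use crude $BV$ bounds together with the exponential rate. As $U_n=\mathcal{C}_{u_n}$ is a union of $2^{u_n}$ intervals, $\|\I_{U_n}\|_{BV}\le 1+2^{u_n+1}$, while $\mathcal{A}_{q_n,n}$ is a union of ternary cylinders of depth $kq_n+u_n$, so $\|\I_{\mathcal{A}_{q_n,n}}\|_{BV}\le 1+2\cdot 3^{kq_n+u_n}$. Choosing $q_n=\beta n$ with $\beta>\log 2/\log(1/\lambda)$ makes $\|\I_{U_n}\|_{BV}\sum_{j\ge q_n}\rho(j)\to0$, which is hypothesis (2), and such $q_n$ still obeys \eqref{eq:qn-def}. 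For hypothesis (1) I would take $t_n$ growing polynomially, e.g.\ $t_n=n^2$, so that $\|\I_{\mathcal{A}_{q_n,n}}\|_{BV}\,w_n\,\rho(t_n)\le C\,3^{kq_n+u_n}(3/2)^{u_n}\lambda^{t_n}\to0$ while $t_n=o(w_n)$; the factor $\lambda^{t_n}$ dominates the prefactors that are merely exponential in $n$. Together with the convergence of $\theta_n$, Theorem~\ref{thm:main} applies.

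The main obstacle is step (iii): the careful bookkeeping of the overlaps of the shifted length-$u_n$ windows, isolating the nesting phenomenon for near indices while simultaneously showing that the far indices contribute only $o(1)$ under the constraint $q_n=o(w_n)$. This is precisely the point where the self-similar structure of $\mathcal{C}$ meshes with the digit-shift dynamics of $3^kx\bmod1$ to produce clustering with $\theta<1$; by contrast, steps (i) and (ii) are routine, requiring only the exponential decay of correlations and elementary $BV$ estimates.
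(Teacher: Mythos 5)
Your proposal is correct, and it rests on the same structural core as the paper's proof, but it is packaged quite differently. The paper deduces Theorem~\ref{thm:3kx_mod1} from the general Theorem~\ref{thm:3kx_mod1-geral} for dynamically defined Cantor sets: the key identity there, $T^{-j}(\Lambda_N)\cap\Lambda_N=\Lambda_{N+kj}$ for $j\le N/k$, combined with the specific choice $q_n=\lfloor (n+k-1)/k\rfloor$, yields the \emph{exact} set identity $\mathcal{A}_{q_n,n}=\mathcal{C}_{n+k-1}\setminus\mathcal{C}_{n+2k-1}$, so that $\theta_n=1-2^k/3^k$ for every $n$ with no error term, and the BV bound $\|\I_{\mathcal{A}_{q_n,n}}\|_{BV}\le 2^{n+2k+1}$ comes from counting actual connected components. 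Your ternary-digit computation proves exactly the same overlap fact in symbolic coordinates: your statement that, conditionally on $E_0$, the event $E_i$ for a near index reduces to ``the first $ki$ fresh digits are good'' is precisely $T^{-i}(\mathcal{C}_N)\cap\mathcal{C}_N=\mathcal{C}_{N+ki}$ in digit language. The genuine difference is your choice $q_n=\beta n$ with $\beta>\log 2/\log(1/\lambda)$, which may exceed $u_n/k$; this forces you to introduce ``far'' indices and pay the union-bound term $q_n\mu(U_n)\sim\tau q_n/w_n\to0$, whereas the paper keeps $q_n\le u_n/k$ so that no far indices exist, and its hypothesis (2) still holds because the explicit rate $r=3^{-k}$ gives $2^{u_n}r^{q_n}\approx(2/3)^{u_n}\to0$ (your squeeze with generic $\lambda$ is self-consistent, just not sharp). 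Likewise your cruder cylinder bound $\|\I_{\mathcal{A}_{q_n,n}}\|_{BV}\le 1+2\cdot 3^{kq_n+u_n}$ is harmless since $\rho(t_n)=\lambda^{n^2}$ decays super-exponentially against any exponential prefactor. What your route buys is independence from the IFS machinery of Section~\ref{se:3x_mod1} and a transparent probabilistic picture (i.i.d.\ uniform digits, shift by $k$ per iterate); what the paper's route buys is exactness of $\theta_n$ and a statement valid for general dynamically defined Cantor sets, of which the ternary case is an application. One cosmetic remark: your conclusion $\mu(M_{w_n}\le u_n)\to \e^{-(1-2^k/3^k)\tau}$ is what the argument actually delivers; the ``$\mu(M_{w_n}\le n)$'' appearing in the theorem statement agrees with this only when $k=1$ and is best read as $\le u_n$ --- the paper's own proof has the same feature, so this is not a gap on your side.
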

In fact, in Section~\ref{se:3x_mod1}, we prove Theorem~\ref{thm:3kx_mod1}  as a corollary of Theorem~\ref{thm:3kx_mod1-geral} which applies to more general Cantor sets. We note that, in the context of both Theorems~\ref{thm:3kx_mod1} and \ref{thm:3kx_mod1-geral}, the compatibility between the dynamics and the maximal set becomes obvious when we observe that $T(\M)=\M$, which means that $T$ preserves the structure of the Cantor sets, which play the role of a periodic point in the context of when $\M$ is reduced to a single point. The proof will follow more or less the same strategy used in \cite{FFT12} and generalised later in \cite{AFFR16,AFFR17}, which basically exploits the periodicity of the maximal set in order to be able to compute the EI from the O'Brien's formula \eqref{eq:OBriens-formula} and then verify the conditions $\D_{q_n}(u_n,w_n)$ and $\D'_{q_n}(u_n,w_n)$ that were designed to be easily checked from the excellent mixing properties of the system. 

When $m\neq3^k$ for all $k\in\N$, although $T^{j}(\M)\neq\M$ for all $j\in\N$, one can easily check that, most of the times, we have that $T^{-j}(\M)\cap\M\neq\emptyset$ and this was enough to create clustering when $\M$ was a finite or countable set (see \cite{AFFR16,AFFR17}). However, here, the maximal set has a much more complex structure and one needs to evaluate how relevant the intersections $T^{-j}(\M)\cap\M\neq\emptyset$ are when compared with $\M$ itself, which translates to how compatible the dynamics of $T$ is with the fractal structure of $\M$. We will see that since $\C$ has a thickness not less than 1 (\ie the extractions in the construction of the Cantor set are relatively not too large), then the relevance of the intersection (or the compatibility between $T$ and $\M$) can be measured by the box dimension of the intersection $T^{-j}(\M)\cap\M$, when compared with the box dimension of $\M$ itself. We will show that the box dimension of $T^{-j}(\M)\cap\M$ is strictly less than that of $\M$ (Proposition~\ref{prop:dimension}), which means that the possible clustering created by the fact that $T^{-j}(\M)\cap\M\neq\emptyset$ is negligible and, in the limit, the EI is still 1. The computation of the EI is much more subtle and we need results from fractal geometry in order to compute the dimension of such intersections and then we need to study its impact on O'Brien's formula \eqref{eq:OBriens-formula}, for which we will perform a finer analysis, where we use of the notion of thickness of dynamically defined Cantor sets introduced by Newhouse in \cite{N79}. This will be done in Section~\ref{sec:absence-clustering}.

\section{The appearance of clustering with fractal landscapes}
\label{se:3x_mod1}

When the dynamics is compatible with the self-similar structure of the fractal maximal set, we observe the appearance of clustering and a limiting law with a non-trivial EI. In this context, we consider more general fractal sets. Namely, we will consider Cantor sets generated by an \emph{Iterated Function System} (IFS) satisfying some regular conditions. These Cantor sets can also be identified as the survivor sets for some conveniently chosen dynamical systems, which will also provide a common ground to assess the compatibility of the self-similarity structure with the original dynamics. We will start by providing a description of these more general dynamically defined Cantor sets. Then, we establish the existence of a limiting law with a non-trivial EI when the dynamics is compatible with the system generating the Cantor set and, finally, we apply it to the usual ternary Cantor set. 

\subsection{Dynamically defined Cantor sets}
\label{subsec:dynamic-Cantor}

We start with a description of a class of more general Cantor sets.

Let $r\in\N$ and $\mathscr I=\{f_1,\ldots,f_s\}$ be a regular finite family of normalised contractions defined on $[0,1]$, \ie each $f_i:[0,1]\to[0,1]$ is a $C^1$ diffeomorphism such that $$|f_i(x)-f_i(y)|\leq \lambda_i |x-y|, \quad \text{for some $\lambda_i<1$}$$ and the sets $J_i=f_i([0,1])$, for $i=1,\ldots,s$, are pairwise disjoint. $\mathscr I$ is in particular an \emph{Iterated Function System} (IFS). An atractor for $\mathscr I$ is the only compact subset, $\Lambda$, of $[0,1]$, such that $\Lambda=\cup_{i=1}^s f_i(\Lambda)$. For such an IFS  there exists a unique attractor $\Lambda$, whose Hausdorff and box dimensions (see Definitions~\ref{def:box-dim} and \ref{def:Hausdorff-dim} below) are both equal to $d$, where $\sum_{i=1}^s \lambda_i^d=1$.  See \cite[Chapter~9]{F03} for proofs and more details on the subject. The attractor $\Lambda$ can be seen as a dynamically defined Cantor set, \ie $\Lambda$ can be identified as the survivor set of the dynamical system $G:\R\to\R$ defined by 
$$
G(x)=\left\{
  \begin{array}{ll}
    f_i^{-1}(x), & \hbox{ if }x\in J_i \\
   2, & \hbox{ otherwise}
  \end{array}.
\right.
$$
Namely,
$$
\Lambda=\{x\in[0,1]\colon\; G^n(x)\in[0,1],\; \mbox{for all}\;n\in\N\}.
$$
Let $\Lambda_0=[0,1]$ and for all $n\in\N$ set
\begin{equation*}
\Lambda_n=G^{-1}(\Lambda_{n-1})=\{x\in[0,1]\colon T^j(x)\in[0,1],\;\mbox{for all $j=1,\ldots,n$}\}.
\end{equation*}
\begin{remark}
\label{rem:iterates-property}
Note that  $\Lambda=\cap_{n\geq0}\Lambda_n$ and, for all $j\in\N$, we have $G^{-j}(\Lambda_n)=\Lambda_{n+j}$ because if $G(x)\notin[0,1]$ then $G^j(x)\notin[0,1]$ for all $j\in\N$.
\end{remark}

\subsection{Laws of rare events for systems compatible with dynamically defined Cantor sets}
\label{subsec:EVL-general-Cantor-sets}
We adapt the definition of the observable function $\varphi:[0,1]\to \R$ so that, in this case, the maximal set is $\Lambda$. Namely, we set
$$
\varphi(x)=\left\{
  \begin{array}{ll}
    n, & \hbox{ if }x\in \Lambda_n\setminus\Lambda_{n+1},\ n=1,2,3\dots \\
    \infty, & \hbox{ if }x\in \Lambda
  \end{array}
\right.
$$

Now we define a dynamical system which is compatible with the dynamics that generated $\Lambda$, namely, we define $F:[0,1]\to[0,1]$ by $F(x)=G(x)$ for all $x\in\cup_{i=1}^s J_i$ and if $I$ denotes a connected component of $[0,1]\setminus \cup_{i=1}^s J_i$ then, on $I$, we define $F$ as a linear map so that $F$ maps $I$ onto $[0,1]$. Note that $F$ is a piecewise uniformly expanding map and therefore admits an invariant absolutely continuous probability measure $\mu$. Moreover, from \cite[Corollary~8.3.1]{BG97}, it follows that $F$ has exponential decay of correlations of BV observables against $L^1$, \ie for all $\phi\in BV$ and $\psi\in L^1(\mu)$, there exist $C>0$  and $0<r<1$ such that
\begin{equation}
\label{eq:DC-F}
\cv_\mu(\phi,\psi,n)\leq C r^n.
\end{equation}

\begin{theorem}
\label{thm:3kx_mod1-geral}
Let $(X_n)_{n\in\N}$ be the stochastic process given by \eqref{eq:stochastic_process} for the dynamical system $T=F^k$, for some $k\in\N$, where $F$ and the observable $\varphi$ are as defined just above. Consider a sequence of thresholds $(u_n)_{n\in\N}$ such  that $u_n=n+k-1$, a sequence of times $(w_n)_{n\in\N}$ such that  $w_n=\left\lfloor\tau (\mu(\Lambda_{n+k-1}))^{-1}\right\rfloor$. 

Assume that there exist a sequence $(t_n)_{n\in\N}$, with $t_n=o(w_n)$, and a sequence $(q_n)_{n\in\N}$, as in \eqref{eq:qn-def}, such that:   
$\lim_{n\to\infty}\left\| \I_{\AA_{q_n,n}}\right\|_{BV}   w_{n} r^{t_n}=0$ and $\lim_{n\to\infty}\left\| \I_{U_n}\right\|_{BV}r^{q_n}=0$. 

Assume moreover that there exists $0\leq\theta\leq1$ such that
$$
\theta=\lim_{n\to\infty}\frac{\mu(\Lambda_{n+k-1}\setminus\Lambda_{n+2k-1})}{\mu(\Lambda_{n+k-1})}.
$$

Then, 
$$ \lim_{n\to\infty} \mu(M_{w_{n}}\leq n)=\e^{-\theta \tau}.$$
\end{theorem}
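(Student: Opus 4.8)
The plan is to derive the statement from the general existence result, Theorem~\ref{th:exists_evl}, for which the two displayed hypotheses are tailored: in conjunction with the exponential decay \eqref{eq:DC-F} they are exactly assumptions (1) and (2) of Theorem~\ref{thm:main}, so by Remark~\ref{rem:conditions} they already deliver $\D_{q_n}(u_n,w_n)$ and $\D'_{q_n}(u_n,w_n)$. What then remains is to verify the normalisation \eqref{eq:w_n} and to identify the limit $\theta=\lim_n\theta_n$ appearing in O'Brien's formula \eqref{eq:OBriens-formula}; Theorem~\ref{th:exists_evl} then yields the limit $\e^{-\theta\tau}$.

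The structural fact underlying everything is the \emph{ladder identity} for $\varphi$. Since $F=G$ on $\bigcup_i J_i\supseteq\Lambda_1$ and $\Lambda_m=G^{-1}(\Lambda_{m-1})$, a point with $1\le\varphi(x)<\infty$ satisfies $F(x)\in\Lambda_{\varphi(x)-1}\setminus\Lambda_{\varphi(x)}$, i.e. $\varphi\circ F=\varphi-1$, and hence $\varphi\circ T=\varphi\circ F^{k}=\varphi-k$ on $\{\varphi\ge k\}$; moreover $T(\Lambda)=\Lambda$. This invariance of $\Lambda$ is the exact analogue of the periodicity of a single maximal point, and it is what produces the clustering reflected in $\theta<1$. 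With it, condition~\eqref{eq:w_n} is immediate: $U_n=\{X_0>u_n\}$ is the approximating set $\Lambda_{n+k-1}$ of $\Lambda$ prescribed by $u_n=n+k-1$, and the choice $w_n=\lfloor\tau\,(\mu(\Lambda_{n+k-1}))^{-1}\rfloor$ is made precisely so that $w_n\,\mu(U_n)\to\tau$.

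The core is the evaluation of $\theta=\lim_n\theta_n$, where I would exploit that on $U_n$ the map $T=F^{k}$ coincides with the Cantor map $G^{k}$ (for $x\in\Lambda_{n+k-1}$ all iterates $F^{j}x$, $0\le j\le k-1$, stay in $\Lambda_1$, where $F=G$). Combining the ladder identity with Remark~\ref{rem:iterates-property}, every within-descent return satisfies the clean nesting
\[
U_n\cap T^{-i}(U_n)=\Lambda_{n+k-1+ik}\subseteq\Lambda_{n+2k-1}=U_n\cap T^{-1}(U_n),
\]
so that the union of these returns is exactly $\Lambda_{n+2k-1}$. Hence, up to the post-descent returns discussed below, $\AA_{q_n,n}=U_n\setminus\bigcup_{i=1}^{q_n}T^{-i}(U_n)=\Lambda_{n+k-1}\setminus\Lambda_{n+2k-1}$, and dividing by $\mu(U_n)=\mu(\Lambda_{n+k-1})$ produces precisely the quotient in \eqref{def:thetan} whose limit is the assumed $\theta$.

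The step I expect to be the main obstacle is proving that the returns to $U_n$ occurring \emph{after} the ladder has bottomed out — the set of $x\in\Lambda_{n+k-1}\setminus\Lambda_{n+2k-1}$ with $T^{i}x\in U_n$ for some $i$ beyond the descent length $(\approx n/k)$ and up to $q_n$ — have measure $o(\mu(U_n))$. This is delicate because the summability requirement in hypothesis~(2) forces $q_n$ to grow roughly like $n$, so one cannot rely on the ladder alone and must control genuine dynamical returns to the exponentially small set $U_n$. I would bound this measure by $\sum_{i}\mu\bigl((\Lambda_{n+k-1}\setminus\Lambda_{n+2k-1})\cap T^{-i}(U_n)\bigr)$ and split each term, via the decay of correlations \eqref{eq:DC-F} against $L^1$, into the product part $\mu(\Lambda_{n+k-1}\setminus\Lambda_{n+2k-1})\,\mu(U_n)$ — whose sum contributes $\lesssim q_n\,\mu(\Lambda_{n+k-1})=o(1)$ after normalising by $\mu(U_n)$, since $q_n=o(w_n)$ and $w_n\mu(\Lambda_{n+k-1})\to\tau$ by \eqref{eq:qn-def} — and an exponentially decaying correlation correction controlled by the $BV$-norm bounds assumed in the hypotheses. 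Once $\theta_n\to\theta$ is secured, Theorem~\ref{th:exists_evl} closes the argument; Theorem~\ref{thm:3kx_mod1} then follows by specialising to $\Lambda=\mathcal C$ with $F$ the tripling map $3x\bmod 1$ and $\mu$ Lebesgue, where $\mu(\Lambda_m)$ has constant successive ratio $2/3$ and thus $\theta=1-(2/3)^{k}=1-2^{k}/3^{k}$.
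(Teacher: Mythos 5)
Your proposal is correct and follows the same skeleton as the paper's proof: reduce to Theorem~\ref{th:exists_evl} via Theorem~\ref{thm:main} (your two displayed hypotheses are exactly its conditions (1) and (2), given the exponential decay \eqref{eq:DC-F}), observe that \eqref{eq:w_n} holds by the very definition of $w_n$, and compute $\theta_n$ from the nesting identity $U_n\cap T^{-i}(U_n)=\Lambda_{n+k-1+ik}$, whose union over $i\geq 1$ is $\Lambda_{n+2k-1}$. Where you genuinely diverge is the treatment of the range of $i$. The paper proves the identity only for $i\le (n+k-1)/k$ (your ``descent window'') and then asserts the \emph{exact} equality $\AA_{q_n,n}=\Lambda_{n+k-1}\setminus\Lambda_{n+2k-1}$; this is literally valid only when $q_n$ stays within that window --- which is precisely the choice $q_n=\lfloor (n+k-1)/k\rfloor$ made in the application to Theorem~\ref{thm:3kx_mod1} --- because, as you note, once an orbit has descended the ladder and fallen into a gap, $F$ maps the gap onto $[0,1]$ and returns to $U_n$ do occur for larger $i$. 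So your post-descent correction addresses a regime the paper's proof silently avoids rather than resolves, and its first half is sound: bounding by $\mu(U_n\cap T^{-i}(U_n))\le \mu(U_n)^2+C\|\I_{U_n}\|_{BV}\,\mu(U_n)\,r^{ki}$, the product part contributes $q_n\,\mu(U_n)\to 0$ since $q_n=o(w_n)$ and $w_n\mu(U_n)\to\tau$. The one loose end is the correlation part: after dividing by $\mu(U_n)$ it is of order $\|\I_{U_n}\|_{BV}\,r^{k i_0}$ with $k i_0\sim n$, and the stated hypothesis $\|\I_{U_n}\|_{BV}\,r^{q_n}\to 0$ implies this only when $q_n\lesssim n$; for faster-growing $q_n$ (which \eqref{eq:qn-def} and the BV hypotheses do permit, since $r^{q_n}$ only gets smaller) you would need the slightly different bound $\|\I_{U_n}\|_{BV}\,r^{n}\to 0$, which holds in all the intended applications but is not literally among the hypotheses. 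In short: same approach and a correct derivation of the limit; your extra step buys robustness in $q_n$ that the theorem's literal statement arguably calls for, at the price of one unstated (mild) BV-growth assumption, whereas the paper's route is cleaner but implicitly restricted to $q_n\le(n+k-1)/k$.
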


\begin{proof}
We start by noting that for the sequence of thresholds $u_{n}=n+k-1$, we have $U_n=\Lambda_{n+k-1}$ and then the definition of $w_n$ makes condition \eqref{eq:w_n} trivially satisfied. 

Next, we observe that the compatibility between $F$ and $G$ allows for a simple characterisation of the sets $\AA_{q_n,n}$. 
We claim that 
$$
 \AA_{q_n,n}=\Lambda_{n+k-1}\setminus \Lambda_{n+2k-1}.
$$
To check this claim we start by proving that for all $j\leq n/k$, we have $$ T^{-j}(\Lambda_n)\cap \Lambda_n=\Lambda_{n+kj}.$$ Clearly, $\Lambda_{n+kj}\subset  T^{-j}(\Lambda_n)\cap \Lambda_n$. For the other inclusion, consider that $x\in T^{-j}(\Lambda_n)\cap \Lambda_n$. Since $x\in\Lambda_n$, then $G^i(x)\in[0,1]$ for all $i=1,\ldots,n$ and since $j\leq n/k$ then $T^j(x)=F^{jk}(x)=G^{jk}(x)\in[0,1]$. Since $x\in T^{-j}(\Lambda_n)$, then $G^{jk}(x)\in\Lambda_n$, which means that $G^i(x)\in[0,1]$ for all $i=1,\ldots,n+jk$ and therefore $x\in \Lambda_{n+kj}$. 
Now, observing that $\Lambda_n^c\subset \Lambda_{n+1}^c$ for all $n\in\N$ and recalling the definition of $ \AA_{q_n,n}$ we obtain:
$$
  \AA_{q_n,n}=\bigcap_{i=1}^{q_{n}} T^{-i}(\Lambda_{n+k-1}^{c})\cap \Lambda_{n+k-1}=\Lambda_{n+2k-1}^{c}\cap \Lambda_{n+k-1}=\Lambda_{n+k-1}\setminus\Lambda_{n+2k-1}. 
$$ 
The fact that $F$ has decay of correlations of BV against $L^1$ as expressed in \eqref{eq:DC-F} together with the assumptions $\lim_{n\to\infty}\left\| \I_{\AA_{q_n,n}}\right\|_{BV}   w_{n} r^{t_n}=0$ and $\lim_{n\to\infty}\left\| \I_{U_n}\right\|_{BV}r^{q_n}=0$ guarantee that conditions (1) and (2) from Theorem~\ref{thm:main} hold. Moreover, the assumption on $\theta$ gives that $\theta_n=\mu( \AA_{q_n,n})/\mu(U_n)\xrightarrow[n\to\infty]{}\theta$. Consequently, the result follows from direct application of Theorem~\ref{thm:main}.
\end{proof}
\subsection{Application to the ternary Cantor set}
\label{subsec:clustering-ternary}
We  apply Theorem~\ref{thm:3kx_mod1-geral} to the ternary Cantor and prove Theorem~\ref{thm:3kx_mod1}.
\begin{proof}[Proof of Theorem~\ref{thm:3kx_mod1}]
We start by checking the hypothesis of Theorem~\ref{thm:3kx_mod1-geral} and then verify the formula provided for the Extremal Index $\theta$.
In this case, the IFS is given by $f_1(x)=1/3x$ and $f_2(x)=1/3x+2/3$, the map $F$ is given by $F(x)=3x \mod 1$, the cantor set $\Lambda=\mathcal C$ and $\Lambda_n=\mathcal C_n$. The invariant measure $\mu$ is Lebesgue measure and the rate of decay of correlations expressed in \eqref{eq:DC-F} is such that $r=1/3$. We set $q_n=\floor{(n+k-1)/k}$ and observe that $U_n=\mathcal C_{n+k-1}$ and $\AA_{q_n,n}=\mathcal{C}_{n+k-1}\setminus \mathcal{C}_{n+2k-1}$. Since $\mathcal C_n\subset \mathcal C_{n-1}$, for all $n\in\N$, and $\mu(\mathcal C_{n})=(\frac23)^n$, then $w_n=\left\lfloor\tau \left(3/2\right)^{n+k-1}\right\rfloor$,  $q_n=o(w_n)$ and we obtain:
\begin{equation*}
\mu(\AA_{q_n,n})=\left(\frac23\right)^{n+k-1}-\left(\frac23\right)^{n+2k-1}=\left(1-\frac{2^k}{3^k}\right)\left(\frac{2}{3}\right)^{n+k-1}
\end{equation*}
and, moreover,
$$\|\I_{U_{n}}\|_{BV}\leq2^{n+k+1},\qquad\|\I_{\AA_{q_n,n}}\|_{BV}\leq2^{n+2k}+1\leq2^{n+2k+1}.  $$
Let $t_{n}=n^{2}$ and note that clearly
$t_n =o(w_{n})$.
Since $r=1/3$, then
\begin{align*}
    \lim_{n\to\infty}\left\| \I_{\AA_{q_n,n}}\right\|_{BV}   w_{n} r^{t_n}
    &\leq \lim_{n\to\infty}  \left\lfloor\tau \left(3/2\right)^{n+k-1}\right\rfloor 2^{n+2k+1}  r^{n^{2}} \\
        &\leq 2\lim_{n\to\infty}  2^{k-1}\tau 3^{n+k-1}  \left(1/3^{k} \right) ^{n^{2}} + 2^{n+2k}\left(1/3^{k} \right) ^{n^{2}} = 0.
\end{align*}
Moreover, there exists some constant, $C'>0$, such that
\begin{align*}
  \lim_{n\to\infty}\left\| \I_{U_{n}}\right\|_{BV} r^{q_n}
                  &= \lim_{n\to\infty} 2^{n+k+1} r^{q_n}\leq\lim_{n\to\infty} 2^{n+k+1} (1/3^k)^{n/k+1-1/k}\leq C'\lim_{n\to\infty} \left( 2/3 \right)^{n} =0. 
\end{align*}
Finally, we use O'Brien's formula to compute the EI:
$$
\lim_{n\to\infty}\theta_n=\lim_{n\to\infty}\frac{\mu(\AA_{q_n,n})}{\mu(U_n)}=\lim_{n\to\infty}\frac{\left(1-\frac{2^k}{3^k}\right)\left(\frac{2}{3}\right)^{n+k-1}}{\left(\frac{2}{3}\right)^{n+k-1}}=\left(1-\frac{2^k}{3^k}\right)=:\theta.
$$
As a consequence of Theorem~\ref{thm:3kx_mod1-geral}, we obtain
$\lim_{n\to\infty} \mu(M_{w_{n}}\leq n)=\e^{-\left(1-\frac{2^k}{3^k}\right) \tau}  .
$
\end{proof}

\section{The absence of clustering for fractal maximal sets}
\label{sec:absence-clustering}

As we mentioned earlier, when there is no compatibility between the dynamics and the self similarity structure of the fractal maximal set, then no clustering of rare events is expected. This compatibility is related to the significance of the intersections between the maximal set and its iterates. The significance will be measured by the box dimension of those intersections and therefore we start in Section~\ref{subsec:fractal-preliminaries} by recalling some techniques that we will use in order to estimate the dimension of the referred intersections, which will be carried out in Section~\ref{subsec:box-dimension-estimates}. Then, in Section~\ref{subsec:dimension-EI}, we translate the significance of the intersection expressed in terms of box-dimension into the relevance of the measure of $\AA_{q_n,n}$ when compared with the measure of $U_n$. This will be done using the notion of \emph{thickness} used by Newhouse in \cite{N79}. Finally, in Section~\ref{subsec:D-D'}, we prove conditions $\D$ and $\D'$, in order to conclude the proof of Theorem~\ref{thm:2x_mod1}.

\subsection{Preliminaries and notions from Fractal Geometry}
\label{subsec:fractal-preliminaries}

One of the main difficulties to prove the existence of extreme value laws for stochastic processes arising from observables maximised on Cantor sets is to calculate the EI $\theta$ based on O'Brien's formula. In the next sections, we will present a technique to calculate the EI based on the box dimension of the sets $T^{-j}(\C)\cap \C$.
To do this, we will need a construction given in \cite{M08}, where the author uses Digraph Iterated Function Systems, introduced in \cite{MW88}, in order to describe the intersection of fractal sets. Then, we will combine a result from \cite{MW88} to estimate the Hausdorff dimension of such intersections with a result from \cite{DN04}, which relates the respective Hausdorff and box dimensions, so that we obtain an estimate for the box dimension of the intersections $T^{-j}(\C)\cap \C$, which we will, ultimately, use later to compute the EI.

We start by recalling some notions of Fractal Geometry (referring to the book \cite{F03} for further details) and, in particular, the concept of Digraph Iterated Function System used by McClure to describe the intersection of fractal sets in \cite{M08}.
\begin{definition}
[Box Dimension]
\label{def:box-dim}
Let $F$ be a subset of $\mathbb{R}^{d}$, then, the box dimension of $F$ is defined as
\begin{equation}
\label{eq:box_dimension}
\text{dim}_{B}(F)=\lim_{\varepsilon\to 0} \frac{\log N_{\varepsilon}(F)}{-\log \varepsilon},
\end{equation}
where $N_{\epsilon}(F)$ denotes the smallest number of balls of radius $\varepsilon$ that cover $F$, whenever the limit exists. The upper and lower box dimension are defined by taking the $\limsup$ and $\liminf$, respectively, in the previous limit.
\end{definition}

\begin{definition}
[Hausdorff Dimension]
\label{def:Hausdorff-dim}
Let $F$ be a subset of $\mathbb{R}^{d}$ and $\{F_i\}_{i\in\N}$ be a countable collection of sets, with diameter at most $\delta$, that cover $F$. For $\alpha\geq 0$, we define the $\alpha$ - dimensional Hausdorff measure of $F$ as
$$ H^{\alpha}(U) = \lim_{\delta\to 0} \text{inf}\left\{ \sum^{\infty}_{i=1} \lvert F_i\rvert^{\alpha} : \text{where } \{F_i\} \text{ is a } \delta-\text{cover of } F  \right\}.$$
The Hausdorff dimension of $F$ is defined as
$$ \text{dim}_{H}(F)=\text{inf}\lbrace \alpha:H^{\alpha}(F)=0\rbrace=\text{sup}\lbrace \alpha:H^{\alpha}(F)=\infty\rbrace.$$
\end{definition}

Both definitions of dimension are finitely stable, \ie if $\{F_1,\ldots,F_n\}$ is a finite collection of subsets of $\mathbb{R}^{d}$, then
$$ \text{dim}_{B}\left(\bigcup^{n}_{i=1}F_i\right)=\underset{i}{\text{max }} \text{dim}_{B}(F_i) \qquad\mbox{and}\qquad \text{dim}_{H}\left(\bigcup^{n}_{i=1}F_i\right)=\underset{i}{\text{max }} \text{dim}_{H}(F_i).$$
\begin{remark}
\label{rem:Cantor-dimension}
We note that the box dimension of the ternary Cantor set is $\text{dim}_{H}(\C)=\text{dim}_{B}(\C)=\log2/\log3$. Moreover, since $\C$ can be generated from an Iterated Function System (IFS), which satisfies the so called \emph{open set condition}, then the box dimension of $\C$ coincides with its Hausdorff dimension. We defer to \cite{F03} for definitions and proofs of these statements.
\end{remark}

The notion of Digraph Iterated Function Systems (Digraph IFS) generalizes the most common setup of IFS. We follow closely the notation and presentation in \cite{M08}.
\begin{definition}[Digraph IFS]
A Digraph IFS consists of a digraph $G$ where the set of vertices is denoted by $V$ and the set of edges is denoted by $E$. To each of the vertices, we associate a metric space $X_v$. Furthermore, to each of the edges between two vertices $u$ and $v$, denoted by $e\in E_{uv}$, we associate a similarity $f_e:X_v\to X_u$ with ratio $r_e$.
For every path $\alpha$ in the graph $G$, we form the function $f_{\alpha}$ by composing the functions $f_{e}$ along the path in reverse order. The ratio $r_{\alpha}$ of $f_{\alpha}$ is just the product of the ratios of the composed functions.
If every $r_{\alpha}$ is less than one, then, there exists a set, $W$, which is a union of compact sets $W_v$, one for every vertex, such that for every $u\in V$,
\begin{equation}
W_u =\bigcup_{v\in V}\bigcup_{e\in E_{uv}} f_e(W_v).
\end{equation}
This invariant set, $W$, is called the attractor of the Digraph IFS. The existence of this set $W$ is guaranteed if the similarities $f_{e}$ have ratios smaller than one (see \cite{E08}).
\end{definition}

It is possible to represent a Digraph IFS in matrix notation. For that purpose, we construct a Digraph IFS matrix, $M^{*}$, with entries indexed by $(u,v)\in V\times V$. The value of each entry will be the set of edges that link one vertex to another. To a Digraph IFS, $G$, we also associate a Digraph IFS substitution matrix, $M$, which is just the adjacency matrix of digraph $G$.

If $E$ is an attractor of a standard IFS and $g$ is a bijection, then $E\cap g(E)$ can be represented as an attractor of a Digraph IFS. Namely,
\begin{theorem}[Theorem~1 of \cite{M08}]
\label{th:maclure}
Let $E$ be an attractor of an IFS, $\{f_i\}_{i=1}^{m}$, such that all functions $f_i$ are bijective contractions. Assume that there exists a finite set of bijections, $S$, such that, for all $g\in S$ satisfying $E\cap g(E)\neq \emptyset$ and for all $i,j=1,\ldots,m$ satisfying $E\cap f_{i}^{-1}gf_j(E)\neq \emptyset$, then $f_{i}^{-1}gf_j\in S $.
Under this condition the list of sets $ \left\{E\cap g(E):g\in S\right\} $ forms the attractor of a Digraph IFS.
\end{theorem}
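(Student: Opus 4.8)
The plan is to exhibit explicitly the Digraph IFS whose attractor is the indexed list $\{E\cap g(E):g\in S\}$. I would take the elements of $S$ as the vertex set $V$ of the digraph and attach to each vertex $g$ the compact set $W_g:=E\cap g(E)$; all edge maps will be among the original contractions $f_1,\dots,f_m$, so that every edge ratio equals some $r_i<1$ and the existence and uniqueness of the attractor follows from the cited result \cite{E08}. The substance of the argument is then to show that the family $\{W_g\}_{g\in S}$ satisfies the defining self-referential system $W_u=\bigcup_{v}\bigcup_{e\in E_{uv}}f_e(W_v)$, after which uniqueness forces it to be that attractor.

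First I would use the self-similarity relation $E=\bigcup_{i=1}^m f_i(E)$ to write
\[
E\cap g(E)=\Bigl(\bigcup_{i=1}^m f_i(E)\Bigr)\cap g\Bigl(\bigcup_{j=1}^m f_j(E)\Bigr)=\bigcup_{i=1}^m\bigcup_{j=1}^m\bigl(f_i(E)\cap gf_j(E)\bigr).
\]
The decisive step is the rewriting of each piece: since $f_i$ is a bijection, one has $f_i(E)\cap gf_j(E)=f_i\bigl(E\cap (f_i^{-1}gf_j)(E)\bigr)$, so that every piece in the decomposition of $E\cap g(E)$ is the image under the contraction $f_i$ of one of the intersection sets $E\cap h(E)$, with $h=f_i^{-1}gf_j$.

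Next I would bring in the closure hypothesis on $S$. The piece $f_i(E)\cap gf_j(E)$ is nonempty exactly when $E\cap (f_i^{-1}gf_j)(E)\neq\emptyset$, and precisely in that case the hypothesis yields $h=f_i^{-1}gf_j\in S$. Retaining only the nonempty pieces therefore gives
\[
W_g=\bigcup_{h\in S}\ \bigcup_{\substack{i,j\,:\,f_i^{-1}gf_j=h}} f_i(W_h),
\]
which is exactly the Digraph IFS relation once I declare, for each $g\in S$ and each pair $(i,j)$ with $f_i^{-1}gf_j=h$ and $E\cap h(E)\neq\emptyset$, an edge from $g$ to $h$ carrying the similarity $f_i$ of ratio $r_i$. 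Because $S$ is finite, the digraph has finitely many vertices and edges, and since every edge ratio is $<1$, the Digraph IFS admits a unique attractor; as $\{W_g\}_{g\in S}$ solves its fixed-point system, it coincides with that attractor.

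The main obstacle I anticipate is not analytic depth but careful bookkeeping. I must confirm that $f_i^{-1}gf_j$ is again a bijection, so that the rewriting $f_i(E)\cap gf_j(E)=f_i\bigl(E\cap (f_i^{-1}gf_j)(E)\bigr)$ is legitimate; invoke the closure condition only on the nonempty pieces, where it is stated to hold; and decide how to treat vertices $g$ with $E\cap g(E)=\emptyset$ (these may be dropped or kept as empty components, both being consistent with the Digraph IFS formalism). Once the self-referential system is correctly assembled with these points settled, the theorem follows immediately from the uniqueness of the Digraph IFS attractor.
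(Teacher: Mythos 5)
Your proposal is correct, and it is essentially the canonical argument: the paper itself imports this statement from \cite{M08} without proof, but the identity $f_i(E)\cap gf_j(E)=f_i\bigl(E\cap (f_i^{-1}gf_j)(E)\bigr)$ combined with the closure hypothesis on $S$ is exactly what underlies the construction the paper describes right after the theorem (the matrix $M^{*}$ whose $(g,h)$ entry collects the $f_i$ with $h=f_i^{-1}gf_j$), and it matches McClure's original proof. Your handling of the bookkeeping points — bijectivity of $f_i^{-1}gf_j$, invoking the closure condition only on nonempty pieces, and discarding (or tolerating) vertices with $E\cap g(E)=\emptyset$ before applying uniqueness of the Digraph IFS attractor — is sound and complete.
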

In order to construct the Digraph IFS whose attractor is $E\cap g(E)$, it is necessary to use an iterative process to find the set of functions $S$. We start with a set $S_0=\left\{ g\right\}$, then we define the set $$S_{k+1}=S_k\cup \left\{ f_{i}^{-1}hf_{j} : h\in S_k \text{ and } i,j=1,\ldots,m\right\}.$$
To fulfil the hypotheses of Theorem~\ref{th:maclure}, in each step, we select only those functions, $h$, such that $E\cap h(E)\neq \emptyset$. We continue the procedure until no new function is found. The functions in $S$ will work as the vertices of the Digraph IFS while the edges will be labelled by the functions $f_i$. So, each row and line of the matrix $M^{*}$ have an associated function that belongs to the set $S$. Each entry of this matrix can be represented by a pair of functions $(g,h)\in S\times S$. Each of the entries $(g,h)$ will be a finite set of functions
\begin{equation*}
\left\{f_1,f_2,\dots, f_k \right\},
\end{equation*}
whose cardinality is the number of directed edges from $g$ to $h$, where a function $f_i$ belongs to this set if and only if
$$h=f_{i}^{-1}gf_j,$$
for some $j$.

The substitution matrix, $M$, of the Digraph IFS, will be the matrix $M^{*}$ but with each entry replaced by the cardinality of the corresponding set. 

\begin{definition}[Open Set Condition]
\label{de:opensetcondition}
A Digraph IFS satisfies the open set condition if and only if there exists open sets $\Omega_v\in X_v$, such that, for every $u,v\in V$ and $e\in E_{uv}$,
$$f_e(\Omega_v)\subseteq \Omega_u  $$
and for all $u,v,v'\in V$, $e\in E_{uv}$ and $e'\in E_{uv}$ with $e'\neq e$,
$$ f_e(\Omega_v)\cap f_{e'}(\Omega_{v'})=\emptyset .$$
\end{definition}

From \cite{MW88}, one has that the Hausdorff dimension of  the attractor $W$ of a Digraph IFS satisfying the open set condition (which, by \cite{DN04}, under certain conditions that are verified in our setting, is equal to its box dimension) can be written in terms of the spectral radius of $M$ and  the common ratios of the similarities of the Digraph IFS. For this reason, we recall here the definition and some useful properties of the spectral radius of a matrix. Let $A\in \M^n(\mathbb{C})$ be a complex matrix. The spectral radius of $A$ is defined as
$$\rho(A)=\text{max}\left\{\lvert\lambda\rvert:\lambda \text{ is an eigenvalue of } A \right\}.$$
The Euclidean norm of a complex matrix $A\in \M^n(\mathbb{C})$ is defined as
\begin{equation*}
\left\lVert A \right\rVert_{2} = \underset{x\neq 0}{\text{sup}} \frac{\left\lVert Ax \right\rVert_{2}}{\left\lVert x \right\rVert_{2}},
\end{equation*}
where $\left\lVert x \right\rVert_{2}$ is the usual Euclidean vector norm.
This norm is multiplicative (in some literature also called consistent or sub-multiplicative) in the sense that satisfies $\left\lVert AB \right\rVert_{2}\leq \left\lVert A \right\rVert_{2}\left\lVert B \right\rVert_{2}$ for arbitrary matrices $A$ and $B$.  Moreover, we have (see  \cite{DP65}, for example):
\begin{equation}
\label{eq:raio-norma}
 \rho(A) \leq \left\lVert A \right\rVert_{2}.
 \end{equation}
Assume that $A$ is a nonnegative matrix, \ie every entry is either positive or $0$. If  $B$ is a principal submatrix of $A$, then the spectral radius of $B$ satisfies (see \cite{BP94}, for example):
\begin{equation}
\label{eq:raio-sub-raio}
\rho(B)\leq\rho(A).
\end{equation}

We end this subsection by stating an inequality that will become very useful later in the estimation of the spectral radius of the adjacency matrix that we will construct.
\begin{proposition}
\label{prop:peter_paul}
Let $a,b$ by any positive real numbers and consider $\varepsilon>0$. Then,
$$2ab\leq \frac{a^2}{\varepsilon}+\varepsilon b^{2}. $$
\end{proposition}

\subsection{Intersection of Fractal Sets}
\label{subsec:box-dimension-estimates}
We will now apply the procedure described in Section~\ref{subsec:fractal-preliminaries} (see \cite[Section~2.2]{M08} for further details) to estimate the box dimension of the set $T^{-q}(\mathcal{C})\cap \mathcal{C}$. Namely, our main goal is to show:
\begin{proposition}
\label{prop:dimension}
Let $T=mx\mod1$, where $\N\ni m\neq 3^k$ for any $k\in\N$. Then, for all $q\in\N$, we have:
$$
\emph{dim}_H(T^{-q}(\mathcal{C})\cap\mathcal{C})=\emph{dim}_B(T^{-q}(\mathcal{C})\cap\mathcal{C})\leq\frac12.
$$
\end{proposition}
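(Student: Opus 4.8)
The plan is to realise $T^{-q}(\mathcal{C})\cap\mathcal{C}$ as the attractor of a Digraph IFS and then bound its Hausdorff (hence box) dimension via the spectral radius of the associated substitution matrix $M$. The key observation is that $T^{-q}$ acts on $\mathcal{C}$ as a union of affine maps: since $T=mx\bmod 1$, we have $T^{-q}(\mathcal{C})=\bigcup_\ell g_\ell(\mathcal{C})$, where each $g_\ell(x)=(x+\ell)/m^q$ is a contraction by $m^{-q}$ sending $[0,1]$ into one of the $m^q$ dyadic-type subintervals. Thus $T^{-q}(\mathcal{C})\cap\mathcal{C}=\bigcup_\ell\bigl(g_\ell(\mathcal{C})\cap\mathcal{C}\bigr)$, and since box dimension is finitely stable it suffices to bound $\dim_B(g_\ell(\mathcal{C})\cap\mathcal{C})$ for each $\ell$ — an intersection of the form $\mathcal{C}\cap g(\mathcal{C})$ with $g$ a bijective affine map, which is exactly the setting of Theorem~\ref{th:maclure}.

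First I would apply McClure's construction: with $\mathcal{C}$ generated by $f_1(x)=x/3$, $f_2(x)=x/3+2/3$, I build the finite set $S$ of maps $f_i^{-1}hf_j$ by iterating from $S_0=\{g_\ell\}$, retaining only those $h$ with $\mathcal{C}\cap h(\mathcal{C})\neq\emptyset$. The crucial point is that each $f_i^{-1}hf_j$ again has contraction ratio $1/3$ relative to the word length, so all similarities in the resulting Digraph IFS share the common ratio $r=1/3$, and the open set condition holds because $\mathcal{C}$ satisfies it (Remark~\ref{rem:Cantor-dimension}). By \cite{MW88} together with \cite{DN04}, the dimension $d$ of the attractor satisfies $(1/3)^d\,\rho(M)=1$, i.e.
$$
\dim_B\bigl(\mathcal{C}\cap g_\ell(\mathcal{C})\bigr)=\frac{\log\rho(M)}{\log 3},
$$
so the claim $\dim_B\leq\tfrac12$ reduces to showing $\rho(M)\leq 3^{1/2}=\sqrt{3}$.

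The main obstacle — and the heart of the argument — is the uniform bound $\rho(M)\leq\sqrt{3}$ on the substitution matrix, uniformly over $q$ and $\ell$, using only the hypothesis $m\neq 3^k$. The idea is to control $\rho(M)$ through $\|M\|_2$ via \eqref{eq:raio-norma}, exploiting the sparse combinatorial structure of $M$: for a given vertex $h$, the number of pairs $(i,j)\in\{1,2\}^2$ producing a nonzero edge is severely restricted because the overlap condition $\mathcal{C}\cap f_i^{-1}hf_j(\mathcal{C})\neq\emptyset$ forces strong alignment of the two Cantor copies. The hypothesis $m\neq 3^k$ is precisely what prevents $g_\ell$ from mapping $\mathcal{C}$ onto a piece of $\mathcal{C}$ in a self-similar way, and this incompatibility is what caps the number of admissible edges per row/column at a value small enough to force $\|M\|_2\leq\sqrt3$. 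Here I expect to invoke Proposition~\ref{prop:peter_paul} to symmetrise the off-diagonal contributions to $\|M\|_2$ (splitting cross terms $2ab$ as $a^2/\varepsilon+\varepsilon b^2$), and the submatrix monotonicity \eqref{eq:raio-sub-raio} to reduce to a manageable finite block. The delicate case analysis distinguishing how $m\bmod 3$ behaves — i.e. which residues $\ell$ allow overlaps — is where the real work lies, and it is what ultimately yields the sharp constant $1/2$ rather than the trivial bound $\log 2/\log 3=\dim_B(\mathcal{C})$.
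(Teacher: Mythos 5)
Your framework is exactly the paper's --- McClure's digraph IFS construction via Theorem~\ref{th:maclure}, the dimension formula from \cite{MW88} combined with \cite{DN04}, the reduction $\rho(M)\le\lVert M\rVert_2$ via \eqref{eq:raio-norma}, Proposition~\ref{prop:peter_paul}, and the principal-submatrix inequality \eqref{eq:raio-sub-raio} --- but the proposal stops precisely where the proof begins. The bound $\rho(M)\leq\sqrt3$ is announced as ``the main obstacle'' with only a heuristic, and the heuristic as stated cannot deliver it: the sparsity you invoke (few admissible pairs $(i,j)$ per vertex) is made precise in the paper as Lemmas~\ref{le:lemma1} and \ref{le:lemma2} (each substitution matrix is a $(0,1)$-matrix with row sums at most $2$), and that information alone only yields $\rho\leq 2$, i.e.\ the trivial bound $\log2/\log3$. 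To reach $\sqrt3$ the paper embeds all the matrices $M_q^k$ as principal submatrices of one explicit matrix $N^q$ indexed by the constant terms $s/m^q$, $s\in\{-1,\dots,m^q\}$, determines its exact pattern \eqref{eq:matrix_n} (nonzero entries only at $s\mapsto 3s$, $3s+2$, $3s-2m^q$, $3s-2m^q+2$), and then shows by congruence arguments mod $3$ --- using Fermat's little theorem and, repeatedly, the assumption $3\nmid m$ --- that the two families of paired coordinates appearing in $\lVert N^qx\rVert^2$ overlap in such a constrained way that Peter--Paul with $\varepsilon=1/2$ gives $\lVert N^qx\rVert^2\leq3\lVert x\rVert^2$ uniformly in $q$. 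None of this combinatorial input, which is where the arithmetic hypothesis on $m$ actually does its work, appears in your outline.

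There is a second, structural gap: you treat all $m\neq3^k$ uniformly, but the matrix argument just described requires $3\nmid m$ --- the disjointness claims in the proof of Lemma~\ref{le:lemma3} break down when $3\mid m$ (indeed for $m=3$ one checks $\rho(N^q)=2$). For $m=3^kc$ with $c$ not divisible by $3$ and $k\geq1$ (e.g.\ $m=6$), the paper needs a separate reduction: setting $\tilde T(x)=cx\bmod1$, it uses the self-similarity of $\C$ to show that the intersection of $T^{-q}(\C)\cap\C$ with each of the $2^{kq}$ connected components of $\C_{kq}$ is a scaled copy of $\tilde T^{-q}(\C)\cap\C$, whence $\dim_B(T^{-q}(\C)\cap\C)=\dim_B(\tilde T^{-q}(\C)\cap\C)$, reducing to the coprime case. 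Your plan has no counterpart to this step, so as written it would not cover $m=6$, $m=12$, or any other multiple of $3$ that is not a pure power of $3$.
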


The rest of this subsection is dedicated to the proof of Proposition~\ref{prop:dimension}.  We start by noting that, for any $q$ integer,
$$ T^{q}(x)=m^{q}x \mod 1 .$$
Therefore, the set $T^{-q}(\mathcal{C})$ is a union of sets formed by taking the preimage of $\mathcal{C}$ by each one of the branches of $T^{q}.$ This implies that the functions $g$ of interest to us to start the algorithm described in Theorem \ref{th:maclure} are of the form
\begin{equation}
\label{eq:form_g}
g=\frac{1}{m^q}x+b_g,
\end{equation}
where $b_g$ is of the form $k/(m^q)$ with $k$ an integer less than $m^q$.
The algorithm leads to the construction of $m^{q}$ sets of functions, which we denote by $S_{q}^{k}$ for $k\in\{0,\ldots,m^{q}-1\}$, depending on the constant term of the function $g$ that initiates the algorithm. Each of the sets $S_{q}^{k}$ yields a substitution matrix, $M^{k}_{q}$, associated with the respective Digraph IFS.

Define the functions $f_i=x/3 +b_i$, where $b_i$ is either $0$ or $2/3$. This set of functions forms an IFS whose attractor is the ternary Cantor set $\mathcal{C}$.

For a given $q$ and $k$, the functions $h=f_{i}^{-1}gf_j$ that belong to the set $S_{q}^{k}$, are of the form,
$$ f_{i}^{-1}gf_j=\frac{1}{m^q}x+3\left(\frac{1}{m^q}b_j+b_g-b_i\right) ,$$
where $g$ already belongs to the set in question. Hence, for $h$ to belong to $S_{q}^{k}$, it is necessary that its constant term satisfies
\begin{equation}
\label{eq:belong_s}
3\left(\frac{1}{m^q}b_j+b_g-b_i\right)\in \left\{\frac{-1}{m^q},0,\frac{1}{m^q},\frac{2}{m^q},\ldots,1\right\}.
\end{equation}
Therefore, all the functions in $S^{k}_{q}$ are of the form,
$$ \frac{1}{m^q}x+\frac{s}{m^q},$$
where $s\in\{-1,0,\ldots,m^{q}\}$.

For better understanding, we divide the characterization of the matrices $M_{q}^{k}$ into the following smaller results.
\begin{lemma}
\label{le:lemma1}
Let $q\in\N_0$ and $k\in\{0,\ldots,m^{q}-1\}$, then, every entry of the matrix $M_{q}^{k}$  is either $0$ or $1$.
\end{lemma}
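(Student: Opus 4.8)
The plan is to identify the entry $(M_q^k)_{g,h}$ with the number of indices $i\in\{1,2\}$ for which there exists some $j\in\{1,2\}$ satisfying $h=f_i^{-1}gf_j$, exactly as in the description of the Digraph IFS substitution matrix, and to show that this count can never reach $2$. Thus the core of the argument is to rule out that the two distinct generating branches $f_1$ and $f_2$ simultaneously carry an edge from a fixed vertex $g$ to the same vertex $h$.

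First I would recall the data already fixed above: the generating IFS is $f_1(x)=x/3$ and $f_2(x)=x/3+2/3$, so $b_1=0$ and $b_2=2/3$, while every element of $S_q^k$ has the normal form $\frac{1}{m^q}x+\frac{s}{m^q}$ and is hence determined by its constant term alone. Using the explicit composition
$$
f_i^{-1}gf_j=\frac{1}{m^q}x+3\left(\frac{1}{m^q}b_j+b_g-b_i\right),
$$
the equation $f_i^{-1}gf_j=h$ collapses to a single scalar condition on constant terms, namely $\tfrac13 b_h=\tfrac{1}{m^q}b_j+b_g-b_i$.

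Next I would compare the two potential branches. Suppose, for a contradiction, that both $i=1$ and $i=2$ contribute, with respective witnesses $j_1$ and $j_2$ satisfying $f_1^{-1}gf_{j_1}=h=f_2^{-1}gf_{j_2}$. Equating the two scalar conditions and cancelling the common term $b_g$ gives $\frac{b_{j_1}-b_{j_2}}{m^q}=b_1-b_2=-\frac23$, so that $b_{j_1}-b_{j_2}=-\frac{2m^q}{3}$. But $b_{j_1},b_{j_2}\in\{0,2/3\}$ forces $|b_{j_1}-b_{j_2}|\le\frac23$, which is incompatible with $\frac{2m^q}{3}\ge\frac43$ as soon as $m^q\ge2$. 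Hence at most one value of $i$ can occur, and the entry $(M_q^k)_{g,h}$ is at most $1$. (The same inequality also shows that, for each admissible $i$, the scalar condition determines $b_j$ and hence $j$ uniquely, so the entry faithfully counts edges.)

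I do not anticipate a real obstacle: the statement is elementary bookkeeping of translation constants, and its entire content is the observation that the ratio $1/3$ of the generating IFS and the ratio $1/m^q$ of $g$ are mismatched enough that two different branches can never be reconciled once $m^q>1$. The only delicate point is the boundary value $q=0$, where $m^q=1$ and the above inequality degenerates; there $S_0^0=\{\mathrm{id}\}$ and the Digraph IFS merely reproduces the two-map IFS generating $\C$, a case irrelevant to the estimation of $\dim_B(T^{-q}(\C)\cap\C)$ for $q\in\N$ in Proposition~\ref{prop:dimension}. In the regime $q\ge1$ relevant there (where $m\ge2$), the argument above yields the claim.
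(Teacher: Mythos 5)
Your proof is correct and follows essentially the same route as the paper: you equate the constant terms of $f_{1}^{-1}gf_{j_1}$ and $f_{2}^{-1}gf_{j_2}$ and derive an arithmetic contradiction, merely compressing the paper's four-case analysis into the single inequality $|b_{j_1}-b_{j_2}|\le \frac{2}{3}<\frac{2m^{q}}{3}$. Your remark on the boundary case $q=0$ is in fact a point where you are more careful than the paper, whose fourth case asserts that $1=1/m^{q}$ is absurd --- which fails precisely when $m^{q}=1$, where the entry at $(\mathrm{id},\mathrm{id})$ genuinely equals $2$; since the lemma is only needed for $q\ge 1$ (and $m\ge 2$), this does not affect Proposition~\ref{prop:dimension}.
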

\begin{proof}
Fix $q$, $k$ and let $g$ be a function in $S_{q}^{k}$. As seen in \eqref{eq:belong_s}, any other function $h$ that belongs to the set $S_{q}^{k}$
must be equal to
$$ h=\frac{1}{m^q}x+b_h ,$$
where $b_h$ is of the form $s/(m^q)$ with $s\in\{-1,\dots,m^q\}$. To prove the Lemma, we will need to address two different cases, each with two different possibilities:
\begin{itemize}
  \item If $h=f_{1}^{-1}gf_2$ then  $h\neq f_{2}^{-1}gf_1,$\\
  \item If $h=f_{1}^{-1}gf_2$ then  $h\neq f_{2}^{-1}gf_2,$\\
  \item If $h=f_{1}^{-1}gf_1$ then  $h\neq f_{2}^{-1}gf_1,$\\
  \item If $h=f_{1}^{-1}gf_1$ then  $h\neq f_{2}^{-1}gf_2.$\\
\end{itemize}
For the first case, assume that  $h=f_{1}^{-1}gf_2$ and $h=f_{2}^{-1}gf_1$. Then, we would obtain
$$ \frac{1}{m^q}x+3\left(\frac{1}{m^q}b_1+b_g-b_2\right)=\frac{1}{m^q}x+3\left(\frac{1}{m^q}b_2+b_g-b_1 \right). $$
Since $b_1=0$ and $b_2=2/3$, we are led to $-1=\frac{1}{m^{q}}, $
which is an absurd.

Consider that $h=f_{1}^{-1}gf_2$ and $h=f_{2}^{-1}gf_2$.
Then,
$$ \frac{1}{m^q}x+3\left(\frac{1}{m^q}b_1+b_g-b_2\right)=\frac{1}{m^q}x+3\left(\frac{1}{m^q}b_2+b_g-b_2 \right), $$
and $2/m^{q}=0$, which is an absurd.

Consider that $h=f_{1}^{-1}gf_1$ and $h=f_{2}^{-1}gf_1$.
Then,
$$ \frac{1}{m^q}x+3\left(\frac{1}{m^q}b_1+b_g-b_1\right)=\frac{1}{m^q}x+3\left(\frac{1}{m^q}b_2+b_g-b_1 \right), $$
and $2/m^{q}=0$ which is, again, an absurd.

For last case, assume that $h=f_{1}^{-1}gf_1$ and $h=f_{2}^{-1}gf_2$ and observe that
$$ \frac{1}{m^q}x+3\left(\frac{1}{m^q}b_1+b_g-b_1\right)=\frac{1}{m^q}x+3\left(\frac{1}{m^q}b_2+b_g-b_2 \right)$$
implies $1=1/m^q$, which is an absurd and the Lemma is proved.
\end{proof}

\begin{lemma}
\label{le:lemma2}
Let $q\in\N_0$ and $k\in\{0,\ldots,m^{q}-1\}$, then the sum of the elements of each row of the matrix $M_{q}^{k}$ is at most $2$.
\end{lemma}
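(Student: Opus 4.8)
The plan is to compute the row of $M_{q}^{k}$ indexed by an arbitrary vertex directly from the four candidate maps produced by the algorithm, and then to use the membership constraint \eqref{eq:belong_s} to discard all but two of them. Fix $q\ge1$, an index $k$, and a vertex $g\in S_{q}^{k}$, written $g(x)=\frac{1}{m^{q}}x+b_{g}$. The only vertices to which $g$ can be joined by an edge are among the four maps $h_{i,j}:=f_{i}^{-1}gf_{j}$, with $i,j\in\{1,2\}$, whose explicit form was recorded above,
\[
h_{i,j}(x)=\frac{1}{m^{q}}x+3\Bigl(\frac{1}{m^{q}}b_{j}+b_{g}-b_{i}\Bigr).
\]
Because an edge is labelled only by the contraction $f_{i}$ that is used, and since $h_{i,1}\neq h_{i,2}$ for each fixed $i$, summing the entries along the row indexed by $g$ yields $|V_{1}|+|V_{2}|$, where $V_{i}$ is the set of distinct maps among $\{h_{i,1},h_{i,2}\}$ that belong to $S_{q}^{k}$. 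In particular the row sum is bounded above by the number of pairs $(i,j)$ for which $h_{i,j}\in S_{q}^{k}$, since $|V_{i}|$ cannot exceed the number of admissible values of $j$.

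The crucial step is to pair the four candidates by the value of $j$ rather than by $i$. For a fixed $j$, the constant terms of $h_{1,j}$ and $h_{2,j}$ are
\[
3\Bigl(\frac{b_{j}}{m^{q}}+b_{g}\Bigr)\qquad\text{and}\qquad 3\Bigl(\frac{b_{j}}{m^{q}}+b_{g}\Bigr)-2,
\]
so they differ by exactly $2$ (here I use $b_{2}-b_{1}=\tfrac{2}{3}$ together with the outer factor $3$). On the other hand, by \eqref{eq:belong_s} any $h\in S_{q}^{k}$ has constant term in $\{-1/m^{q},0,1/m^{q},\ldots,1\}$, hence inside the interval $[-1/m^{q},1]$, whose length is $1+1/m^{q}$. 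Since $q\ge1$ forces $m^{q}\ge m\ge2$, this length is at most $3/2<2$, and two real numbers differing by $2$ cannot both lie in an interval of length strictly smaller than $2$.

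Consequently, for each of the two values of $j$ at most one of $h_{1,j},h_{2,j}$ belongs to $S_{q}^{k}$, so there are at most two admissible pairs $(i,j)$ in total; combined with the bound from the first paragraph this gives that the row sum is at most $2$, as asserted. I emphasise that pairing by $i$ would be useless: within a fixed group $i$ the two constant terms differ only by $2/m^{q}$, which is tiny, so both can be admissible, and it is precisely the large gap $2$ between the two choices of $i$ (for a common $j$) that does the work.

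The point I expect to require the most care is the bookkeeping in the first paragraph: since the matrix records an edge only through its label $f_{i}$, one must verify that the row sum genuinely equals $|V_{1}|+|V_{2}|$ and that this quantity is dominated by the number of admissible pairs. The companion Lemma~\ref{le:lemma1}, stating that each entry is $0$ or $1$, is fully consistent with and clarifies this count, as it shows that no single target is reached by two distinct labels. Finally, the length estimate degenerates in the borderline case $q=0$, where $[-1/m^{q},1]$ has length exactly $2$; there, however, $S_{0}^{0}$ reduces to $\{\id\}$ and the unique row sum is seen to equal $2$ by direct inspection, so the bound persists.
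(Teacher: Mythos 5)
Your proof is correct and follows essentially the same route as the paper: you group the four candidate maps $f_{i}^{-1}gf_{j}$ by the common value of $j$, note that the two choices of $i$ yield constant terms differing by exactly $2$, and conclude via \eqref{eq:belong_s} that at most one map per pair can belong to $S_{q}^{k}$, which is precisely the paper's case analysis. Your explicit handling of the borderline case $q=0$, where the interval $\left[-1/m^{q},1\right]$ has length exactly $2$ and the paper's ``$2$ is larger than the length'' claim degenerates, is a small refinement the paper glosses over, but it does not alter the argument.
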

\begin{proof}
Consider a function $g$ in $S_{q}^{k}$.
As seen before, any other function $h\in S_{q}^{k}$ must be of the form
$$ h=\frac{1}{m^q}x+b_h ,$$
where $b_h=s/(m^q)$ with $s\in\{-1,\ldots,m^q\}$.
According to the possible values of $b_i$ and $b_j$ there are four different possibilities for the line of $M_{q}^{k}$ indexed by $g$ to have entries equal to $1$. We will prove that these cases form two disjoint groups of two elements, which will prove the claim of the Lemma.

Assume that $b_i=0$ and $b_j=0$ and that $h=f_{i}^{-1}gf_j$  belongs to $S_{q}^{k}$.
By \eqref{eq:belong_s}, the constant term of $h$ satisfies
\begin{equation}
\label{eq:lema2_2}
3b_g \in \left\{\frac{-1}{m^q},0,\frac{1}{m^q},\frac{2}{m^q},\ldots,1\right\}.
\end{equation}
By contradiction, assume that $h^{*}=f_{i}^{-1}gf_j$ belongs to $S_{q}^{k}$ with $b_i=2/3$ and $b_j=0$.
Again, by \eqref{eq:belong_s}, the constant term of $h^{*}$ satifies
\begin{equation}
\label{eq:lema2_3}
3b_g-2 \in \left\{\frac{-1}{m^q},0,\frac{1}{m^q},\frac{2}{m^q},\ldots,1\right\}.
\end{equation}
Since $2$ is larger than the length of the interval $\left[\frac{-1}{m^q},1\right]$, for any $m$ considered, then the two conditions, \eqref{eq:lema2_2} and \eqref{eq:lema2_3}, cannot be simultaneously fulfilled for any $b_g$ and the two cases are therefore necessarily disjoint.
Now, assume that $b_i=0$ and $b_j=2/3$ and that $h=f_{i}^{-1}gf_j$  belongs to $S_{q}^{k}$.\\
By \eqref{eq:belong_s}, we obtain that the constant term of $h$ satisfies
\begin{equation}
\label{eq:lema2_4}
\frac{2}{m^q}+3b_g \in \left\{\frac{-1}{m^q},0,\frac{1}{m^q},\frac{2}{m^q},\ldots,1\right\}.
\end{equation}
Assume further that $h^{*}=f_{i}^{-1}gf_j$ belongs to $S_{q}^{k}$ with $b_i=2/3$ and $b_j=2/3$.
Again, by \eqref{eq:belong_s}, the constant term of $h^{*}$ satisfies
\begin{equation}
\label{eq:lema2_5}
\frac{2}{m^q}+3b_g-2 \in \left\{\frac{-1}{m^q},0,\frac{1}{m^q},\frac{2}{m^q},\ldots,1\right\}.
\end{equation}
Since $2$ is larger than the length of the interval $\left[\frac{-1}{m^q},1\right]$, for any possible $m$, then the two conditions, \eqref{eq:lema2_4} and \eqref{eq:lema2_5}, cannot be simultaneously fulfilled for any $b_g$ and the two cases are, again, disjoint and the Lemma is proved.
\end{proof}

\noindent Lemmas \ref{le:lemma1} and \ref{le:lemma2} allow us to caracterize the substitution matrices $M_{q}^{k}$. Each matrix $M_{q}^{k}$ is a $(0,1)$-matrix, whose spectral radius is less or equal to $2$. We will show that, in fact, the spectral radius is strictly less than $2$.
In order to do that, we will consider a matrix $N^q$. This matrix will correspond to the substitution matrix of the Digraph IFS, $D$, whose nodes are all possible functions of the form
$$ \frac{1}{m^q}x+\frac{s}{m^q}, $$
where $s\in\{-1,\ldots,m^q\}$. The Digraph IFS, $D$, has an edge from a node $g$ to a node $h$ if
\begin{equation}
\label{eq:relation}
h=f_{i}^{-1}gf_{j},
\end{equation}
which implies that the entry $(g,h)$ of the matrix $N^q$ will be different from zero.

Note that, due to relation \eqref{eq:relation}, then, relation \eqref{eq:belong_s} holds for the constant term of $h$ and Lemmas \ref{le:lemma1} and \ref{le:lemma2} apply to the matrix $N^{q}$ without any change in the respective proof. So, $N^q$ is a $(0,1)$-matrix whose row entries sum at most $2$.

Furthermore, under these assumptions, the matrices $M_{q}^{k}$ are principal submatrices of $N^q$, which means that if we are able to bound the spectral radius of $N^q$ away from $2$, uniformly on $q$, then, by \eqref{eq:raio-sub-raio}, the same will apply to $M_{q}^{k}$.

In what follows, we will use the notation $a \equiv b \mod p$, to express the fact that $a$ and $b$ are congruent modulo $p$.

\begin{lemma}
\label{le:lemma3}
Assume that $m$ in the definition of $T$, in \eqref{eq:dynamics}, is such that $m$ is not divisible by $3$, \ie $m\not\equiv 0 \mod 3$. Then, the matrix $N^{q}$ has a spectral radius less or equal to $\sqrt{3}$, \ie $$\rho(N^{q})\leq \sqrt 3.$$
\end{lemma}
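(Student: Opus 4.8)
The plan is to combine the elementary estimate $\rho(N^{q})\le\|N^{q}\|_{2}$ from \eqref{eq:raio-norma} with a decomposition of $N^{q}$ into two \emph{sub-permutation} matrices (at most one nonzero entry per row and per column), the interaction of which will be controlled by Proposition~\ref{prop:peter_paul}. Indexing each node of the Digraph IFS by the integer $s\in\{-1,\dots,m^{q}\}$ appearing in its constant term $s/m^{q}$, the computation of $f_{i}^{-1}gf_{j}$ preceding Lemma~\ref{le:lemma1} shows that the edge out of $s$ produced by $h=f_{i}^{-1}gf_{j}$ lands at
$$
s'\in\{\,3s,\ 3s+2,\ 3s-2m^{q},\ 3s+2-2m^{q}\,\},
$$
where the inner map $f_{1}$ (i.e. $b_{j}=0$) yields the targets $3s$ and $3s-2m^{q}$, while $f_{2}$ (i.e. $b_{j}=2/3$) yields $3s+2$ and $3s+2-2m^{q}$. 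Accordingly I would write $N^{q}=P+Q$, where $P$ collects the edges coming from $f_{1}$ and $Q$ those coming from $f_{2}$. The two groups treated in the proof of Lemma~\ref{le:lemma2} are exactly the $f_{1}$- and $f_{2}$-edges, so Lemma~\ref{le:lemma2} already gives that $P$ and $Q$ have at most one nonzero entry per row.

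The first genuine step is to show that $P$ and $Q$ also have at most one nonzero entry per column. For a fixed target $t$, the two possible $f_{1}$-sources solve $t=3s$ and $t=3s-2m^{q}$, so they require $t\equiv0$ and $t\equiv m^{q}\pmod 3$ respectively; since $3\nmid m$ we have $m^{q}\not\equiv0\pmod3$, so these two residue classes are distinct and at most one source is an integer in range. The same check applies to the two $f_{2}$-sources ($t\equiv2$ and $t\equiv2+m^{q}$). This is precisely where the hypothesis $m\not\equiv0\pmod3$ is indispensable. Hence $P$ and $Q$ are sub-permutation matrices, $P^{\top}P$ and $Q^{\top}Q$ are diagonal $(0,1)$-matrices, and $(P^{\top}P+Q^{\top}Q)_{t,t}$ equals the in-degree of $t$, which the same congruence count bounds by $2$, in-degree $2$ occurring only for $t$ in one fixed residue class $c$ modulo $3$ (namely $c=0$ if $m^{q}\equiv1$ and $c=2$ if $m^{q}\equiv2$).

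By \eqref{eq:raio-norma} it suffices to prove $\|(P+Q)x\|_{2}^{2}\le 3\|x\|_{2}^{2}$ for every $x$. Expanding and using that each out-degree-$2$ node $s$ has its two targets equal to $a_{s}$ and $a_{s}+2$ with $a_{s}\in\{3s,\,3s-2m^{q}\}$,
$$
\|(P+Q)x\|_{2}^{2}=\sum_{t}\mathrm{indeg}(t)\,x_{t}^{2}\;+\;2\!\!\sum_{s:\,\mathrm{outdeg}(s)=2}\!\! x_{a_{s}}\,x_{a_{s}+2}.
$$
Each cross term links two nodes differing by $2$, hence in distinct residue classes, and a further congruence check (again using $3\nmid m$) shows that exactly one member of each such pair lies in the central class $c$, that every node of class $c$ lies in at most two cross terms, and that every node not in class $c$ lies in at most one. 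Thus the cross-link graph is a disjoint union of paths of length at most two, centred at the in-degree-$2$ nodes. I would then apply Proposition~\ref{prop:peter_paul} with $\varepsilon=2$ to each cross term $2x_{a_{s}}x_{a_{s}+2}$, charging the weight $\tfrac12$ to the central node and the weight $2$ to the peripheral one. Summing, every central node accumulates diagonal weight $\le2$ plus at most $2\times\tfrac12=1$ from its incident cross terms, and every peripheral node accumulates diagonal weight $\le1$ plus $2$ from its unique cross term; in both cases the coefficient of $x_{t}^{2}$ is at most $3$. This gives $\|(P+Q)x\|_{2}^{2}\le3\|x\|_{2}^{2}$, hence $\rho(N^{q})\le\|N^{q}\|_{2}\le\sqrt3$.

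The main obstacle is the congruence bookkeeping of the middle two steps: establishing that $3\nmid m$ forces $P$ and $Q$ to be true sub-permutation matrices, and that the in-degree-$2$ vertices form a single residue class whose $\pm2$-links close up into paths of length at most two. Once this structure is in hand, the choice $\varepsilon=2$ in Proposition~\ref{prop:peter_paul} is exactly tuned so that both the central and peripheral coefficients saturate at $3$; indeed the extremal local configuration is the weighted path with diagonal $(1,2,1)$ and unit off-diagonals, whose largest eigenvalue is precisely $3$, which is what produces the sharp constant $\sqrt3$.
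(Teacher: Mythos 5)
Your proposal is correct and takes essentially the same route as the paper's proof: both bound $\rho(N^{q})$ by $\lVert N^{q}\rVert_{2}$ via \eqref{eq:raio-norma}, expand $\lVert N^{q}x\rVert_{2}^{2}$ into diagonal terms plus cross terms $2x_{a_{s}}x_{a_{s}+2}$, use congruences modulo $3$ (the only place $3\nmid m$ enters) to show the cross-term overlap pattern is a disjoint union of paths of length at most two, and then invoke Proposition~\ref{prop:peter_paul} with exactly the paper's tuning --- your $\varepsilon=2$ in $2ab\le a^{2}/\varepsilon+\varepsilon b^{2}$ is the paper's $\varepsilon=0.5$ in $(x_{i}+x_{i+2})^{2}\le(1+\varepsilon)x_{i}^{2}+(1+1/\varepsilon)x_{i+2}^{2}$, and both yield the sharp coefficient $3$ for central and peripheral coordinates alike. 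Your $N^{q}=P+Q$ sub-permutation decomposition with in/out-degree and residue-class bookkeeping is a tidier, case-free repackaging of the paper's explicit description \eqref{eq:matrix_n} and its index sets $\mathcal{A}$, $\mathcal{B}$, and it glosses the same point the paper also only asserts (``Note that there is no $i$ such that\ldots''), namely that an out-degree-$2$ row cannot pair an unwrapped target $3s$ or $3s+2$ with a wrapped one $3s-2m^{q}$ or $3s+2-2m^{q}$, which follows from a one-line range check on $\{-1,\ldots,m^{q}\}$.
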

\begin{proof}
Let $g$ be a function of the form
$$ \frac{1}{m^q}x+\frac{s}{m^q},  $$
for $s\in\{-1,\ldots,m^q\}$. If the entry, $(g,h)$, of $N^q$ is different from zero, then $h=f_{i}^{-1}gf_{j}$ and relation \eqref{eq:belong_s} holds, which means that the constant term of $h$ satisfies
$$3\left(\frac{1}{m^q}b_j+b_g-b_i\right)\in \left\{\frac{-1}{m^q},0,\frac{1}{m^q},\frac{2}{m^q},\ldots,1\right\}. $$
Depending on the value of $b_i$ and $b_j$, we have four different cases.

If $b_j=0$ and $b_i=2/3$, then, the constant term of $h$ satisfies $$\frac{3s-2m^{q}}{m^q}\in \left\{\frac{-1}{m^q},0,\frac{1}{m^q},\frac{2}{m^q},\ldots,1\right\}.$$

If $b_j=0$ and $b_i=0$, then, the constant term of $h$ satisfies $$\frac{3s}{m^q}\in \left\{\frac{-1}{m^q},0,\frac{1}{m^q},\frac{2}{m^q},\ldots,1\right\}.$$

If $b_j=2/3$ and $b_i=0$, then, the constant term of $h$ satisfies  $$\frac{3s+2}{m^q}\in \left\{\frac{-1}{m^q},0,\frac{1}{m^q},\frac{2}{m^q},\ldots,1\right\}.$$

If $b_j=2/3$ and $b_i=2/3$, then, the constant term of $h$ satisfies $$\frac{3s-2m^{q}+2}{m^q}\in\left\{\frac{-1}{m^q},0,\frac{1}{m^q},\frac{2}{m^q},\ldots,1\right\}.$$

Up to this point,  every entry of the matrix is indexed by functions of the form 
$ \frac{1}{m^q}x+\frac{s}{m^q}, $ with $s\in\{-1,\ldots,m^q\}$. Hence, we can associate to the entry of the matrix $(g,h)$ the index $(s,s^{*})$, where $s$ and $s^{*}$ are the numerators of the constant terms of $g$ and $h$, respectively.
An entry $(s,s^{*})$ of $N^{q}$ is nonzero if and only if $s$ is such that one the above cases is verified.

If the first case occurs, then $3s-2m^q\in \{-1,\ldots,m^q\} $. Hence, if  $N^{q}_{ss^{*}}\neq 0$, we have that  $s^{*}=3s-2m^{q}$.
If the second case occurs, then $3s\in \{-1,\ldots,m^q\}$ and if $N^{q}_{ss^{*}}\neq 0$ then $s^{*}=3s$.
For the third case, we need $3s+2$ to belong to the set $ \{-1,\ldots,m^q\} $ and if $N^{q}_{ss^{*}}\neq 0$ then $s*=3s+2$.
If the last case is verified, then  $3s-2m^{q}+2$ belongs to $ \{-1,\ldots,m^q\} $ and if $N^{q}_{ss^{*}}\neq 0$ then $s^*=s-2m^{q}+2$.

Changing the indices for the more usual set $\{1,\ldots,m^{q}+2\}$, we obtain that $N^q$ can be characterized by
\begin{equation}
\label{eq:matrix_n}
\begin{cases} N^{q}_{i,3i-2}=1 & \mbox{if } i,3i-2 \in\{1,\ldots,m^{q}+2\} \\
N^{q}_{i,3i-4}=1 & \mbox{if } i,3i-4\in\{1,\ldots,m^{q}+2\} \\
N^{q}_{i,3i-2m^{q}-4}=1 & \mbox{if } i,3i-2m^{q}-4\in\{1,\ldots,m^{q}+2\}\\
N^{q}_{i,3i-2m^{q}-2}=1 & \mbox{if } i,3i-2m^{q}-2\in\{1,\ldots,m^{q}+2\} \\
N^{q}_{i,j}=0 & \mbox{otherwise. }
\end{cases}
\end{equation}
For a more visual representation of $N^{q}$, we may write:
\setcounter{MaxMatrixCols}{20}
\begin{equation}
\label{mt:matrix_n}
N^q=
\begin{pmatrix}
1 & 0 & 0 & \ldots & \ldots & \ldots & \ldots & 0 & 0 & 0 & 0 \\
0 & 1 & 0 &  1 & 0 & \ldots & \ldots & \ldots & 0 & 0 & 0 \\
0 & 0 & 0 & 0 & 1 & 0 & 1 & 0 &\ldots & 0 & 0 \\
0 & 0 & 0 & 0 & 0 & 0 & 0 & 1 & 0 & 1 & \ldots \\
\vdots & \vdots & \vdots  & \vdots  & \vdots  & \vdots  & \vdots  & \vdots  & \vdots  & \vdots  & \vdots \\
0 & 0 & 0 & 0 & 0 & \ldots & 0 & 0 & 0 & 0 & 0 \\
\vdots & \vdots & \vdots  & \vdots  & \vdots  & \vdots  & \vdots  & \vdots  & \vdots  & \vdots  & \vdots \\
0 & 0 & 0 & 0 & 0 & \ldots & 0 & 0 & 0 & 0 & 0 \\
\vdots & \vdots & \vdots  & \vdots  & \vdots  & \vdots  & \vdots  & \vdots  & \vdots  & \vdots  & \vdots \\
\ldots & 1 & 0 & 1 & 0 & 0 & 0 & 0 & 0 & 0 & 0 \\
0 & 0 & \ldots & 0 & 1 & 0 & 1 & 0 & 0 & 0 & 0 \\
0 & 0 & 0 & \ldots & \ldots & \ldots & 0 & 1 & 0 & 1 & 0 \\
0 & 0 & 0 & 0 & \ldots & \ldots & \ldots & \ldots & 0 & 0 & 1
\end{pmatrix}.
\end{equation}
The shape of the matrix $N^{q}$ will depend on how many sequences of $(1, 0, 1)$ will fit in $m^q+1$ columns. Since $m$ is not divisible by $3$, by Fermat's Little Theorem, 
we have
$m^2\equiv 1 \mod 3 .$
Let $q=2k$, for some $k\in\N$, then
$$m^{2k}+1\equiv 2 \mod 3.$$
If $q=2k+1$, for some $k\in\N$, then we have
$m^{2k+1}+1\equiv m+1 \mod 3 .$
Hence, in this case
$$m^{2k+1}+1\equiv 0 \mod 3 \text{  or  } m^{2k+1}+1\equiv 2 \mod 3, $$
depending on whether $m\equiv 2 \mod 3$ or $m\equiv 1 \mod 3$, respectively.

This means that we have two different cases to address either $m^{q}+1\equiv 0 \mod 3$ or $m^{q}+1\equiv 2 \mod 3$.

Let $q$ be such that $m^q+1 \equiv 2 \mod 3$. Denote by $x=(x_1,x_2,\ldots,x_{m^{q}+2})$ a vector in $\R^{m^q+2}$.
Note that there is no $i$ such that $ N^{q}_{i,3i-2}=1$ and $N^{q}_{i,3i-2m^{q}-2}=1$, in conjunction, or $N^{q}_{i,3i-2}=1$ and $N^{q}_{i,3i-2m^{q}-4}=1$, together. Similarly, there is also no $i$ such that $N^{q}_{i,3i-4}=1$ and $N^{q}_{i,3i-2m^{q}-4}=1$, together, or $N^{q}_{i,3i-4}=1$ and $N^{q}_{i,3i-2m^{q}-2}=1$.
Hence, we can write
\begin{equation*}
N^{q}x=
  \begin{pmatrix}
  x_1\\
  x_2+x_4\\
  \ldots\\
  x_{2+3\alpha}+x_{4+3\alpha}\\
  x_{m^q+1}\\
  0\\
  \ldots\\
  0 \\
  x_2\\
  x_{m^q-1-3\beta}+x_{m^q+1-3\beta}\\
  \ldots\\
  x_{m^q-1}+x_{m^q+1}\\
  x_{m^q+2}
  \end{pmatrix},
\end{equation*}
where $\alpha,\beta$ are integers that satisfy $0<\alpha,\beta<(m^q+1)/3$.

Consider the sets
$$\mathcal{A}=\left\{ i\in\N:i=2+3\alpha \text{ and } 0<\alpha<(m^q+1)/3 \right\} $$
and
$$\mathcal{B}=\left\{ i\in\N:i=m^q-1-3\beta \text{ and } 0<\beta<(m^q+1)/3 \right\} .$$
Then, $\lVert N^q x\rVert^{2}$ can be written as
\begin{equation}
\label{eq:sum_1}
 x_{1}^{2}+x_{2}^{2}+\sum_{i\in\mathcal{A}}(x_i+x_{i+2})^{2}+\sum_{j\in\mathcal{B}}(x_j+x_{j+2})^{2}+x_{m^q+1}+x_{m^q+2}.
\end{equation}
For simplicity, let
$$A:= \sum_{i\in\mathcal{A}}(x_i+x_{i+2})^{2}$$
and
$$B:= \sum_{i\in\mathcal{B}}(x_j+x_{j+2})^{2}.$$
Each coordinate of the vector $x$ appears, at most, once in $A$ and once in  $B$.
Let $i^{*}$ be such that, $x_{i^*}$ appears both in $A$ and $B$ and assume that $ i^{*}\in\mathcal{A}$. Then $i^{*}=j+2$ for some $j$ in $\mathcal{B}$.
To prove this, we proceed by contradiction. Assume that $i^*=j$ for some $j\in \mathcal{B}.$
Then,
$$ 2+3\alpha=m^q-1-3\beta, $$
for some integers $\alpha,\beta.$ But, this implies that $3$ divides $m$ which is an absurd.
With a similar argument, we prove that, if $i^{*}=i+2$, for some $i\in\mathcal{A}$, then $i^*=j$ for some $j\in\mathcal{B}$.

Now, let $i^{*}$ be such that $x_{i^*}$ appears in $A$ and in $B$ and assume that $i^*\in\mathcal{A}$. Then $i^*=j+2$ for some $j\in\mathcal{B}$. We will show that $x_{i^*+2}$ does not appear more than once in \eqref{eq:sum_1}.
If $x_{i^*+2}=x_1$ or $x_{i^*+2}=x_3$, then $i^*+2=1 $ or $i^*+2=3 $  and $i^{*}$ cannot belong to $\mathcal{A}$.
On other hand, if $x_{i^*+2}=x_{m^q+2}$ then
$$ i^{*}+2=m^q+2 $$
and for some $\beta\in\N$,
$$m^q+1-3\beta=m^q+2$$
which implies that $3\beta=-1$. This is an absurd. A similar argument can be made to show that $x_{i^*+2}\neq x_{m^q}$.
To finish, assume that exists a $j^{*}\in\mathcal{B}$ such that $i^{*}+2=j^{*}$ or $i^{*}+2=j^{*}+2$. If $i^{*}+2=j^{*}$, then there exist integers $\beta$ and $\beta^{*}$ such that
$$ m^{q}+1-3\beta=m^q-1-3\beta^{*}.$$
Hence, $ 2=3(\beta-\beta^{*}) $
which is also an absurd. If $i^{*}+2=j^{*}+2$, then $j^{*}\in\mathcal{A}$. This is impossible as proved earlier.

Similarly, if $x_{i^*}$ appears in $A$ and $B$ and $i^{*}=i+2$, for some $i\in\mathcal{A}$, then $x_i$ cannot appear more than once in \eqref{eq:sum_1}.

Using Proposition \ref{prop:peter_paul}, we can write that, for any $\varepsilon>0$,
$$ (x_i+x_{i+2})^2\leq(1+\varepsilon)x_{i}^{2}+(1+1/\varepsilon)x_{i+2}^{2} .$$
As proved above and due to the matrix pattern, for $x_l$ to appear in the sum $A$ and $B$ then $l=i$ for some $i\in\mathcal{A}$ and $l=j+2$ for some in $j\in \mathcal{B}$. Hence, for all $\varepsilon>0$,
\begin{multline}
\label{eq:sum_2}
\lVert N^qx\rVert^{2}\leq x_{1}^{2}+x_{2}^{2}+\sum_{i\in\mathcal{A}}(1+\varepsilon)x_{i}^{2}+(1+1/\varepsilon)x_{i+2}^{2}+\\ \sum_{i\in\mathcal{B}}(1+1/\varepsilon)x_j^2+(1+\varepsilon)x_{j+2}^{2}+x_{m^q+1}^2+x_{m^q+2}^2
\end{multline}
and we can establish that there are coefficients $c_l$ such that
\begin{equation}
\label{eq:sum_3}
\lVert N^q x\rVert^{2}\leq\sum_{l=1}^{m^q+2}c_l x_{l}^{2}.
\end{equation}
So, choosing $\varepsilon=0.5$ and if $x_l$ appears in both sums $A$ and $B$, then $c_l=2(1+\varepsilon)=3$. Furthermore, if $x_{l^{*}}$ is another coordinate such that the term $(x_l+x_{l^*})^{2}$ appears only in $A$ or in $B$, then $c_{l^*}=(1+1/\varepsilon)\leq 3$, since $x_{l^*}$ does not appear anywhere else in \eqref{eq:sum_2}.
On other hand, if $x_1 $ appears either in $A$ or $B$ then $c_1$ is equal to $1+1+\varepsilon\leq 3$ and the same conclusion holds for $c_2$, $c_{m^q+1}$ or $c_{m^q+2}$. Hence, for every $1\leq l\leq m^{q}+2$,  we have $c_l\leq 3$ and therefore
$$ \lVert N^q x\rVert^{2}\leq3\sum_{l=1}^{m^q+2} x_{l}^{2}\leq 3 \lVert x\rVert^{2}.$$
Consequently, by \eqref{eq:raio-norma},
$$\rho(N^{q})\leq \sqrt{3} .$$
If $m^{q}+1\equiv 0 \mod 3$, in a very similar way, we obtain
$$ \lVert N^q x\rVert^{2}=x_{1}^{2}+\sum_{i\in\mathcal{A}}(x_i+x_{i+2})^{2}+\sum_{j\in\mathcal{B}}(x_j+x_{j+2})^{2}+x_{m^q+2}. $$
Then using the inequality
\begin{multline*}
\lVert N^qx\rVert^{2}\leq x_{1}^{2}+\sum_{i\in\mathcal{A}}(1+1/\varepsilon)x_{i}^{2}+(1+\varepsilon)x_{i+2}^{2}+ \sum_{j\in\mathcal{B}}(1+\varepsilon)x_j+(1+1/\varepsilon)x_{j+2}^{2}+x_{m^q+2},
\end{multline*}
with $\varepsilon=0.5$, the proof follows for all $q$.
\end{proof}

\begin{remark}
We point out that each of the Digraph IFS associated to the intersection $T^{-q}(\mathcal{C})\cap\mathcal{C}$ satisfies the open set condition. Each digraph is composed of only two different similarities, $x/3$ and $x+2/3$. Hence, choosing $\Omega_v=(0,1)$ for every $v\in V$, we can check that the conditions in Definition~\ref{de:opensetcondition} are easily satisfied.
\end{remark}
\begin{proof}[Proof of Proposition~\ref{prop:dimension}]
Recalling that the matrices $M^{k}_{q}$ of each Digraph IFS associated to the intersection $T^{-q}(\mathcal{C})\cap\mathcal{C}$ are principal submatrices of $N^q$, then \eqref{eq:raio-sub-raio} implies that
$$\rho(M_{q}^{k})\leq\rho(N^q).$$

Hence, if $m$ is not divisible by $3$, Lemma~\ref{le:lemma3} gives us $\rho(M_{q}^{k})\leq\sqrt3$. Consequently, noting that each Digraph IFS associated with a matrix $M^{k}_q$ satisfies the open set condition and is composed of only two different similarities, $x/3$ and $x+2/3$, both with ratio $1/3$, we can apply \cite[Theorem~3 and 4]{MW88} to estimate the Hausdorff dimension of $T^{-q}(\mathcal{C})\cap\mathcal{C}$, namely,
\begin{equation*}
\text{dim}_H(T^{-q}(\mathcal{C})\cap\mathcal{C})\leq\frac{\log \sqrt 3}{-\log 1/3}=\frac12,
\end{equation*}
for all $q\in\N$.
Moreover, one can check that conditions of \cite[Theorems~1.1 and 2.7]{DN04} are satisfied in our setting and therefore $\text{dim}_H(T^{-q}(\mathcal{C})\cap\mathcal{C})=\text{dim}_B(T^{-q}(\mathcal{C})\cap\mathcal{C})$, which allows us to obtain:
\begin{equation}
\label{eq:dimension}
\text{dim}_B(T^{-q}(\mathcal{C})\cap\mathcal{C})\leq\frac{\log \sqrt 3}{-\log 1/3}=\frac12<\frac{\log2}{\log3}=\text{dim}_B(\mathcal{C}).
\end{equation}

So far, $m$ is not divisible by $3$. Using the self-similarity of the Cantor set,  $\mathcal{C}$, it is possible to extend our findings to the cases where $m=3^{k}c$, for some integers $c,k>1$ such that $c$ is not divisible by $3$. Figure \ref{fig:figure_3kc} intends to illustrate our reasoning for the case where $k=1$, $c=2$ and $q=1$.

Consider the map $\tilde T(x)=cx \mod 1$. We claim that $$\text{dim}_{B}(T^{-q}(\mathcal{C})\cap \mathcal{C})=\text{dim}_{B}(\tilde T^{-q}(\mathcal{C})\cap\mathcal{C}).$$
Let $g_\gamma\colon\R\to\R$ be given by $g(x)=\gamma x$ for all $x\in\R$. 
The set $T^{-q}(\mathcal{C})\cap\mathcal{C}$ is obtained by intersecting $3^{kq}c^{q}$ copies of the set $g_{3^{-kq}c^{-q}}(\mathcal{C})$ distributed side by side along the interval $[0,1]$, with the set $\mathcal{C}$. 
\begin{figure}[h]

\begin{tikzpicture}[decoration=Cantor set, line width=1mm,scale=2.5]
\draw decorate{ (0,.27) -- (3,.27) };
\node at (3.17,.27){$\C_{1}$};
\draw (-.017,0) -- (3.019,0);
\node at (3.13,0.1) {$1$};
\node at (-0.13,0.1) {$0$};
\draw (0,0) -- (0,-.1);
\draw (0.5,0) -- (0.5,-.1);
\draw (1,0) -- (1,-.1);
\draw (1.5,0) -- (1.5,-.1);
\draw (2,0) -- (2,-.1);
\draw (2.5,0) -- (2.5, -.1);
\draw (3,0) -- (3.0, -.1);
\draw [dashed, line width=0.5mm] (3, -.05) to  (3.2,-.05);
\draw [->, dashed, line width=0.5mm] (3.2, -.05) to  (3.2,-.25);
\node at (3.2,-.38) {$T^{-1}(\C)$};
\draw [->, dashed, line width=0.5mm] (2.75, -.05) to  (2.75,-.5);
\node at (2.75,-.62){Copy of};
\node at (2.75,-.8){$g_{3^{-1}2^{-1}}(\C)$};
\draw [dashed, line width=0.5mm] (-.019,-.23) to (1.019,-.23);
\draw [dashed, line width=0.5mm]  (-.019,-.23) to  (-.019,-.15);
\draw [dashed, line width=0.5mm]  (1.019,-.23) to  (1.019,-.15);
\draw [->, dashed, line width=0.5mm]  (0.51,-.23) to  (0.51,-.5);
\node at (0.51, -.62){Copy of};
\node at (0.51, -.82){$g_{3^{-1}}(\tilde T^{-q}(\mathcal{C}))$};
\end{tikzpicture}
\caption{For $c=2$ and $k=1$ this figure illustrates the relation between $\tilde T^{-1}(\C)\cap \C$ and $T^{-1}(\C)\cap \C$. \label{fig:figure_3kc}}
\end{figure}
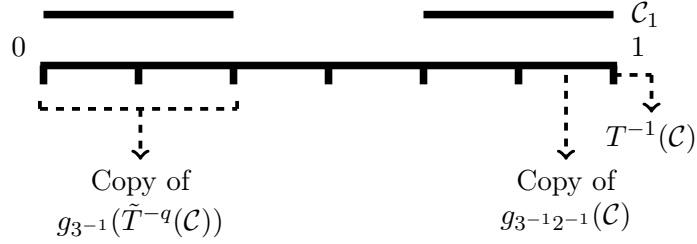

Note that because of the self-similarity of $\C$, the intersection of each of the $2^{kq}$ connected components of $\C_{kq}$ with $\C$ is a copy of $g_{3^{-kq}}(\C)$. Moreover, each of the $2^{kq}$ connected components of $\C_{kq}$ meets exactly $c^{q}$ of the copies of the set $g_{3^{-kq}c^{-q}}(\mathcal{C})$ that constitute $T^{-q}(\mathcal{C})$. Therefore, the intersection of the set $T^{-q}(\mathcal{C})\cap\mathcal{C}$ with each of the $2^{kq}$ connected components of $\C_{kq}$ is a copy of $g_{3^{-kq}}(\tilde T^{-q}(\mathcal{C})\cap\mathcal{C})$ and the claim follows.
\end{proof}

\begin{remark}
We note that when $m=3^k$, for some $k\in\N$, one can check that the matrices $N^q$ have a spectral radius equal to $2$, which means that the box dimension of $T^{-q}(\mathcal{C})\cap\mathcal{C}$ is equal to the box dimension of $\mathcal C$. For example, if $m=3$ and $q=1$ then
$$N^q=\left(
\begin{array}{ccccc}
 1 & 0 & 0 & 0 & 0 \\
 0 & 1 & 0 & 1 & 0 \\
 0 & 0 & 0 & 0 & 1 \\
 0 & 1 & 0 & 1 & 0 \\
 0 & 0 & 0 & 0 & 1 \\
\end{array}
\right),$$ which can be easily checked to have a spectral radius equal to $2$. This is consistent with what we proved in Theorem~\ref{thm:3kx_mod1}.
\end{remark}

\subsection{From dimension estimates to EI estimates}
\label{subsec:dimension-EI}

In this section we show how to make use of the information regarding the irrelevance (in terms of box dimension) of the intersection of $\mathcal C$ with its iterates, in order to compute the EI from O'Brien's formula \eqref{eq:OBriens-formula}. Essentially, we have to translate the difference between the box dimension of $\mathcal C$ and $T^{-q}(\mathcal C)\cap\mathcal C$ to the difference between the Lebesgue measure of the respective convex hull approximations of decreasing size. In order to that we will use some ideas used by Newhouse, in \cite{N79}, to study invariants of Cantor sets, such as \emph{thickness}, to prove the abundance of wild hyperbolic sets.

Again, let $\C$ denote the ternary Cantor set and $\C_n$ its $n$-th approximation consisting of $2^n$ disjoint intervals of length $3^{-n}$ and let $\CC_n$ denote the collection of intervals whose disjoint union forms $\C_n$. Note that $\C_n$ is a set while $\CC_n$ is a collection of sets.
Consider that $T^{-q}(\CC_n)$ is the collection of all the connected components of $T^{-q}(\C_n)$.
We consider the set $\C\cap T^{-q}(\C)$. Note that $\C_n\cap T^{-q}(\C_n)\downarrow \C\cap T^{-q}(\C)$. We let $\overline A$ denote the closure of $A$, $\mathring A$ its interior and $A^c$ its complement.
Define
\begin{align}
N_{3^{-n}}&=\#\{I\in\CC_n: \mathring I\cap (\C\cap T^{-q}(\C))\neq \emptyset \},\\
N_{3^{-n}}^*&=\#\{I\in\CC_n: \mathring I\cap (\C_n\cap T^{-q}(\C_n))\neq \emptyset \}.
\end{align}
Since  $\C\cap T^{-q}(\C)\subset \C_n\cap T^{-q}(\C_n)$, then $N_{3^{-n}}\leq N_{3^{-n}}^*$. However, one can prove that:
\begin{proposition}
\label{prop:N-relation}
If $n$ is sufficiently large so that $3^{-n}\leq m^{-q}$, we have $N_{3^{-n}}=N_{3^{-n}}^*$.
\end{proposition}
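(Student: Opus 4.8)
The plan is to prove the non-trivial inequality $N^*_{3^{-n}}\leq N_{3^{-n}}$; combined with the inequality $N_{3^{-n}}\leq N^*_{3^{-n}}$ already noted before the statement, this yields equality. So I fix an interval $I\in\CC_n$ that is counted by $N^*_{3^{-n}}$. Since $I\subseteq\C_n$ we have $\mathring I\subseteq\C_n$, so the defining condition collapses to $\mathring I\cap T^{-q}(\C)\neq\emptyset$ at the finite level, i.e. $\mathring I\cap T^{-q}(\C_n)\neq\emptyset$, while the goal is to produce the corresponding \emph{limit} intersection $\mathring I\cap\C\cap T^{-q}(\C)\neq\emptyset$. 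Thus the whole task is to upgrade an intersection of the finite approximations, localised in $\mathring I$, to an intersection of the two limit Cantor-type sets, again localised in $\mathring I$.

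Next I would exploit the scale hypothesis to fix the local geometry. Because $3^{-n}\leq m^{-q}$, the length $|I|=3^{-n}$ is at most one branch-width $m^{-q}$ of $T^q(x)=m^qx\bmod1$, so $\mathring I$ meets at most two adjacent full branches, on each of which $T^q$ is an affine expanding bijection onto $[0,1]$. Assume first that $\mathring I$ lies in a single branch. Transporting by $T^q$ to $K:=T^q(I)$, an interval of length $m^q3^{-n}\leq1$, the set $\C\cap I$ becomes an affine copy $\mathcal{C}'$ of the ternary Cantor set whose convex hull is exactly $K$, and $T^{-q}(\C)\cap I$ becomes $\C\cap K$. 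The two-branch case (when a branch point $j/m^q$ lies in $\mathring I$) I would treat analogously, splitting $I$ at that point and running the argument on the half whose interior meets $T^{-q}(\C_n)$; the point is that $T^{-q}(\C)$ abuts at branch points (adjacent branch copies share the endpoint $j/m^q$), so no large gap is created there.

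The geometric core is then a self-improvement of the hypothesis followed by a thickness argument. First I would show that $\mathring K\cap\C_n\neq\emptyset$ already forces $\mathring K\cap\C\neq\emptyset$: if $\mathring K$ were disjoint from $\C$, then being connected it would lie inside a single gap of $\C$; gaps of level $\leq n$ are absent from $\C_n$ (so $\mathring K$ could not meet $\C_n$), while every gap of level $>n$ has length at most $3^{-n-1}<2\cdot 3^{-n}\leq m^q3^{-n}=|K|$, so no such gap can contain $\mathring K$ — a contradiction. This places a point of $\C$ in the interior of $K=\mathrm{conv}(\mathcal{C}')$, and since $\mathcal{C}'$ spans $K\subseteq[0,1]=\mathrm{conv}(\C)$, it exhibits the two thickness-one Cantor sets $\C$ and $\mathcal{C}'$ as \emph{linked}. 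I would then invoke Newhouse's gap lemma (via the thickness machinery of \cite{N79}) to obtain a common point, arrange it to be interior, and transport it back by $(T^q)^{-1}$ to produce the desired point of $\mathring I\cap\C\cap T^{-q}(\C)$.

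The hard part is precisely this final step. The two relevant Cantor sets both have thickness exactly $1$ (Newhouse thickness is affine-invariant and equals $1$ for the ternary set), so their thicknesses multiply to $1$ — the borderline case in which the gap lemma is not automatic and can fail for generic Cantor sets. Making it work requires the \emph{exact} self-similar (powers-of-three) structure of $\C$: that its bridge-to-gap ratios are equal to $1$ at every scale and that its difference set is a full interval, which is what lets the alternating bridge–gap refinement terminate at a genuine common point rather than stalling. Keeping that point off the boundary of $I$, and verifying the linking condition near the ends of $K$ and in the two-branch configuration, is the delicate bookkeeping I expect to absorb most of the effort.
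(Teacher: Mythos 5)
Your overall strategy is the paper's: reduce the problem to two affine copies of the ternary Cantor set with overlapping convex hulls and settle it with a borderline (thickness-one) gap lemma. Your single-branch reduction is in fact a clean variant — transporting by $T^q$ and using $3^{-n}\leq m^{-q}$ to get $|K|=m^q3^{-n}>3^{-n-1}$, which rules out $\mathring K$ hiding in any gap of $\C$ and thus eliminates the paper's escalation step in that case. But the submission has a genuine hole at its centre: the borderline gap lemma is asserted, not proved. The paper's Lemma~\ref{lem:gap-lemma} is exactly this statement (for two affine copies $f(\C)$, $g(\C)$ with intersecting hulls, either the copies intersect or one hull lies inside a gap of the other), and its proof is the real content: one constructs a sequence of \emph{gap pairs} $\left(G_1^{(i)},G_2^{(i)}\right)$, uses the fact that for $\C$ the bridges adjacent to a level-$m$ gap have \emph{exactly} the gap's length to force an endpoint of one gap into a strictly smaller gap of the other set at each stage, and then derives a contradiction from compactness at an accumulation point. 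You correctly diagnose that thickness $1\cdot 1=1$ is the failure boundary of Newhouse's lemma and that the exact ternary structure must be used, and your phrase ``alternating bridge--gap refinement'' names the right mechanism; but you never execute it, and your appeal to the difference set $\C-\C$ being a full interval is neither what is used nor sufficient. As written, the hardest step of the proposition is missing.

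The second genuine defect is the two-branch case, which fails as described. If the branch point $c=j/m^q$ lies in $\mathring I$ and you split $I$ at $c$, then $\C\cap I_{\pm}$ is \emph{not} an affine copy of $\C$ with convex hull $I_{\pm}$: $c$ sits at an arbitrary position relative to the ternary structure of $\C\cap I$, so the transported set $T^q(\C\cap I_{\pm})$ is not a Cantor copy spanning $K_{\pm}$ and your linking/thickness setup does not apply to it; moreover $|K_{\pm}|=m^q|I_{\pm}|$ can be arbitrarily small, so the inequality $3^{-n-1}<|K|$ that drives your ``self-improvement'' step is lost, and a short $K_{\pm}$ really can sit inside a deep gap of $\C$. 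The paper sidesteps both problems by never splitting $I$: it fixes a single connected component $J\in T^{-q}(\CC_n)$ with $J\cap\mathring I\neq\emptyset$ (such a $J$ automatically lies in one branch, and $J\cap T^{-q}(\C)$ is a genuine affine copy of $\C$ with hull $J$, of length $m^{-q}3^{-n}\leq 3^{-n}=|I|$), applies Lemma~\ref{lem:gap-lemma} to the pair $I\cap\C$, $J\cap T^{-q}(\C)$, and, when $J$ is trapped in a gap of $I\cap\C$, enlarges it to $J_k\in T^{-q}(\CC_{n-k})$ until it escapes; the hypothesis $3^{-n}\leq m^{-q}$ guarantees escape at some $k\leq n$, and since the trapping gap has length at most $3^{-n-1}$, the first escaping $J_k$ still satisfies $|J_k|\leq 3\cdot 3^{-n-1}=|I|$, so neither hull can lie in a gap of the other copy and the lemma yields an intersection point. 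To repair your version you would either adopt this component-plus-escalation device or select a maximal ternary subinterval inside $I_{\pm}$ and redo the scale comparisons. (A minor shared looseness: both you and the paper conclude with a point of $I\cap\C\cap T^{-q}(\C)$ and do not fully argue it can be taken in $\mathring I$; this affects both write-ups equally.)
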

In order to prove the proposition, we need the following result which follows from the thickness property of the Cantor set $\C$. 
\begin{definition}
Let $\Lambda$ be a Cantor set (not necessarily the Cantor set $\C$ but homeomorphic to $\C$). To define thickness, we consider the gaps of $\Lambda$: a gap of $\Lambda$ is a connected component of $\R\setminus \Lambda$; a bounded gap is a bounded connected component of $\R\setminus \Lambda$. Let $U$ be any bounded gap and $u$ be a boundary point of $U$, so $u\in\Lambda$. Let $B$ be a bridge of $\Lambda$ at $u$, i.e. the maximal interval in $\R$ such that
\begin{itemize}
\item $u$ is a boundary point of $B$; 

\item $B$ contains no point of a gap $U'$ whose length $|U'|$ is at least the length of $U$.
\end{itemize}

The thickness of $\Lambda$ at $u$ is defined as $\ro(\Lambda,u)=|B|/|U|$. The thickness of $\Lambda$, denoted by $\ro(\Lambda)$, is the infimum over these $\ro(\Lambda,u)$ for all boundary points $u$ of bounded gaps.
\end{definition}
Since in the construction of the particular ternary Cantor set $\C$, the gaps created at the $n-th$ step have the exact same size as the connected components of $\C_n$, then the thickness of $\C$ is equal to 1. The next lemma resembles the Gap Lemma in \cite{N79,PT93}, which was stated for two Cantor sets, $\Lambda_1,\Lambda_2,$ such that $\rho(\Lambda_1)\cdot\rho(\Lambda_2)>1$, and whose conclusion was that either their intersection is nonempty or one of them is contained inside a gap of the other. Since, here, we need to consider two Cantor sets, both with thickness equal to $1$, we prove the following result which allows in particular to generalise the statement of the Gap Lemma. Note that if the maximal set $\M$ was such that $\rho(\M)>1$ then we could skip this step.
\begin{lemma}
\label{lem:gap-lemma}
Let $f,g:\R\to\R$ be two affine transformations such that $f([0,1])\cap g([0,1])\neq\emptyset$, and let $\Lambda_1=f(\C)$ and $\Lambda_2=g(\C)$. Then, either $\Lambda_1\cap\Lambda_2\neq\emptyset$ or one of them is contained inside a gap of the other (\ie $f([0,1])$ is contained inside a gap of $\Lambda_2$ or $g([0,1])$ is contained inside a gap of $\Lambda_1$).
\end{lemma}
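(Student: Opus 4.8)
The plan is to prove the contrapositive: assuming $\Lambda_1\cap\Lambda_2=\emptyset$ and that neither set lies in a gap of the other, I will derive a contradiction. This is precisely a \emph{Gap Lemma} in the critical case where the product of thicknesses equals $1$: since thickness is invariant under affine maps and $\ro(\C)=1$, both $\Lambda_1=f(\C)$ and $\Lambda_2=g(\C)$ have thickness $1$, so $\ro(\Lambda_1)\ro(\Lambda_2)=1$ and the classical Gap Lemma of \cite{N79,PT93} (which requires strict inequality) does not apply directly. I would recover the conclusion by exploiting that, under the standing hypothesis of disjointness, boundary points of $\Lambda_1$ and $\Lambda_2$ never coincide, and that this strict separation upgrades the relevant bridge estimate to a \emph{strict} overshoot.

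First I would produce an initial pair of interleaved gaps. Writing $I_1=f([0,1])$ and $I_2=g([0,1])$ for the convex hulls, the hypothesis gives $I_1\cap I_2\neq\emptyset$. Using that $\Lambda_1$ is not contained in a gap of $\Lambda_2$ (together with disjointness, which forces the endpoints $\partial I_2\subset\Lambda_2$ to avoid $\Lambda_1$), a short case analysis shows that $\Lambda_1$ must contain a point $p$ lying in a bounded gap $V$ of $\Lambda_2$. Since $\Lambda_1\not\subset V$, the set $\Lambda_1$ also has points outside the closure of $V$, and the gap $U$ of $\Lambda_1$ that contains the far endpoint of $V$ then interleaves with $V$; here the point $p\in\Lambda_1\cap V$ is what rules out $V\subset U$ and thereby forces genuine interleaving.

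The heart of the argument is a single ping-pong step. Given an interleaved pair $U=(a,b)$, a gap of $\Lambda_1$, and $V=(c,d)$, a gap of $\Lambda_2$, which after relabelling the endpoints we may take to satisfy $a<c<b<d$, suppose the larger gap is $U$. Then the endpoint $b\in\Lambda_1$ lies strictly inside $V$, and the bridge $B=[b,e]$ of $\Lambda_1$ at $b$ pointing away from $U$ satisfies $|B|\ge\ro(\Lambda_1)\,|U|=|U|\ge|V|>d-b$, so $e>d$; the strict inequality comes precisely from $b>c$, i.e. from disjointness, and this is exactly what makes the thickness-$1$ case work. Hence $d$ lies in the interior of $B$. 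As $d\in\Lambda_2$, either $d\in\Lambda_1$ (an immediate contradiction with disjointness), or $d$ sits in a gap $U'$ of $\Lambda_1$ with $U'\subset(b,e)$ and $|U'|<|U|$; one checks that $U'$ and $V$ again interleave, yielding a new interleaved pair whose gaps are strictly smaller. Iterating, and alternating which set owns the larger gap, produces an infinite sequence of interleaved gaps whose left endpoints increase monotonically and whose lengths strictly decrease infinitely often. Because a Cantor set has only finitely many gaps exceeding any fixed length, the lengths must tend to $0$; the monotone bounded left endpoints then converge to a point $x$ that is simultaneously a limit of endpoints of $\Lambda_1$-gaps and of $\Lambda_2$-gaps, so $x\in\Lambda_1\cap\Lambda_2$, contradicting disjointness.

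The main obstacle is the critical thickness: with $\ro=1$ the standard Gap Lemma is unavailable, and a naive bridge estimate only yields a non-strict overshoot $e\ge d$, which can stall the iteration in the boundary case. The resolution, and the step I would treat most carefully, is the observation that the working hypothesis $\Lambda_1\cap\Lambda_2=\emptyset$ keeps all the relevant boundary points strictly apart ($b\neq c$, and likewise at every later stage), which is just enough to make each overshoot strict and so to guarantee that the nested gaps genuinely shrink to a common point.
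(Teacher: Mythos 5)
Your proposal takes essentially the same route as the paper's proof: argue by contradiction, produce an initial pair of interleaved gaps, run a ping-pong iteration in which disjointness forces each overshoot to be \emph{strict} (which is exactly what rescues the critical thickness-one case), and conclude by driving gap lengths to zero to extract a common point of $\Lambda_1$ and $\Lambda_2$. The only real packaging difference is that you invoke the abstract bridge estimate $|B|\ge\ro(\Lambda_1)\,|U|=|U|$ (thickness being affine-invariant and $\ro(\C)=1$), whereas the paper works concretely with the ternary structure --- at the generation where a gap appears, the adjacent intervals of $f(\CC_m)$, $g(\CC_p)$ have exactly the gap's length --- and shows by a short additive argument that at least one endpoint of one gap must land in the adjacent bridge of the other. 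These are interchangeable for $\C$; your formulation is marginally more general. Your construction of the initial interleaved pair is also more careful than the paper's rather terse existence assertion.

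There is, however, one detail that fails as stated: the claims that the iteration proceeds by ``alternating which set owns the larger gap'' and that the ``left endpoints increase monotonically.'' After you replace the larger gap $U$ by the strictly smaller gap $U'\ni d$, it may well happen that $|U'|>|V|$; then the next step again uses a bridge of $\Lambda_1$, now at the \emph{left} endpoint of $U'$, and the newly created gap lies to the \emph{left} of $V$ --- the ping-pong can reverse direction, ownership of the larger gap need not alternate, and the left endpoints need not be monotone. Since your final sentence extracts the common limit point from monotone convergence of the left endpoints, the argument as written has a gap there. The repair is cheap and is in effect what the paper does: at every stage the pair is interleaved, say $a<c<b<d$, so $b\in\Lambda_1$ and $c\in\Lambda_2$ satisfy $0<b-c<\min\bigl(|U^{(i)}|,|V^{(i)}|\bigr)$; each step strictly shrinks one of the two families of gap lengths, so by pigeonhole at least one family is shrunk infinitely often, and since (as you correctly note) a Cantor set has only finitely many gaps exceeding any fixed length, that family's lengths tend to $0$. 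Hence $\dist(\Lambda_1,\Lambda_2)=0$ along a subsequence, and compactness of $\Lambda_1$ and $\Lambda_2$ yields a point of $\Lambda_1\cap\Lambda_2$ --- the desired contradiction, with no monotonicity required.
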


\begin{proof}
Let us assume that neither $\Lambda_1$ nor $\Lambda_2$ are contained inside a gap of the other and assume that $\Lambda_1\cap \Lambda_2=\emptyset$, and derive a contradiction. Let us denote by $G_1$ a gap of $\Lambda_1$ and $G_2$ a gap of $\Lambda_2$. We say that $(G_1,G_2)$ form a \emph{gap pair} if $G_2$ contains exactly one boundary point of $G_1$, which also contains exactly one boundary point of $G_2$. Recall that the boundary points of $G_1$ belong to $\Lambda_1$, as the boundary points of $G_2$ must belong to $\Lambda_2$. Observe that such a gap pair must always exist because $f([0,1])\cap g([0,1])\neq\emptyset$ and otherwise the points of $\Lambda_2$ could never be removed from $f([0,1])$ in order to have that $\Lambda_1\cap\Lambda_2=\emptyset$ (having in mind that neither $f([0,1])$ nor $g([0,1])$ are contained inside a gap o $\Lambda_2$ and $\Lambda_1$, respectively).
Given such a pair we build a sequence of gap pairs $(G_1^{(i)},G_2^{(i)})_{i\in\N}$ such that either $|G_1^{(i+1)}|<|G_1^{(i)}|$ or $|G_2^{(i+1)}|<|G_2^{(i)}|$.
Let $\left(G_1^{(i)},G_2^{(i)}\right)$ be given. Let $m,p\in\N$ be the smallest integers such that $G_1^{(i)},G_2^{(i)}$ appear as gaps of $f(\C_m), g(\C_p)$, respectively. Let $C_1^{\ell}, C_1^{r}$ and $C_2^{\ell}, C_2^{r}$ be the intervals of $f(\CC_m), g(\CC_p)$, respectively, that appear to the left and right of the gaps $G_1^{(i)}$ and $G_2^{(i)}$ and share the respective border points. Note that by construction we have that  $|C_1^{\ell}|=|C_1^{r}|=|G_1^{(i)}|$ and $|G_2^{(i)}|=|C_2^{\ell}|=|C_2^{r}|$. Therefore, we must have that the right endpoint of $G_2^{(i)}$ belongs to $C_1^{r}$ or the left endpoint of $G_1^{(i)}$ belongs to $C_2^{\ell}$, or both. Let us assume \emph{w.l.o.g.} that the first case happens and denote by $T$ the right endpoint of $G_2^{(i)}$. Clearly, $T\in \Lambda_2$  and, since we are assuming that $\Lambda_1\cap\Lambda_2=\emptyset$, we have $T\notin \Lambda_1$. Hence, $T$ must fall into some gap of $\Lambda_1$ inside $C_1^{r}$, which we will denote by $G_1^{(i+1)}$. Since $|C_1^{r}|=|G_1^{(i)}|$, it follows that  $|G_1^{(i+1)}|<|G_1^{(i)}|$. In this case, we set $G_2^{(i+1)}=G_2^{(i)}$ and define $\left(G_1^{(i+1)},G_2^{(i+1)}\right)$ as the new gap pair.
It follows that $\lim_{i\to\infty}|G_1^{(i)}|=0$ or $\lim_{i\to\infty}|G_2^{(i)}|=0$, or both. Assume the first and let $y_i\in G_1^{(i)}$ and $y$ be an accumulation point of $(y_i)_{i\in\N}$. Then $y$ is also an accumulation point of the right endpoints of $G_1^{(i)}$, which belong to $\Lambda_1$, and of the left endpoints of $G_2^{(i)}$, which belong to $\Lambda_2$ and, by definition of gap pair, are all inside $G_1^{(i)}$. But since $\Lambda_1$ and $\Lambda_2$ are compact sets, then $y\in \Lambda_1\cap\Lambda_2$, which is a contradiction.
\end{proof}

\begin{proof}[Proof of Proposition~\ref{prop:N-relation}]
Observe that $T^{-q}(\C_n)$ corresponds to $m^q$ copies of $\C_n$ contracted by the factor $m^{-q}$ and placed side by side on $[0,1]$.
Let $I\in\CC_n$ be an interval such that $\mathring I\cap (\C_n\cap T^{-q}(\C_n))\neq \emptyset$. Let $J\in T^{-q}(\CC_n)$ be an interval of $T^{-q}(\C_n)$ such that $J \cap \mathring I\neq\emptyset$. Note that $\Lambda_1:=I\cap\C$ and $\Lambda_2:=J\cap T^{-q}(\C)$ are copies of the original Cantor set, due to its self similarity property. In fact, if we let $f,g$ to be affine transformations such that $f([0,1])=I$ and $g([0,1])=J$, then $I\cap\C=f(\C)$ and $J\cap T^{-q}(\C)=g(\C)$. Noting that $|J|\leq |I|$, then if $J$ is not contained in any gap of $\Lambda_1$, by Lemma~\ref{lem:gap-lemma}, it follows that $\Lambda_1\cap\Lambda_2\neq\emptyset$ and therefore $I\cap (\C\cap T^{-q}(\C))\neq\emptyset$.
If $J$ is contained in some gap of $\Lambda_1$, we consider $J_1$ to be the interval of $T^{-q}(\CC_{n-1})$ that contains $J$. If $J_1$ is not contained entirely in the same gap in which $J$ is contained, then, since by the structure of the Cantor sets we must still have that $|J_1|\leq |I|$, then by the argument above we are lead to the same conclusion that $I\cap (\C\cap T^{-q}(\C))\neq\emptyset$. If $J_1$ is still contained in the same gap of $\Lambda_1$, we define $J_2$  as the interval of $T^{-q}(\CC_{n-2})$ that contains $J$ and so on, until, eventually, we find some $k\leq n$ so that $J_k$ is not contained entirely in the same gap in which $J$ is contained and $|J_k|\leq |I|$. This is guaranteed because the size of the gap of $\Lambda_1\subset I$ is smaller than $3^{-n}\leq |J_n|$.
\end{proof}
Using the results above, we can proceed with the computation of the Extremal Index, $\theta$.

Let $\tilde{N}_{3^{-n}}$ denote the minimum number of balls of radius $3^{-n}$ to cover the set $ \mathcal{C}\cap T^{-q}(\mathcal{C}).$ By definition of box dimension, we have that
$$\lim_{n\to\infty} \frac{\log \tilde{N}_{3^{-n}}}{\log 3^n} \leq \frac{\log \sqrt{3}}{\log 3}=\frac12 .$$
Hence, there exists an $\varepsilon>0$ such that
$$ \gamma :=\frac12 +\varepsilon<\frac{\log 2}{\log 3}, $$
and, for $n$ sufficiently large,
\begin{equation}
\label{eq:N_gamma}
\tilde{N}_{3^{-n}}<e^{\gamma n\log 3} .
\end{equation}
Observe that $3\tilde{N}_{3^{-n}}$ balls of radius $3^{-n}$ are enough to cover the set $ \mathcal{C}_{n-1}\cap T^{-q}(\mathcal{C}_{n-1}).$ Since  $\mathcal{C}_n\cap T^{-q}(\mathcal{C}_n)\subseteq \mathcal{C}_{n-1} \cap T^{-q}(\mathcal{C}_{n-1})$, we have
$$ \tilde{N}_{3^{-n}}\leq N_{3^{-n}} \leq 3\tilde{N}_{3^{-n}}.$$
Applying Proposition~\ref{prop:N-relation}, we obtain that, for $n$ sufficiently large (in particular, such that $3^{-n}<m^{-q}$),
$$ \tilde{N}_{3^{-n}}\leq N_{3^{-n}}^{*} \leq 3\tilde{N}_{3^{-n}}.$$
This implies that
$$\mu(\mathcal{C}_n\cap T^{-q}(\mathcal{C}_n))=\frac{1}{3^n}N_{3^{-n}}^{*}\leq\frac{3\tilde{N}_{3^{-n}}}{3^n} .$$
Hence, by \ref{eq:N_gamma},
\begin{equation}
\label{eq:estimate-1}
 \mu(\mathcal{C}_n\cap T^{-q}(\mathcal{C}_n))\leq \frac{3e^{\gamma n\log 3}}{3^n}.
 \end{equation}
Recall that the sequence of thresholds $(u_n)_{n\in\N}$ is such that $u_{n}=n$, which means that $U_n=\mathcal{C}_{n}$ and then O'Brien's formula \eqref{eq:OBriens-formula} gives:
\begin{equation}
\label{eq:estimate-OBrien}
\lim_{n\to \infty}1-\theta_n =\lim_{n\to \infty}\frac{\mu(\mathcal{C}_n\setminus \AA_{q_n,n})}{\mu(\mathcal C_n)}.
\end{equation} The set $\AA_{q_n,n}$ depends on a sequence $(q_n)_{n\in\N}$ that we are going to choose in the following way:
\begin{equation}
\label{eq:qn}
q_n:=\left\lceil n\frac{\log3}{\log m}\right\rceil.
\end{equation}
Note that $q_n=o(w_n)$, as required, and, moreover, we have $3^{-n}\leq m^{-q_n}$, for all $n\in\N$, which guarantees that we can apply Proposition~\ref{prop:N-relation} and estimate \eqref{eq:estimate-1} holds, for all $q\leq q_n$.
Then, observing that $\mathcal{C}_n\setminus \AA_{q_n,n} \subseteq \bigcup_{i=1}^{q_n}(\mathcal{C}_n\cap T^{-i}(\mathcal{C}_n))$, we get
\begin{align*}
\mu\left(\mathcal{C}_n\setminus \AA_{q_n,n} \right)&=\mu\left(\bigcup_{q=1}^{q_n}(\mathcal{C}_n\cap T^{-q}(\mathcal{C}_n))\right)\leq \sum_{q=1}^{q_n}\mu(\mathcal{C}_n\cap T^{-q}(\mathcal{C}_n))\leq 3q_n\frac{e^{\gamma n\log 3}}{3^n}.
\end{align*}

Picking up on \eqref{eq:estimate-OBrien}, we finally obtain
\begin{align*}
\lim_{n\to \infty}1-\theta_n &=\lim_{n\to \infty}\frac{\mu(\mathcal{C}_n\setminus \AA_{q_n,n})}{\mu(\mathcal C_n)}  \leq \lim_{n\to \infty}3q_n\frac{\frac{e^{\gamma n\log 3}}{3^n}}{(\frac23)^n}\leq3\lim_{n\to \infty} q_ne^{n(\log 2 - \log 3) }=0.
\end{align*}
Therefore,  $\theta=1$.

\subsection{Verification of conditions $\D_{q_n}(u_n,w_n)$ and $\D'_{q_n}(u_n,w_n)$}
\label{subsec:D-D'}
We recall that the system $([0,1],T,\mu)$ has exponential decay of correlations of BV observables against $L^1$ observables, \ie, for all $\phi\in BV$ and $\psi\in L^1(\mu)$, there exist $C>0$ and $r=\frac1{m}$ such that
$$\cv_\mu(\phi,\psi,n)\leq C r^n.
$$
The $BV$ norm of $\I_{\AA_{q_n,n}}$ is directly related with the number of connected components of $\AA_{q_n,n}$, which we need to control. In order to do that, we start by estimating, for each $q=1,\ldots,q_n$, the number of intervals of $T^{-q}(\C_n^c)$ that intersect a single connected component of $\C_n$, which we will denote by $I$. 

Recall that our choice for $q_n$ made in \eqref{eq:qn} guarantees that $|I|=3^{-n}\leq m^{-q}$, for all $q\leq q_n$, which means that the interval $I$ from $\C_n$ can intersect at most 2 of the $m^q$ copies of $\C_n$ that were contracted to fit on equally sized intervals of length $m^{-q}$ which form the set $T^{-q}(\C_n)$. We also note that $\C_n$ is built in a symmetrical way by choosing $2^n$ intervals of equal length, $3^{-n}$, which alternate with $2^n-1$ holes of different sizes. This means that the number of holes of $\C_n$ is just about its number of connected components. In order to estimate the maximum number of connected components of $T^{-q}(\C_n)$ (with length $m^{-q}3^{-n}$), which intersect the interval $I$, we define $\kappa\in\N$ such that
$$
3^{\kappa-1} m^{-q}\leq 1\leq 3^{\kappa} m^{-q},
$$ 
\ie we take $\kappa=\left\lceil q\frac{\log m}{\log3}\right\rceil$. As represented by Figure \ref{fig:components}, the structure of the Cantor set $\C$ dictates that the maximum number of components of size $m^{-q}3^{-n}$ of one of the $m^q$ copies of $\C_n$ that compose $T^{-q}(\C_n)$ and fit into the interval $I$ is at most $2^\kappa$. 

\begin{figure}[h]
\begin{tikzpicture}[decoration=Cantor set,line width=1mm,scale=2.5]
\draw (0,0) -- (3,0);
\node at (1.5,.2) {$I$};
\draw decorate{ (0,-.3) -- (.5,-.3) };
\node at (-0.5,-.3) {$i=1$};
\draw decorate{ decorate{ (0,-.6) -- (1.5,-.6) }};
\node at (-0.5,-.6) {$i=2$};
\draw decorate{ decorate{ decorate{ (0,-.9) -- (4.5,-.9) }}};
\node at (-0.5,-.9) {$i=3=\kappa$};
\end{tikzpicture}
\caption{The impact of the structure of $\C$ on the maximum number of connected components of $T^{-q}(\C_n)$ that fit into each interval $I$. \label{fig:components}}
\end{figure}
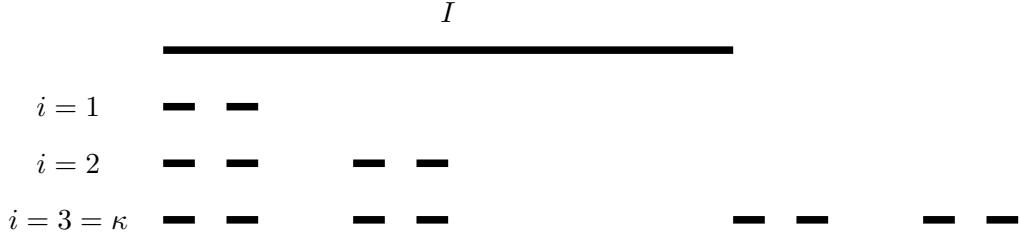

As seen above, the number of holes of one of the $m^q$ copies of $\C_n$  (or connected components of  $T^{-q}(\C_n^c)$) that fit into the interval $I$ is bounded above also by $2^\kappa$.  Since there are at most 2 of the $m^q$ copies of $\C_n$ that form   $T^{-q}(\C_n)$ which intersect $I$, then the maximum number of connected components of $T^{-q}(\C_n^c)$ that intersect $I$ is $2^{\kappa+1}=2^{\left\lceil q\frac{\log m}{\log3}\right\rceil+1}$. 

Observing that the intersection of a collection of $i$ subintervals of $I$ with another collection of $j$ subintervals of $I$ produces at most $i+j$ connected components, then $\C_n$ is formed by $2^n$ intervals like $I$. Having also in mind the choice of $q_n$ in \eqref{eq:qn}, then the number of connected components of $\AA_{q_n,n}=\C_n\cap T^{-1}(\C_n^c)\cap\ldots\cap T^{-q_n}(\C_n^c)$ is bounded above by
\begin{align*}
2^n\sum_{q=1}^{q_n}2^{\left\lceil q\frac{\log m}{\log3}\right\rceil+1}&=2^{n+2}\sum_{q=1}^{q_n}2^{\left\lfloor q\frac{\log m}{\log3}\right\rfloor}\leq 2^{n+2}\sum_{q=1}^{q_n}m^{q\frac{\log2}{\log3}}\leq 2^{n+3}m^{q_n\frac{\log2}{\log3}}\\
&\leq 2^{n+3}m^{(n\frac{\log3}{\log m}+1)\frac{\log2}{\log3}}\leq 8m 4^n.
\end{align*}
This implies that
\begin{equation*}
\Vert \I_{\AA_{q_n,n}}\ \Vert_{BV} \leq 16m \,4^n+1\leq 32m \,4^n
\label{eq:components}
\end{equation*}
Choosing, for example, $t_n=n^2$, then $t_n=o(w_n)$ and
\begin{equation*}
  \lim_{n\to\infty}\left\| \I_{\AA_{q_n,n}}\right\|_{BV}   w_{n} r^{t_n}
  \leq\lim_{n\to\infty}  \left\lfloor\tau \left(\frac32\right)^{n}\right\rfloor 32m\, 4^n \,m^{-n^2}=0
\end{equation*}
and condition $\D_{q_n}(u_n,w_{n})$ holds.

Observe that the choice of $q_n$ implies that for $q\geq q_n>n\frac{\log3}{\log m}$ we have $m^{-q}<3^{-n}$.
Recall that $T^{-q}(\C_n)$ corresponds to $m^q$ copies of $\C_n$ contracted by the factor $m^{-q}$ and placed side by side on $[0,1]$ and, since $\mu(T^{-q}(\C_n))=\mu(\mathcal C_n)=(2/3)^n$, then each such copy has a measure equal to $m^{-q}\mu(\mathcal C_n)=m^{-q}(2/3)^n$. We point out that each of the $2^n$ connected components of $\C_n$ intersects at most $\left\lfloor\frac{3^{-n}}{m^{-q}}\right\rfloor+2$ intervals of size $m^{-q}$. Hence,
\begin{align}
\mu(\mathcal{C}_n\cap T^{-q}(\mathcal{C}_n))&\leq m^{-q}\left(\frac23\right)^n\left(\left\lfloor\frac{3^{-n}}{m^{-q}}\right\rfloor+2\right)2^n\leq m^{-q}\left(\frac23\right)^n\left(\frac{3^{-n}}{m^{-q}}+2\right)2^n\nonumber\\
&\leq \left(\frac23\right)^{2n}+2\left(\frac23\right)^nm^{-q}2^n\leq 3\left(\frac23\right)^{2n}\label{eq:estimate-2}.
\end{align}
We choose $k_n=n$. Note that $k_n\xrightarrow[n\to\infty]{}\infty$ and $k_nt_n=n^3=o(w_n)$. Now, observing that $\AA_{q_n,n}\subset\C_n$, then \eqref{eq:estimate-2} implies that:
\begin{align*}
\,w_{n}\sum_{j=q_n+1}^{\lfloor w_{n}/k_n\rfloor-1}&\mu\left(  \AA_{q_n,n}\cap T^{-j}\left( \AA_{q_n,n}\right)\right)\leq \,w_{n}\sum_{j=q_n+1}^{\lfloor w_{n}/k_n\rfloor-1}\mu\left(  \C_{n}\cap T^{-j}\left( \C_{n}\right)\right)\\
&\leq \,w_{n}\sum_{j=q_n+1}^{\lfloor w_{n}/k_n\rfloor-1} 3\left(\frac23\right)^{2n}\leq 3\frac{w_{n}^2}{k_n}\left(\frac23\right)^{2n}\leq \frac3{k_n}\tau^2\left(\frac32\right)^{2n}\left(\frac23\right)^{2n}=\frac{3\tau^2}{k_n}\xrightarrow[n\to\infty]{}0
\end{align*}
and  therefore $\D'_{q_n}(u_n,w_{n})$ also holds.
Since we have already proved that  $\theta=1$, by Theorem~\ref{th:exists_evl}, we conclude the claim of Theorem~\ref{thm:2x_mod1}, \ie
\begin{equation}
\label{eq:evl_2x}
\lim_{n\to\infty} \mu(M_{w_{n}}\leq n)=\e^{ -\tau},
\end{equation}
when $m\in\N$ is not a power of $3$.

\section{The Extremal Index as a geometric indicator of the compatibility between a fractal set and a given dynamical system}
\label{sec:EI-indicator}

In order to illustrate the viability of using the EI as an indicator between the compatibility of the fractal structure of a set and a certain dynamics, we performed several numerical simulations using different dynamical systems and fractal sets. We began by testing numerically the theoretical results stated in Section~\ref{sec:statement-of-results}. Then, we kept the same maximal set and tested several different uniformly expanding and non-uniformly expanding dynamical systems and even  irrational rotations. Finally, we considered a different maximal set, which consisted on a dynamically defined Cantor set obtained from a quadratic map, and tested it against both linear dynamics (which should be incompatible) and systems compatible with the one that generated the Cantor set.

We remark that in some cases (such as with irrational rotations), the systems are outside the scope of application of the theory considered earlier. In other cases, with some adjustments to the arguments, one could actually check that conditions $\D$ and $\D'$ hold. 

We will use the EI estimator introduced by Hsing in \cite{H93a}. Namely, we will consider:
\begin{equation}
\label{eq:EIestimator}
\hat \theta_n (u,q)=\frac{ \sum_{i=0}^{n-1}\I_{T^{-i}(\AA_q(u))}}{\sum_{i=0}^{n-1}\I_{T^{-i}(U(u))}},
\end{equation}
where the sets $U(u)$ and $\AA_q(u)$ are defined in \eqref{eq:U-A-def}. The parameters $u$ and $q$ are tuning parameters which determine the quality of the estimate. In principle, one should consider high values of $u$ so that the tail behaviour is captured by the quantities in $\hat \theta_n (u,q)$, but if $u$ is too high there may not be enough information to estimate accurately the EI. Since when $\D'_{q*}(u_n,w_n)$ holds for some fixed $q*\in\N$ then $\D'_{q}(u_n,w_n)$ holds for all $q>q*$, then the parameter $q$ should not affect as much the quality of the estimator. We will test several values of $u$ and a few for $q$ and then we analyse the data to identify regions of stability of the estimator.

\subsection{The ternary Cantor set and linear dynamics}
\label{susec:simulations-1}
We illustrate numerically the existence of an EI equal to $1$ when $m$ is not a power of $3$, as stated in Theorem~\ref{thm:2x_mod1}, and the validity of the formula for the EI stated in Theorem~\ref{thm:3kx_mod1}, when $m=3^k$ for some $k\in\N$, in which case the Cantor set is invariant by the dynamics. 

The numerical simulations performed consisted in randomly generating $\ell$ uniformly distributed points on $[0,1]$ (recall that Lebesgue measure is invariant for the linear maps considered in Theorems~\ref{thm:2x_mod1} and \ref{thm:3kx_mod1}) and, for each one, compute the first $n$ iterates of the respective orbit and evaluate the observable function $\varphi$, defined in \eqref{eq:Cantor-ladder}, along each orbit. Then, for each the $\ell$ time series obtained as described above, we compute $\hat \theta_n (u,q)$,  for several values of $u$ and $q$, which are adequately chosen for the range of $u$ values.

\begin{figure}
\includegraphics[width=7cm]{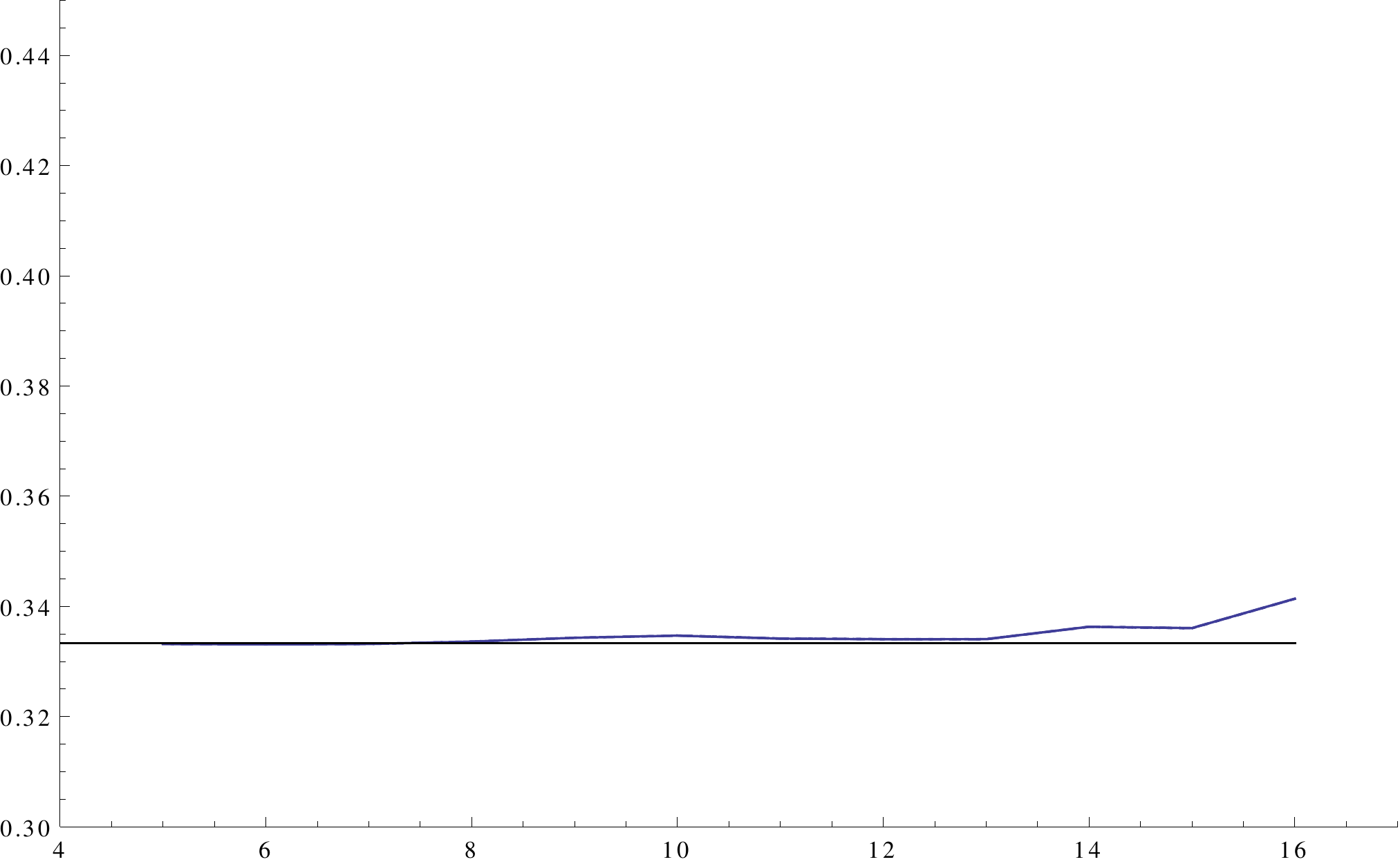} \includegraphics[width=7cm]{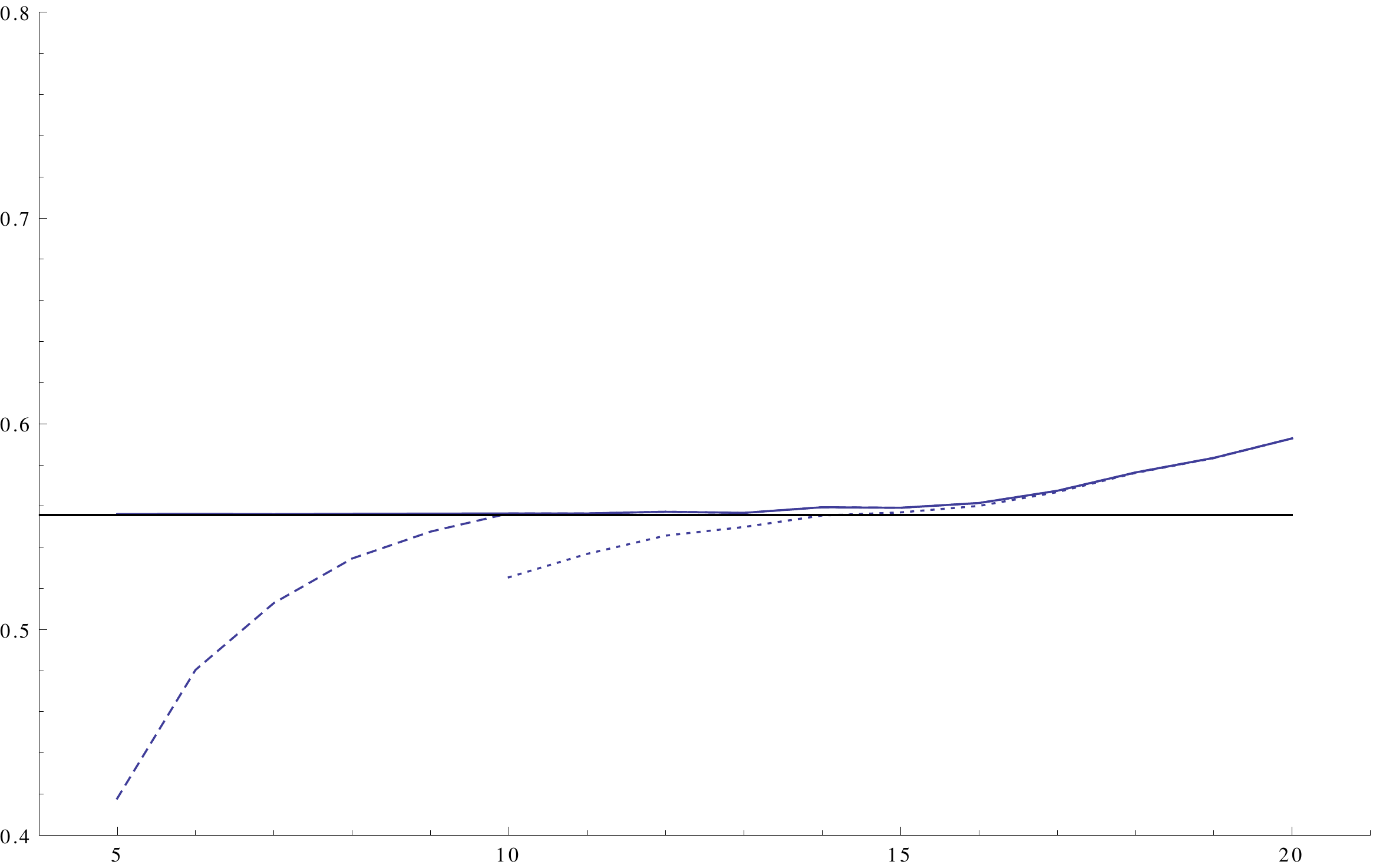} 
\caption{On the $y$-axis, mean values of $\hat \theta_n (u,q)$ for each $u$ of the $x$-axis, with $n=50.000$ and $\ell=500$.  The full line corresponds to $q=1$, the dashed line to $q=5$ and the dotted line to $q=10$. The black horizontal line represents the exact value of the EI given by Theorem~\ref{thm:3kx_mod1}. On the left, we have $T(x)=3x \mod 1$ and, on the right, $T(x)=9x \mod 1$.}
\label{fig:3xmod1} 
\end{figure}

We observe an excellent agreement between  the theoretical value of $\theta$ and the observed estimates of $\hat \theta_n (u,q)$, in the regions of stability which correspond to the values of $u$ in $[5,15]$, in the case $m=3$, and 
$[10,15]$, in the case $m=9$.

In the case $m=5$, there is also an excellent agreement between the theoretical value $\theta=1$ and the observed estimates of $\hat \theta_n (u,q)$, in the regions of stability which correspond to higher values of $u$, namely, for $u\in[15,28]$. We note that the agreement improves considerably when we increase the number iterations, $n$, which allows to have more information on the tails.

\begin{figure}[h]
\includegraphics[width=7cm]{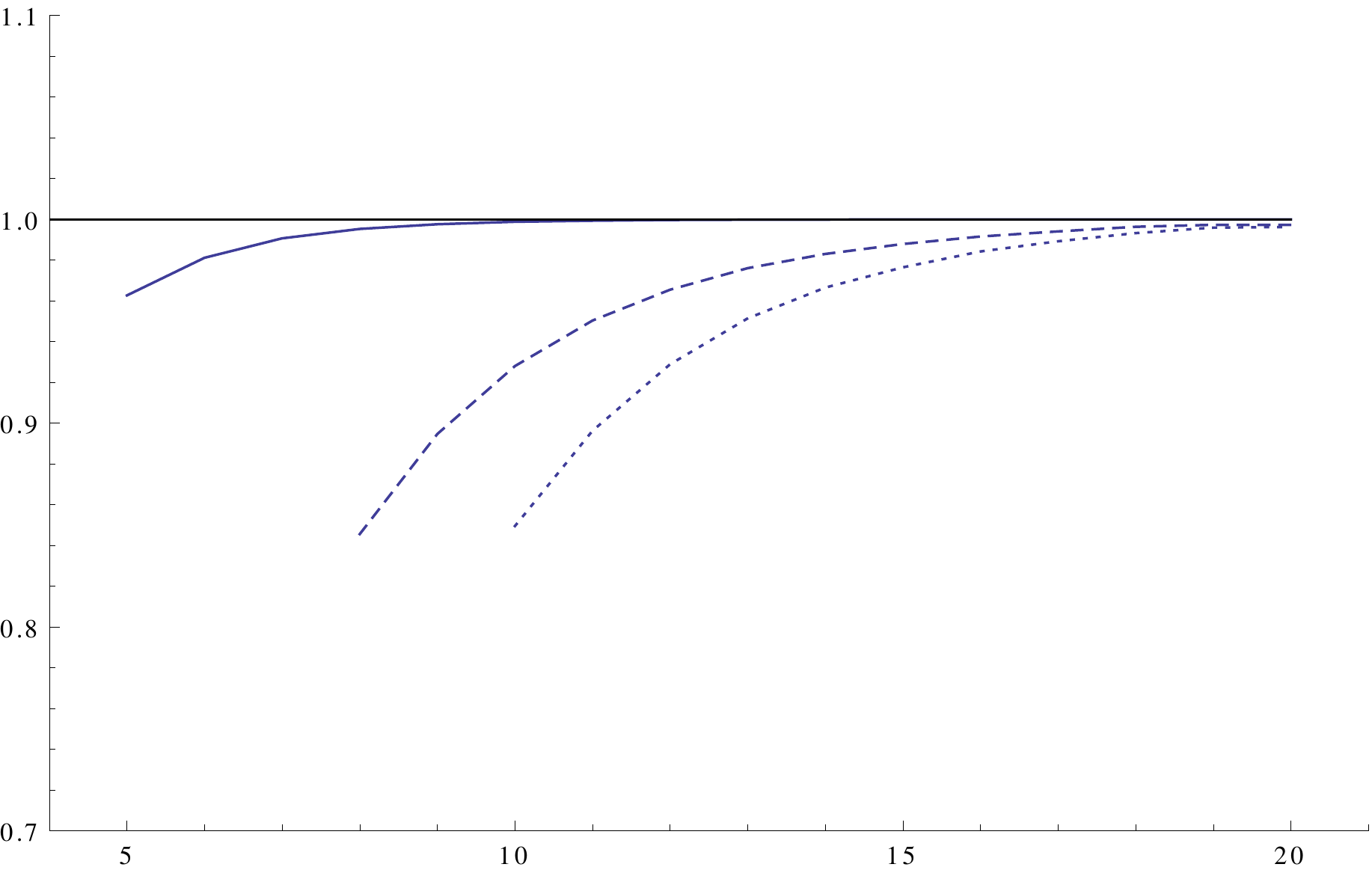} \includegraphics[width=7cm]{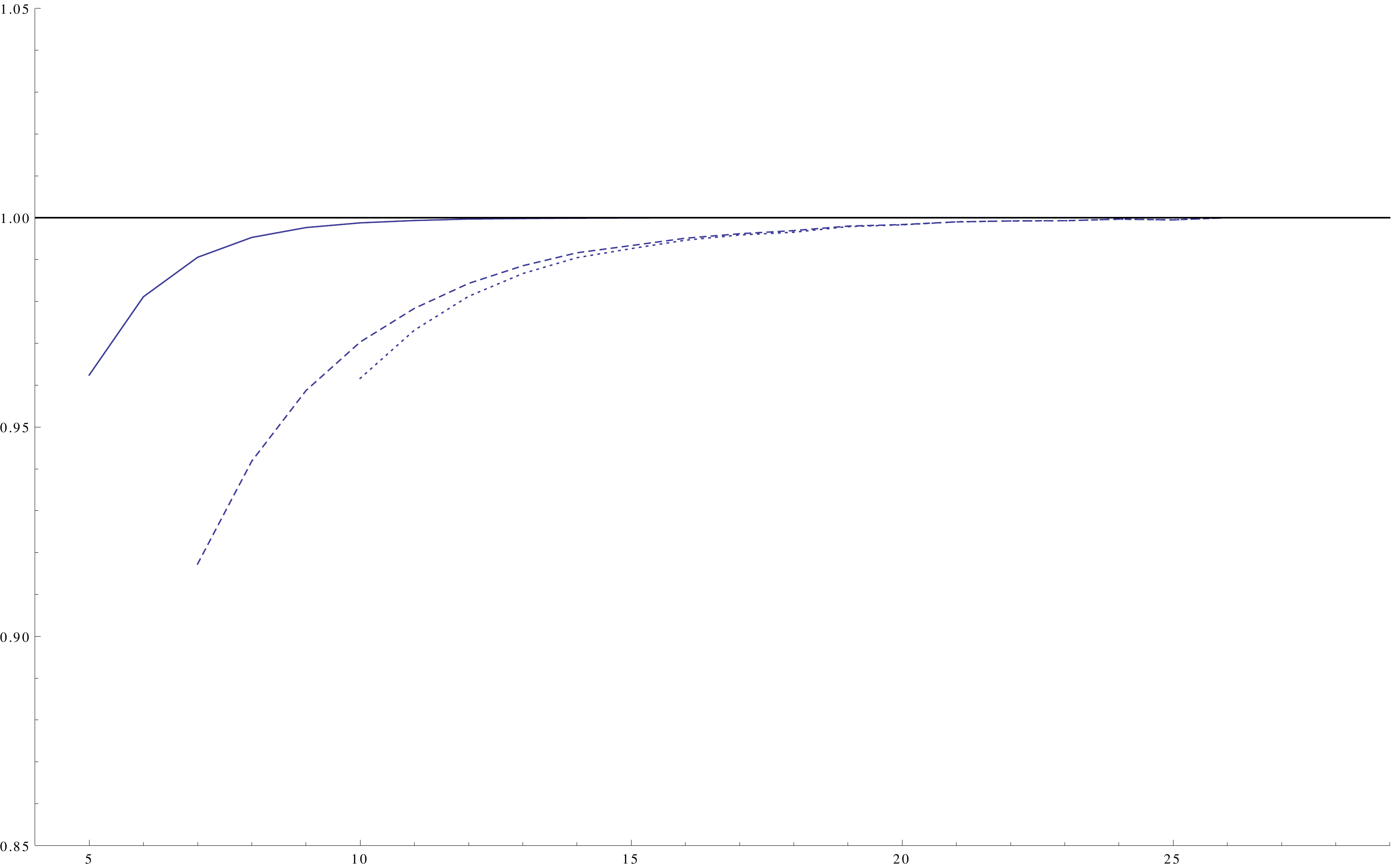} 
\caption{On the $y$-axis, mean values of $\hat \theta_n (u,q)$ for each $5\leq u\leq 20$ of the $x$-axis, with $n=50.000$. The full line corresponds to $q=1$, the dashed line to $q=5$ and the dotted line to $q=10$. The black horizontal line represents the exact value of the EI given by Theorem~\ref{thm:2x_mod1}. The dynamics is $T(x)=5x \mod 1$. On the left, $n=50.000$ and $\ell=500$. On the right, $n=500.000$ and $\ell=100$. }
\label{fig:5xmod1} 
\end{figure}
The simulations results show an excellent performance of the EI in order to distinguish between the compatibility and incompatibility of the dynamics with the structure of the Cantor set.

In the previous cases, either $T(\C)=\C$ or $T(\C)\cap\C$ is negligible. We consider a case where we have a relevant intersection $T(\C)\cap \C$, although $T(\C)\neq\C$. The idea is to consider a map that maps the left side component of $\C$ onto $\C$, while the right side component is sent to a set with a negligible intersection with $\C$. Let $T:[0,1]\to[0,1]$ be the linear map whose first branch coincides with the first branch of $3x\mod1$ and the others send each of the $5$ equally lengthed subintervals of $[2/3,1]$ onto $[0,1]$. See Figure~\ref{fig:mixedlinear-map}.
\begin{figure}[h]
\includegraphics[width=5cm]{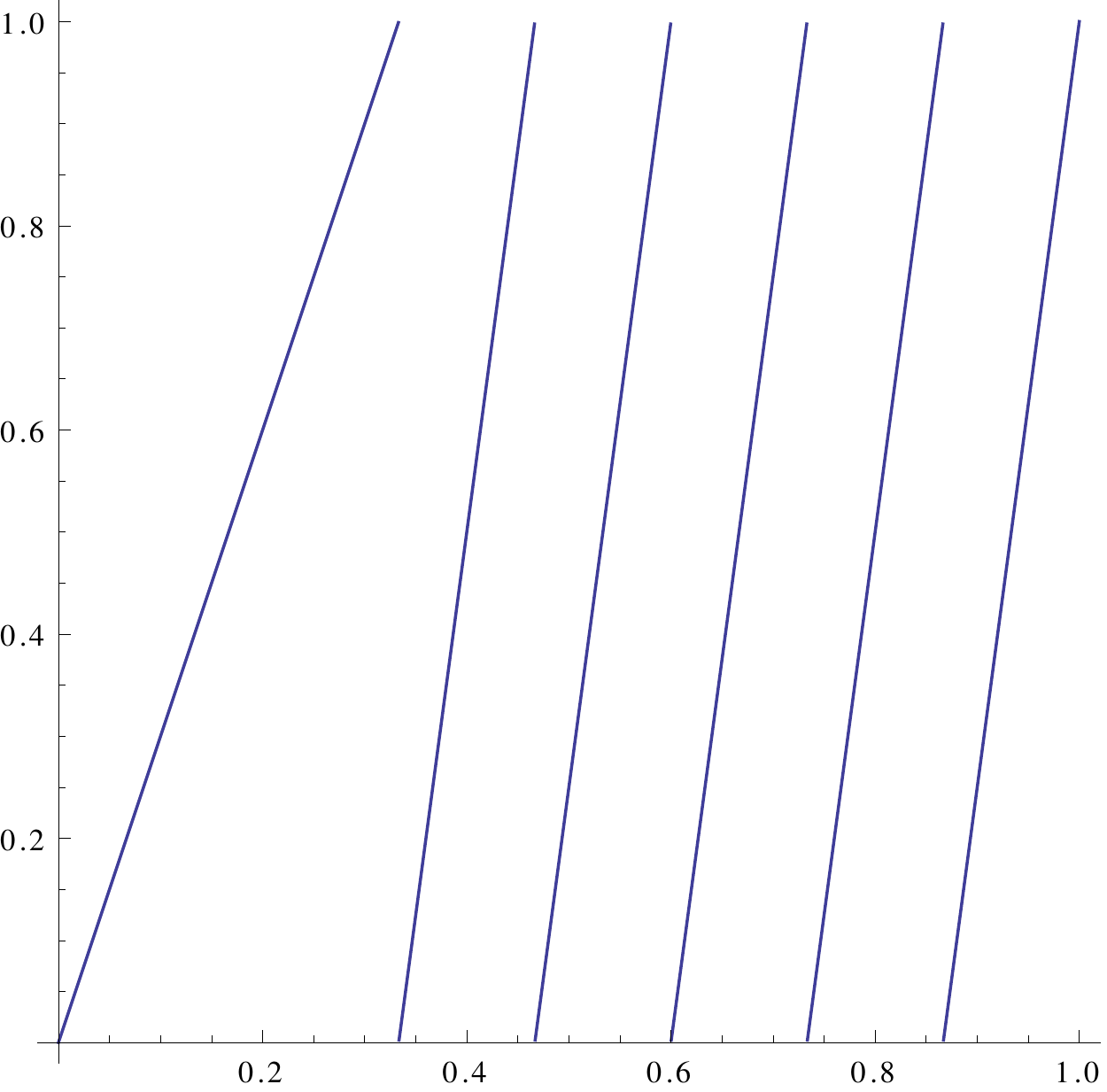}
\caption{Mixed linear map}
\label{fig:mixedlinear-map}
\end{figure}

Although this map was not considered in the previous sections, it is easy to adjust the arguments to show that an EVL applies with an EI, which is the mean between $1/3$ (the contribution from the left side) and $1$ (the contribution from the right side). Namely, using the estimates in Section~\ref{subsec:dimension-EI}, one can show that $\lim_{n\to\infty}\frac{\mu(\AA_{q_n,n}\cap[2/3,1])}{\mu(\C_n\cap[2/3,1])}=1$ and, as in Section~\ref{se:3x_mod1}, one can show that $\AA_{q_n,n}\cap[0,1/3]=(\C_n\setminus \C_{n+1})\cap[0,1/3]$, which imply:
\begin{equation}
\label{eq:EI-Mixed-linear}
\theta=\lim_{n\to\infty}\frac{\mu(\AA_{q_n,n})}{\mu(U_n)}=\lim_{n\to\infty}\frac{\mu((\C_n\setminus \C_{n+1})\cap[0,1/3])+\mu(\C_n\cap[2/3,1])}{\mu(\C_n)}=\frac12\cdot\frac13+\frac12\cdot 1=\frac23.
\end{equation}

\begin{figure}
\includegraphics[width=7cm]{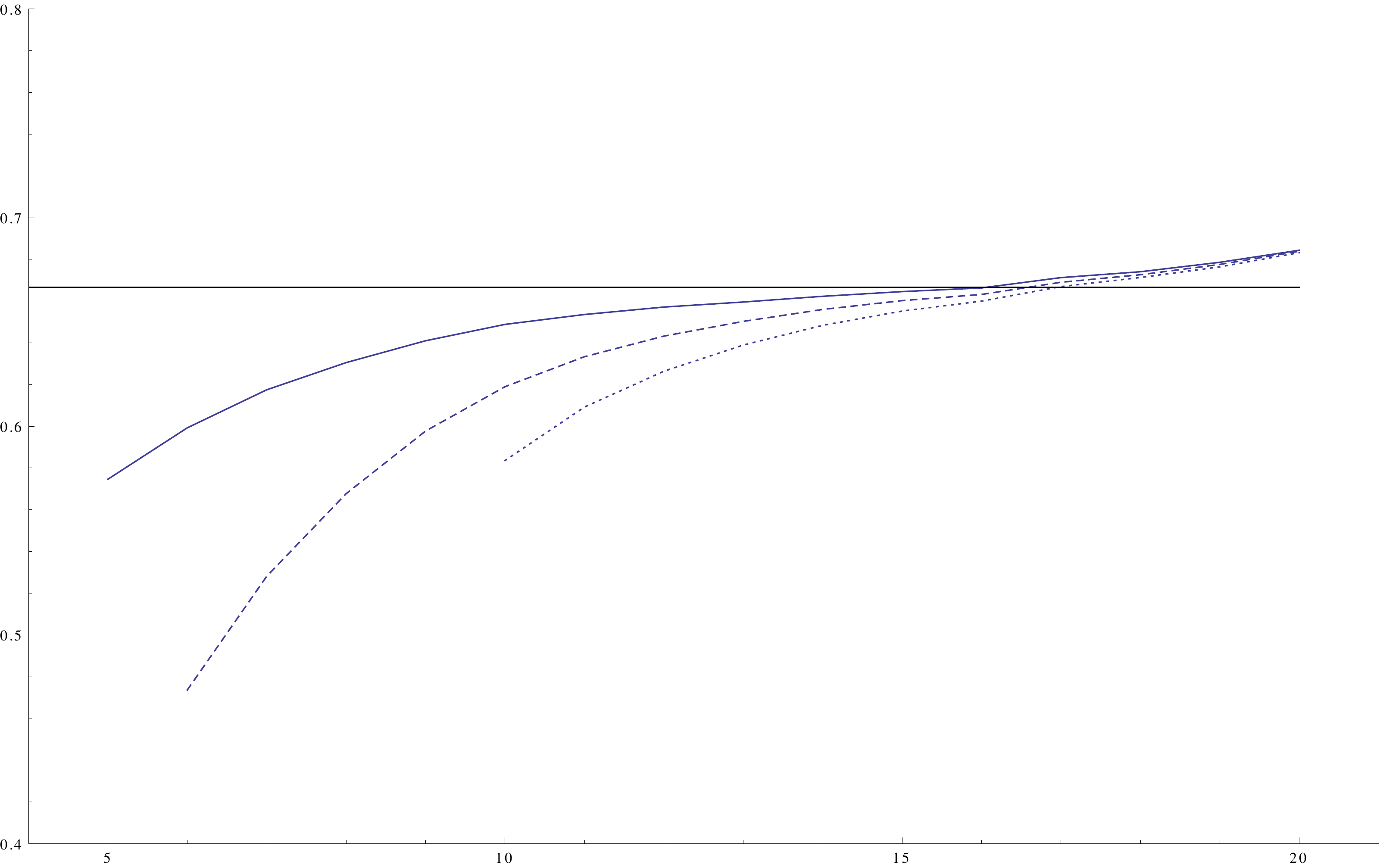} \includegraphics[width=7cm]{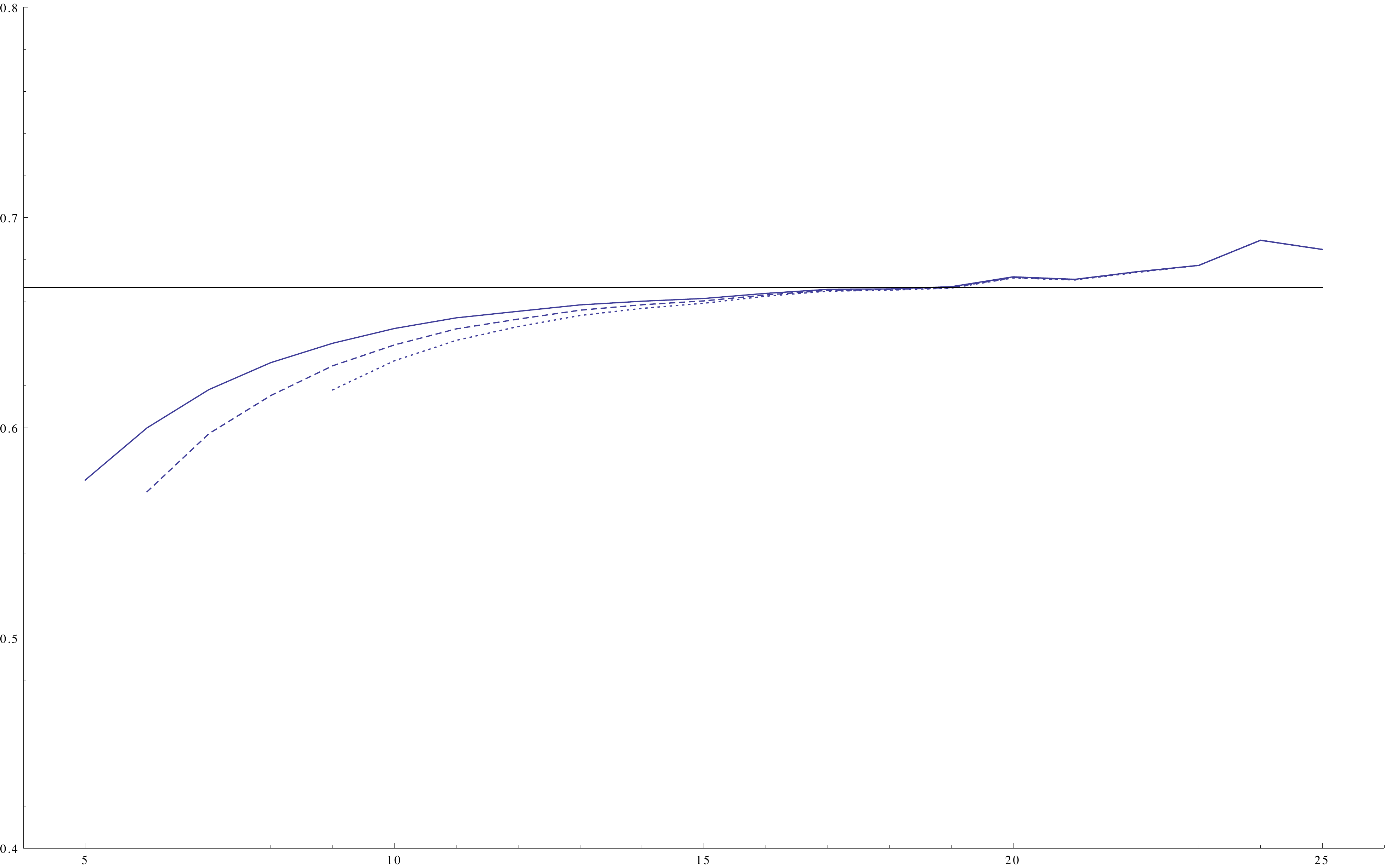} 
\caption{On the $y$-axis, mean values of $\hat \theta_n (u,q)$ for each $ u$ of the $x$-axis. The full line corresponds to $q=1$, the dashed line to $q=5$ and the dotted line to $q=10$. The black horizontal line represents the exact value of the EI given in \eqref{eq:EI-Mixed-linear}. The dynamics is described in Figure~\ref{fig:mixedlinear-map}. On the left, $n=50.000$ and $\ell=500$. On the right, $n=500.000$ and $\ell=100$. }
\label{fig:mixedlinear} 
\end{figure}

As it can be seen in Figure~\ref{fig:mixedlinear}, the numerical estimates for the EI point to the theoretical value $\theta=2/3$ and the performance of the EI estimator improves when $n$ is increased, as expected.

\subsection{The ternary Cantor set, nonlinear dynamics and irrational rotations}

We considered two different nonlinear dynamics and an ergodic rotation. The first is a uniformly expanding map resemblant to the doubling map but in which the branches are convex curves. Namely, we let
\begin{align}
\label{eq:nonlinear1}
T\colon& [0,1]\longrightarrow[0,1]\nonumber\\
&\phantomarrow{[0,1]}{x} 
\left\{\begin{array}{cc}
 \frac{4}{3} x (x+1) & 0\leq x<\frac{1}{2} \\
 \frac{4}{3} \left(x-\frac{1}{2}\right) \left(x+\frac{1}{2}\right) & \frac{1}{2}\leq x\leq 1 \\
\end{array}\right.
\end{align}
This map does not seem to have any compatibility with the ternary Cantor set and in fact the simulation results illustrate an EI estimate equal to 1, which is observed for high values of $u\approx 20$. (See top left panel in Figure~\ref{fig:nonlinear}). We note that, for this particular map, some adjustments to the arguments presented in \ref{subsec:D-D'} would allow to check conditions $\D(u_n,w_n)$ and $\D'(u_n,w_n)$. However, the box dimension estimates used in Section~\ref{subsec:box-dimension-estimates} cannot be easily adapted and therefore we cannot state that the EI is indeed $1$, despite the numerical evidence. 

\begin{figure}
\includegraphics[width=7cm]{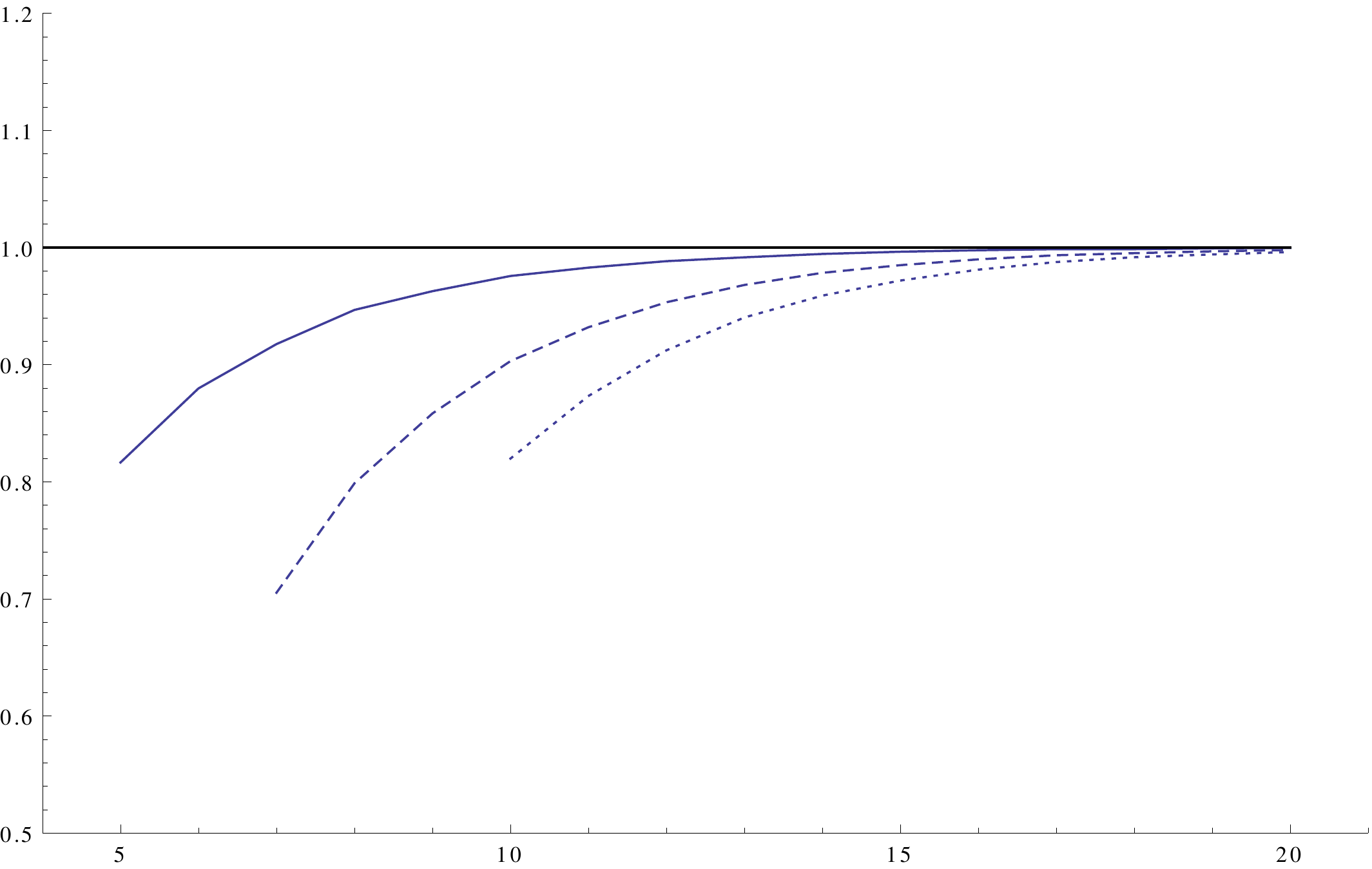} \includegraphics[width=7cm]{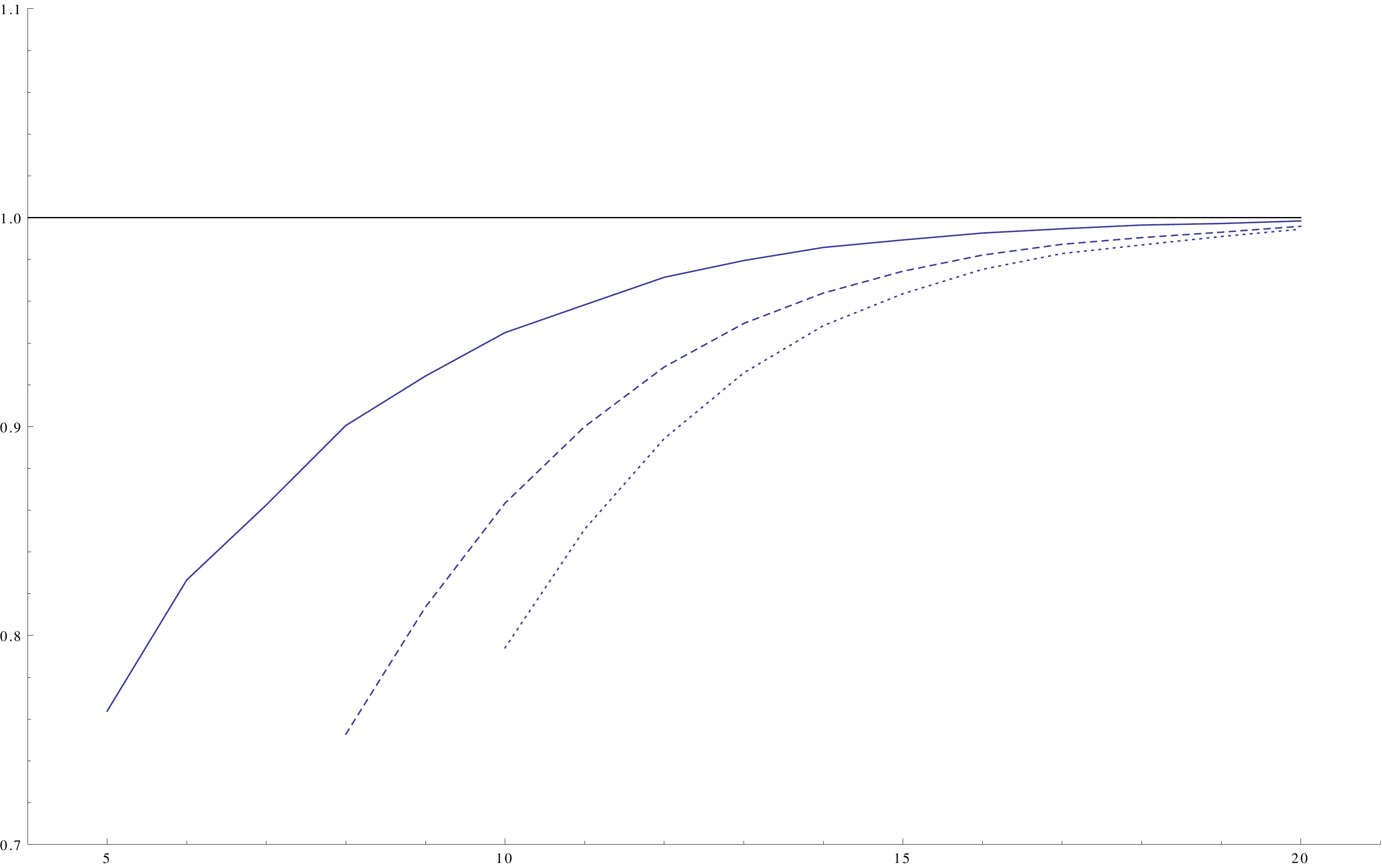} \\
\includegraphics[width=7cm]{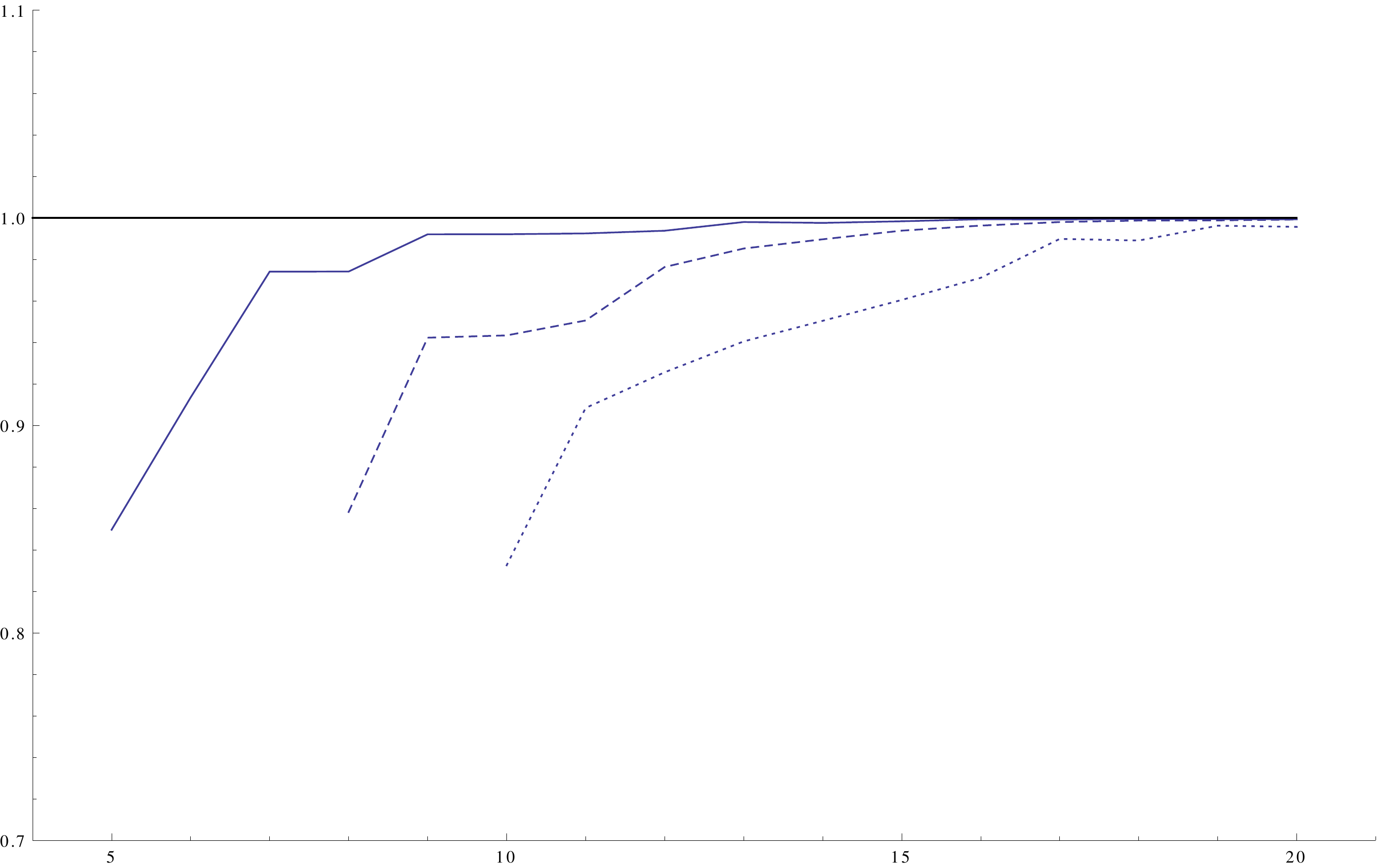}
\caption{On the $y$-axis, mean values of $\hat \theta_n (u,q)$ for each $5\leq u\leq 20$ of the $x$-axis, with $n=50.000$ and $\ell=500$. The full line corresponds to $q=1$, the dashed line to $q=5$ and the dotted line to $q=10$. The black horizontal line represents the expected value for the EI. On the top left $T$ is given by \eqref{eq:nonlinear1}, on the top right $T$ is given by \eqref{eq:nonlinear2} and on the bottom $T(x)=x+\pi/3\mod1$. }
\label{fig:nonlinear} 
\end{figure}

Then, we also considered the Gauss map, which is a non-uniformly expanding map, but still with very good mixing properties. %since its second power is a Rychlik map and therefore has exponential decay of correlations of BV observables against $L^1$.
\begin{equation}
\label{eq:nonlinear2}
\begin{array}{cc}
T\colon& [0,1]\longrightarrow[0,1]\\
&\phantomarrow{[0,1]}{x} 
\frac1x-\left\lfloor\frac1x\right\rfloor
\end{array}
\end{equation}
We remark that for this map is not possible to adapt easily the arguments used in \ref{subsec:D-D'} in order to check conditions $\D(u_n,w_n)$ and $\D'(u_n,w_n)$, since it has countably many branches, which makes the estimates for the number of connected components of $\AA_{q_n,n}$, obtained earlier, useless. Nonetheless, the numerical simulations also reveal that, on the region of stability of the estimator (for high values of $u$), one gets an EI equal to 1, which indicates that the dynamics is incompatible with the structure of the ternary Cantor set $\C$ (See top right panel in Figure~\ref{fig:nonlinear}).

Finally, we also considered an irrational rotation $T:[0,1]\to[0,1]$ given by $T(x)=x+\frac\pi3\mod1$, as in \cite{MP16}, and, in coherence with the numerical simulations performed there, we also obtain a numerical evidence that the EI is $1$. We remark that irrational rotations are completely outside the scope of application of the theoretical results obtained here, which depend heavily on the exponential decay of correlations of the systems considered.

\subsection{A different Cantor set}

We also considered for a maximal set a dynamically defined Cantor set  as described in Section~\ref{subsec:dynamic-Cantor}. Namely, in this case $\Lambda$ is generated by the quadratic dynamical system $g:\R\to\R$ such that $g(x)=6x(1-x)$, \ie 
$$\Lambda=\{x\in[0,1]\colon\; g^{n}(x)\in[0,1] \;\text{for all $n\in\N$}\}.
$$
In this case we define the observable map
\begin{align}
\label{eq:observable-diff-Cantor}
\varphi\colon& [0,1]\longrightarrow[0,1]\nonumber\\
&\phantomarrow{[0,1]}{x}\left\{ \begin{array}{ll}
    n, & \hbox{ if }n=\inf\{j\in\N\colon\; g^j(x)\notin [0,1]\} \\
    \infty, & \hbox{$x\in\Lambda$}
  \end{array}\right..
\end{align}
We studied numerically the behaviour of two systems. The first one is defined by
\begin{align}
T\colon& [0,1]\longrightarrow[0,1]\nonumber\\
&\phantomarrow{[0,1]}{x}\left\{\begin{array}{cc}
g(x) & 0\leq x<\frac{1}{6} \left(3-\sqrt{3}\right) \\
 \frac{x+\frac{1}{6} \left(\sqrt{3}-3\right)}{\frac{1}{6} \left(\sqrt{3}-3\right)+\frac{1}{6} \left(3+\sqrt{3}\right)} & \frac{1}{6} \left(3-\sqrt{3}\right)\leq x<\frac{1}{6} \left(3+\sqrt{3}\right) \\
 g(x) & \frac{1}{6} \left(3+\sqrt{3}\right)\leq x<1 \\
\end{array}\right.,
\label{eq:Diff-Cantor-compatible}
\end{align}
which is compatible with the structure of the Cantor set $\Lambda$ since its left and right branches coincide with the map $g$ that generated $\Lambda$, just as $F$ was compatible with $G$ in Section~\ref{subsec:EVL-general-Cantor-sets}. The second is the linear system $T:[0,1]\to[0,1]$, where $T(x)=5x \mod 1$, which, \emph{a priori}, has no reason to be compatible with the geometric structure of $\Lambda$. Both these systems are full branched Markov maps, which means that have decay of correlations against $L^1$ observables.  We note that if we adapt the the arguments presented in Sections~\ref{se:3x_mod1} and \ref{subsec:D-D'}, one could check that conditions $\D(u_n,w_n)$ and $\D'(u_n,w_n)$ hold for these systems and the observable $\varphi$ defined in \eqref{eq:observable-diff-Cantor}. Hence, these examples fit the theory and we expect the existence of an EVL, but the analytical computation of the EI is much more complicated and cannot be carried as for the ternary Cantor set, in Sections~\ref{se:3x_mod1} and \ref{subsec:dimension-EI}.

As in the usual ternary Cantor set and the linear dynamics, the EI easily detects the compatibility between the dynamics and the fractal structure of $\Lambda$. In the first case, where $T$ is given by \eqref{eq:Diff-Cantor-compatible}, the numerical simulations reveal an EI approximately equal to $0.61$, which is consistent with the expected connection between $g$ and $T$, while in the second case, where $T(x)=5x\mod1$, we obtain an EI equal to 1 (see Figure~\ref{fig:ddcs}).

\begin{figure}
\includegraphics[width=7cm]{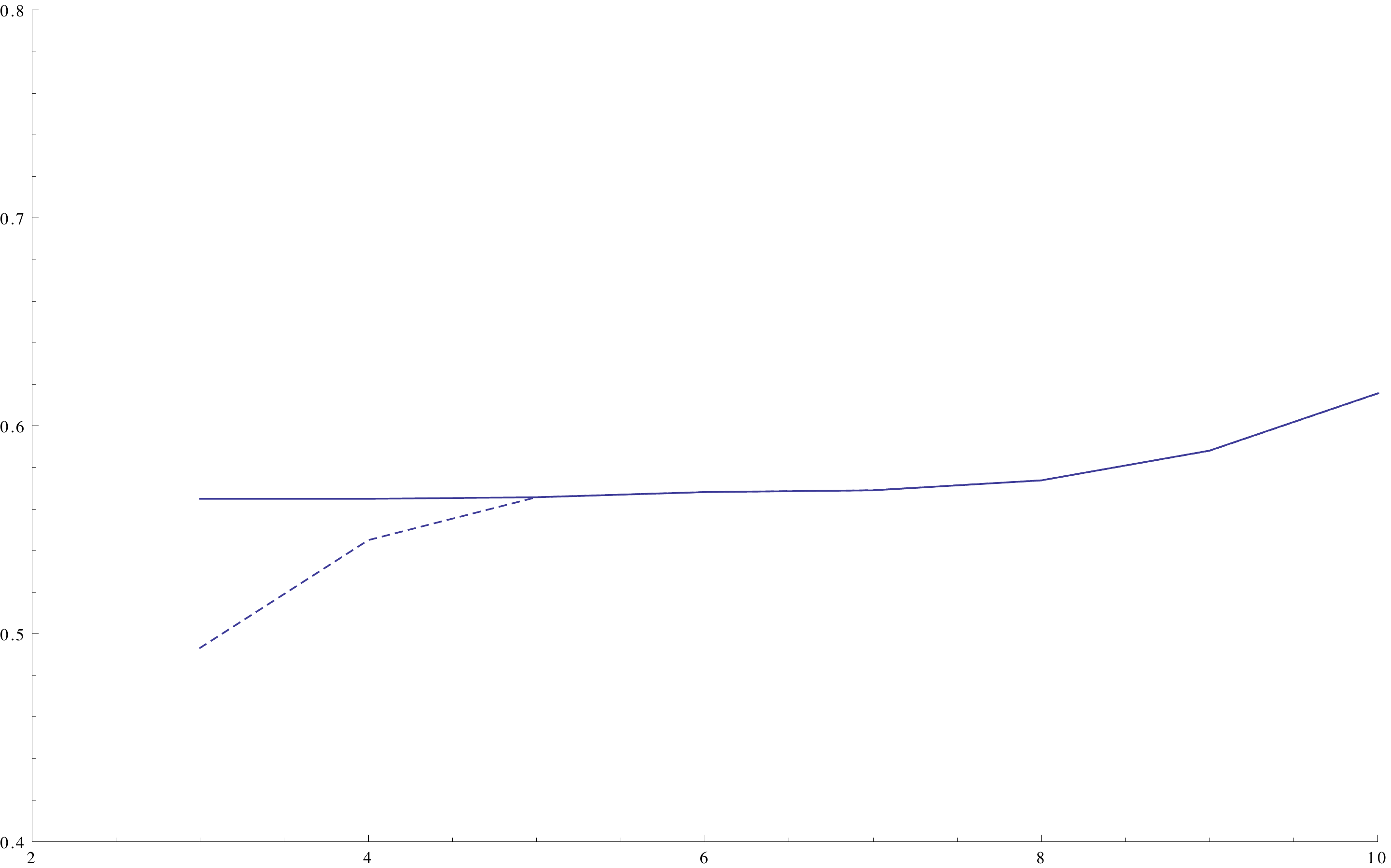} \includegraphics[width=7cm]{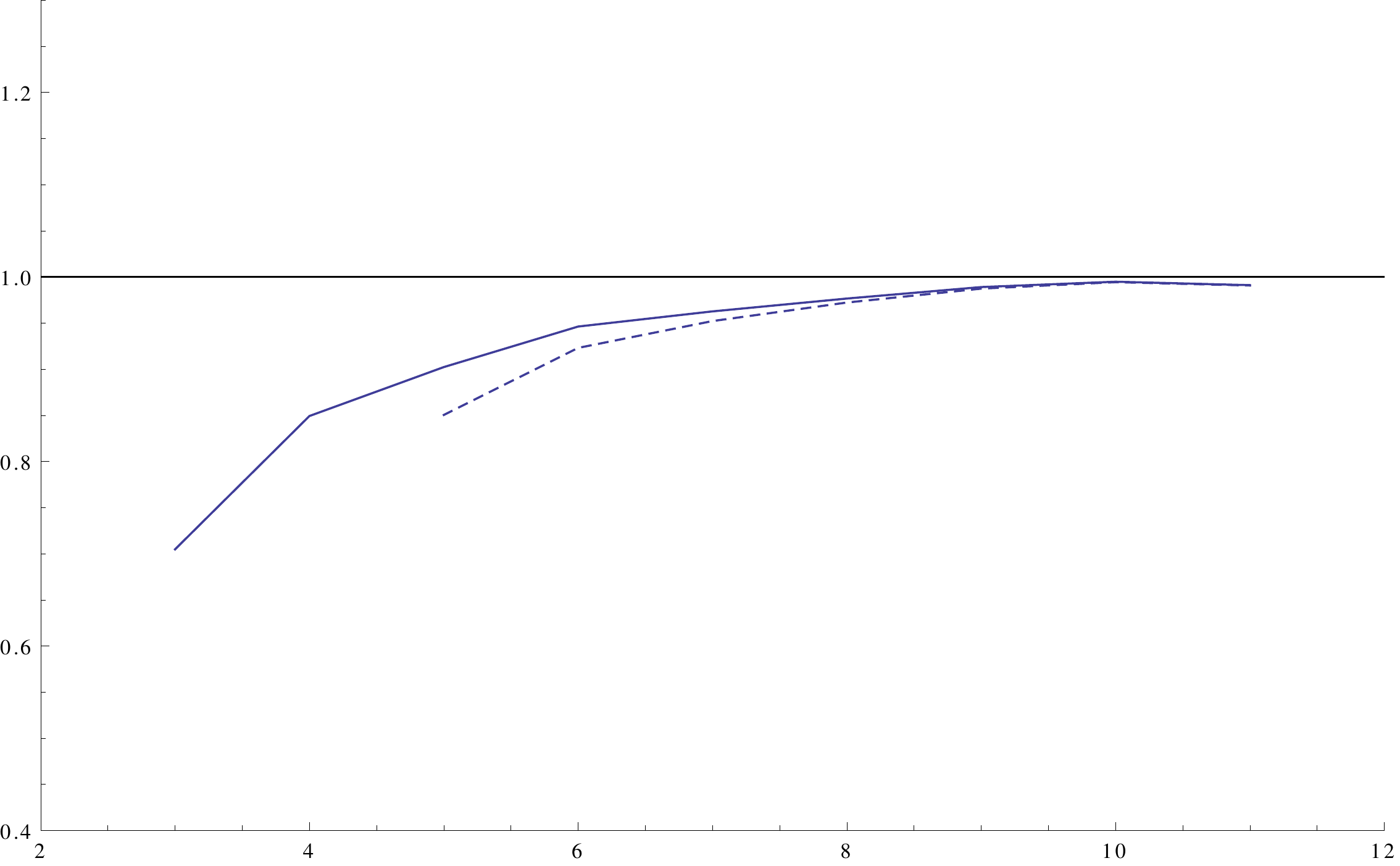} 
\caption{On the $y$-axis, mean values of $\hat \theta_n (u,q)$ for each $5\leq u\leq 20$ of the $x$-axis, with $n=50.000$ and $\ell=500$. The full line corresponds to $q=1$, the dashed line to $q=5$ and the dotted line to $q=10$. On the left $T$ is given by \eqref{eq:Diff-Cantor-compatible} and on the right $T$ is given by $ T(x)=5x\mod1$. }
\label{fig:ddcs} 
\end{figure}

\bibliographystyle{abbrv}

\bibliography{Cantor-biblio.bib}

\begin{thebibliography}{10}

\bibitem{AFLV11}
J.~F. Alves, J.~M. Freitas, S.~Luzzatto, and S.~Vaienti.
\newblock From rates of mixing to recurrence times via large deviations.
\newblock {\em Adv. Math.}, 228(2):1203--1236, 2011.

\bibitem{AFV15}
H.~Ayta\c{c}, J.~M. Freitas, and S.~Vaienti.
\newblock Laws of rare events for deterministic and random dynamical systems.
\newblock {\em Trans. Amer. Math. Soc.}, 367(11):8229--8278, 2015.

\bibitem{AFFR16}
D.~Azevedo, A.~C.~M. Freitas, J.~M. Freitas, and F.~B. Rodrigues.
\newblock Clustering of extreme events created by multiple correlated maxima.
\newblock {\em Phys. D}, 315:33--48, 2016.

\bibitem{AFFR17}
D.~Azevedo, A.~C.~M. Freitas, J.~M. Freitas, and F.~B. Rodrigues.
\newblock Extreme {V}alue {L}aws for {D}ynamical {S}ystems with {C}ountable
  {E}xtremal {S}ets.
\newblock {\em J. Stat. Phys.}, 167(5):1244--1261, 2017.

\bibitem{BP94}
A.~Berman and R.~J. Plemmons.
\newblock {\em Nonnegative matrices in the mathematical sciences}, volume~9 of
  {\em Classics in Applied Mathematics}.
\newblock Society for Industrial and Applied Mathematics (SIAM), Philadelphia,
  PA, 1994.
\newblock Revised reprint of the 1979 original.

\bibitem{BG97}
A.~Boyarsky and P.~G{\^U}ra.
\newblock {\em Laws of chaos}.
\newblock Probability and its Applications. Birkh\"auser Boston Inc., Boston,
  MA, 1997.
\newblock Invariant measures and dynamical systems in one dimension.

\bibitem{CFMV18}
T.~Caby, D.~Faranda, G.~Mantica, S.~Vaienti, and P.~Yiou.
\newblock Generalized dimensions, large deviations and the distribution of rare
  events.
\newblock Preprint arXiv:1812.00036, November 2018.

\bibitem{CCC09}
J.-R. Chazottes, Z.~Coelho, and P.~Collet.
\newblock Poisson processes for subsystems of finite type in symbolic dynamics.
\newblock {\em Stoch. Dyn.}, 9(3):393--422, 2009.

\bibitem{C01}
P.~Collet.
\newblock Statistics of closest return for some non-uniformly hyperbolic
  systems.
\newblock {\em Ergodic Theory Dynam. Systems}, 21(2):401--420, 2001.

\bibitem{DN04}
M.~Das and S.-M. Ngai.
\newblock Graph-directed iterated function systems with overlaps.
\newblock {\em Indiana Univ. Math. J.}, 53(1):109--134, 2004.

\bibitem{DP65}
N.~A. Derzko and A.~M. Pfeffer.
\newblock Bounds for the spectral radius of a matrix.
\newblock {\em Math. Comp.}, 19:62--67, 1965.

\bibitem{E08}
G.~Edgar.
\newblock {\em Measure, topology, and fractal geometry}.
\newblock Undergraduate Texts in Mathematics. Springer, New York, second
  edition, 2008.

\bibitem{F03}
K.~Falconer.
\newblock {\em Fractal geometry}.
\newblock John Wiley \& Sons, Inc., Hoboken, NJ, second edition, 2003.
\newblock Mathematical foundations and applications.

\bibitem{FGGV18}
D.~Faranda, H.~Ghoudi, P.~Guiraud, and S.~Vaienti.
\newblock Extreme value theory for synchronization of coupled map lattices.
\newblock {\em Nonlinearity}, 31(7):3326--3358, 2018.

\bibitem{FP12}
A.~Ferguson and M.~Pollicott.
\newblock Escape rates for gibbs measures.
\newblock {\em Ergod. Theory Dynam. Systems}, 32:961--988, 2012.

\bibitem{FFT10}
A.~C.~M. Freitas, J.~M. Freitas, and M.~Todd.
\newblock Hitting time statistics and extreme value theory.
\newblock {\em Probab. Theory Related Fields}, 147(3-4):675--710, 2010.

\bibitem{FFT11}
A.~C.~M. Freitas, J.~M. Freitas, and M.~Todd.
\newblock Extreme value laws in dynamical systems for non-smooth observations.
\newblock {\em J. Stat. Phys.}, 142(1):108--126, 2011.

\bibitem{FFT12}
A.~C.~M. Freitas, J.~M. Freitas, and M.~Todd.
\newblock The extremal index, hitting time statistics and periodicity.
\newblock {\em Adv. Math.}, 231(5):2626--2665, 2012.

\bibitem{FFTV16}
A.~C.~M. Freitas, J.~M. Freitas, M.~Todd, and S.~Vaienti.
\newblock Rare events for the {M}anneville-{P}omeau map.
\newblock {\em Stochastic Process. Appl.}, 126(11):3463--3479, 2016.

\bibitem{HNT12}
M.~Holland, M.~Nicol, and A.~T{\"o}r{\"o}k.
\newblock Extreme value theory for non-uniformly expanding dynamical systems.
\newblock {\em Trans. Amer. Math. Soc.}, 364(2):661--688, 2012.

\bibitem{H93a}
T.~Hsing.
\newblock Extremal index estimation for a weakly dependent stationary sequence.
\newblock {\em Ann. Statist.}, 21(4):2043--2071, 1993.

\bibitem{K12}
G.~Keller.
\newblock Rare events, exponential hitting times and extremal indices via
  spectral perturbation.
\newblock {\em Dyn. Syst.}, 27(1):11--27, 2012.

\bibitem{KL09}
G.~Keller and C.~Liverani.
\newblock Rare events, escape rates and quasistationarity: some exact formulae.
\newblock {\em J. Stat. Phys.}, 135(3):519--534, 2009.

\bibitem{LFFF16}
V.~Lucarini, D.~Faranda, A.~C.~M. Freitas, J.~M. Freitas, M.~Holland, T.~Kuna,
  M.~Nicol, and S.~Vaienti.
\newblock {\em Extremes and Recurrence in Dynamical Systems}.
\newblock Pure and Applied Mathematics: A Wiley Series of Texts, Monographs and
  Tracts. Wiley, Hoboken, NJ, 2016.

\bibitem{LFTV12}
V.~Lucarini, D.~Faranda, G.~Turchetti, and S.~Vaienti.
\newblock Extreme value theory for singular measures.
\newblock {\em Chaos}, 22(2):--, 2012.

\bibitem{MP16}
G.~Mantica and L.~Perotti.
\newblock Extreme value laws for fractal intensity functions in dynamical
  systems: {M}inkowski analysis.
\newblock {\em J. Phys. A}, 49(37):374001, 21, 2016.

\bibitem{MW88}
R.~D. Mauldin and S.~C. Williams.
\newblock Hausdorff dimension in graph directed constructions.
\newblock {\em Trans. Amer. Math. Soc.}, 309(2):811--829, 1988.

\bibitem{M08}
M.~McClure.
\newblock Intersections of self-similar sets.
\newblock {\em Fractals}, 16(2):187--197, 2008.

\bibitem{MCBF18}
G.~Messori, R.~Caballero, F.~Bouchet, D.~Faranda, R.~Grotjahn, N.~Harnik,
  S.~Jewson, J.~G. Pinto, G.~Rivi{\`e}re, T.~Woollings, and P.~Yiou.
\newblock An interdisciplinary approach to the study of extreme weather events:
  Large-scale atmospheric controls and insights from dynamical systems theory
  and statistical mechanics.
\newblock {\em Bulletin of the American Meteorological Society},
  99(5):ES81--ES85, 2018.

\bibitem{MCF17}
G.~Messori, R.~Caballero, and D.~Faranda.
\newblock A dynamical systems approach to studying midlatitude weather
  extremes.
\newblock {\em Geophysical Research Letters}, 44(7):3346--3354, 2017.

\bibitem{N79}
S.~E. Newhouse.
\newblock The abundance of wild hyperbolic sets and nonsmooth stable sets for
  diffeomorphisms.
\newblock {\em Inst. Hautes \'{E}tudes Sci. Publ. Math.}, (50):101--151, 1979.

\bibitem{O87}
G.~L. O'Brien.
\newblock Extreme values for stationary and {M}arkov sequences.
\newblock {\em Ann. Probab.}, 15(1):281--291, 1987.

\bibitem{PT93}
J.~Palis and F.~Takens.
\newblock {\em Hyperbolicity and sensitive chaotic dynamics at homoclinic
  bifurcations}, volume~35 of {\em Cambridge Studies in Advanced Mathematics}.
\newblock Cambridge University Press, Cambridge, 1993.
\newblock Fractal dimensions and infinitely many attractors.

\bibitem{S09}
B.~Saussol.
\newblock An introduction to quantitative {P}oincar\'e recurrence in dynamical
  systems.
\newblock {\em Rev. Math. Phys.}, 21(8):949--979, 2009.

\bibitem{SKFG16}
E.~W. Saw, D.~Kuzzay, D.~Faranda, A.~Guittonneau, F.~Daviaud,
  C.~Wiertel-Gasquet, V.~Padilla, and B.~Dubrulle.
\newblock Experimental characterization of extreme events of inertial
  dissipation in a turbulent swirling flow.
\newblock {\em Nature Communications}, 7:12466 EP --, 08 2016.

\bibitem{S16}
P.~Shmerkin.
\newblock On furstenberg's intersection conjecture, self-similar measures, and
  the $l^q$ norms of convolutions.
\newblock Preprint arXiv:1609.07802 (to appear in Annals of Matehmatics), 2016.

\end{thebibliography}
\end{document}